\newcommand{\meil}[1]{\mathbf{#1}} 
\newcommand{\latin}[1]{\emph{#1}}
\theoremstyle{plain}
\newtheorem{theorem}{Theorem}
\newtheorem{lemma}{Lemma}
\newtheorem{proposition}[lemma]{Proposition}
\newtheorem{claim}{Claim}
\newcommand{\hypHist}{\hyp{AS}\ensuremath{_{\mathbf{hist}}}}
\newtheorem*{assumptionset}{Assumption set \hypHist}
\theoremstyle{definition}
\theoremstyle{remark}
\newtheorem{remark}{Remark}
\newcommand{\egaldef}{:=} 
\newcommand{\defegal}{=:} 
\newcommand{\flens}{\mapsto} 
\newcommand{\flapp}{\mapsto} 
\newcommand{\telque}{\, \mbox{ s.t. } \,} 
\newcommand{\un}{\mathbf{1}} 
\newcommand{\R}{\mathbb{R}} 
\newcommand{\N}{\mathbb{N}} 
\newcommand{\X}{\mathcal{X}}
\newcommand{\Y}{\mathcal{Y}}
\DeclareMathOperator{\card}{Card} 
\DeclareMathOperator{\argmin}{argmin} 
\newcommand{\paren}[1]{\left( \left. #1 \right. \right)} 
\newcommand{\sparen}[1]{(#1)} 
\newcommand{\croch}[1]{\left[ \left. #1 \right. \right]} 
\newcommand{\scroch}[1]{[#1]} 
\newcommand{\set}[1]{\left\{ \left. #1 \right. \right\}}
\newcommand{\sset}[1]{\{ #1 \}}
\newcommand{\absj}[1]{\left\lvert #1 \right\rvert} 
\providecommand{\norm}[1]{\left \lVert #1 \right\rVert}
\newcommand{\carre}[1]{\left(#1\right)^2}
\renewcommand{\P}{\mathbb{P}}
\newcommand{\Prob}{\mathbb{P}} 
\newcommand{\E}{\mathbb{E}} 
\DeclareMathOperator{\var}{var} 
\newcommand{\sachant}{\, \right| \left. \,} 
\newcommand{\loi}{\mathcal{L}} 
\DeclareMathOperator{\Leb}{Leb} 
\newcommand{\bayes}{s}
\newcommand{\perte}[1]{\ell\paren{\bayes , #1 }}
\newcommand{\ERM}{\widehat{s}}
\newcommand{\M}{\mathcal{M}}
\newcommand{\Mt}{\widetilde{\M}}
\newcommand{\mt}{\widetilde{m}}
\newcommand{\mM}{m \in \M}
\newcommand{\mo}{\ensuremath{m^{\star}}} 
\newcommand{\Did}{\ensuremath{\widehat{D}^{\star}}}
\newcommand{\Kid}{\ensuremath{\widehat{K}^{\star}}}
\newcommand{\mideale}{\ensuremath{\widehat{m}^{\star}}}
\newcommand{\midlin}{\ensuremath{\widehat{m}^{\star}_{\textrm{lin}}}}
\newcommand{\middim}{\ensuremath{\widehat{m}^{\star}_{\textrm{dim}}}}
\newcommand{\mh}{\widehat{m}}
\newcommand{\mhpenVF}{\mh_{\mathrm{penVF}}}
\newcommand{\Mdim}{\ensuremath{\M_{\mathrm{dim}}}} 
\DeclareMathOperator{\pen}{pen}
\DeclareMathOperator{\crit}{crit}
\newcommand{\penVF}{\pen_{\mathrm{VF}}} 
\newcommand{\penHO}{\pen_{\mathrm{HO}}} 
\newcommand{\penid}{\pen_{\mathrm{id}}} 
\newcommand{\penMal}{\pen_{\mathrm{Cp}}} 
\newcommand{\Pnb}[1][]{{P_n^{W #1}}}
\newcommand{\sfloor}[1]{\ensuremath{\lfloor #1 \rfloor}}
\newcommand{\grandO}{\ensuremath{\mathcal{O}}}
\newcommand{\hyp}[1]{\ensuremath{\mathbf{(#1)}}} 
\newcommand{\hypAb}{\hyp{Ab}} 
\newcommand{\hypAn}{\hyp{An}} 
\newcommand{\hypAp}{\hyp{Ap}} 
\newcommand{\hypArXl}{\hyp{Ar^{X}_{\ell}}} 
\newcommand{\hypPpoly}{\hyp{P1}} 
\newcommand{\hypPrich}{\hyp{P2}} 
\newcommand{\crXl}{\ensuremath{c_{\mathrm{r},\ell}^X}}
\newcommand{\cbiasmaj}{C_{\mathrm{b}}^{+}} 
\newcommand{\cbiasmin}{C_{\mathrm{b}}^{-}} 
\newcommand{\betamaj}{\beta_{+}} 
\newcommand{\betamin}{\beta_{-}} 
\newcommand{\aM}{\alpha_{\M}}
\newcommand{\cM}{c_{\M}}
\newcommand{\sigmin}{\sigma_{\min}} 
\newcommand{\delc}{\overline{\delta}} 
\newcommand{\Il}{I_{\lambda}}
\newcommand{\lamm}{\lambda \in \Lambda_m} 
\newcommand{\pl}{p_{\lambda}} 
\newcommand{\sigl}{\sigma_{\lambda}}
\newcommand{\sigla}{\sigma_{\lambda}^r}
\newcommand{\sigld}{\sigma_{\lambda}^d}
\newcommand{\phl}{\widehat{p}_{\lambda}} 
\newcommand{\Cor}{\ensuremath{C_{\mathrm{or}}}}
\newcommand{\epsCor}{\ensuremath{\varepsilon_{\Cor,N}}}
\newcommand{\Cov}{\ensuremath{\mathcal{C}_{\mathrm{ov}}}}
\newcommand{\siga}{\ensuremath{\sigma_{a}}}
\newcommand{\sigb}{\ensuremath{\sigma_{b}}}
\newcommand{\hypThm}{\hyp{HThm}}
\newcommand{\LThm}{L_{\hypThm}}
\newcommand{\hypProA}{\hyp{HPro2}}
\newcommand{\LProA}{L_{\hypProA}}
\newcommand{\hypProB}{\hyp{HPro3}}
\newcommand{\LProB}{L_{\hypProB}}
\newcommand{\KoracleGenerique}{C}
\newcommand{\KThmPenVFOKproba}{K_0}
\newcommand{\KThmDimFAILproba}{K_1}
\newcommand{\KThmDimFAILoracle}{\mathbb{C}_1}
\newcommand{\KProLinOKproba}{K_2}
\newcommand{\KProLinOKoracle}{\mathbb{C}_2}
\newcommand{\KProCpFAILproba}{K_3}
\newcommand{\KProCpFAILdim}{K_4}
\newcommand{\KProCpFAILoracle}{K_5}
\newcommand{\mob}{\ensuremath{m_{\ast}}} 
\newcommand{\Mreg}{\M^{(\mathrm{reg})}}
\newcommand{\Mdeuxpas}{\M^{(\mathrm{reg},1/2)}}
\newcommand{\mMdeuxpas}{m \in \Mdeuxpas}
\newcommand{\Mdeuxpasjoint}[1]{\M^{(\mathrm{reg},#1)}}
\newcommand{\Mdeuxpasjointvar}{\M^{(\mathrm{reg},var)}}
\newcommand{\CadreEx}{X1--005}
\newcommand{\Kh}{\widehat{K}}
\newcommand{\Fh}{\widehat{F}}
\newcommand{\sighsq}{\ensuremath{\widehat{\sigma^2}}}
\newcommand{\Dimset}{\ensuremath{\mathcal{D}}} 
\renewcommand{\Il}{\lambda}
\newcommand{\refapp}{Appendix~\ref{sec.app}}
\newcommand{\refTabFigapp}{Table~\ref{LL.tab.un} and Figures~\ref{fig.res.boxB-div1}--\ref{fig.res.boxB-div2} in \refapp}
\newcommand{\refTabapp}{Table~\ref{LL.tab.un} in \refapp}
\newcommand{\refFigappid}{Figures~\ref{fig.res.boxB-Id1}--\ref{fig.res.boxB-Id2} in \refapp}
\newcommand{\refFigpath}{Figure~\ref{fig.path-density} in \refapp}
\newcommand{\Figpathmain}{Figure~\ref{fig.th1.path-density}}
\newcommand{\refeqmiddim}{\eqref{eq.middim}}
\begin{document}
\title{Choosing a penalty for model selection in heteroscedastic regression}
\author{Sylvain Arlot
\thanks{http://www.di.ens.fr/$\sim$arlot/} \\
CNRS ; Willow Project-Team \\
Laboratoire d'Informatique de l'Ecole Normale Superieure\\
(CNRS/ENS/INRIA UMR 8548)\\
INRIA - 23 avenue d'Italie - CS 81321 \\
75214 PARIS Cedex 13 - France \\
\texttt{sylvain.arlot@ens.fr}}
\date{\today}
\maketitle
\section{Introduction} \label{sec.intro}
Penalization is a classical approach to model selection. 
In short, penalization chooses the model minimizing the sum of the empirical risk (how well the model fits data) and of some measure of complexity of the model (called penalty); see FPE \cite{Aka:1969}, AIC \cite{Aka:1973}, Mallows' $C_p$ or $C_L$ \cite{Mal:1973}. 
A huge amount of literature exists about penalties proportional to the dimension of the model in regression, showing under various assumption sets that dimensionality-based penalties like $C_p$ are asymptotically optimal \cite{Shi:1981,KCLi:1987,Pol_Tsy:1990}, and satisfy non-asymptotic oracle inequalities \cite{Bar_Bir_Mas:1999,Bar:2000,Bar:2002,Bir_Mas:2006}.
Nevertheless, all these results assume data are homoscedastic, that is, the noise-level does not depend on the position in the feature space, an assumption often questionable in practice. 
Furthermore, $C_p$ is empirically known to fail with heteroscedastic data, as showed for instance by simulation studies in \cite{Arl:2009:RP,Arl_Cel:2009:segm}.

In this paper, it is assumed that data {\em can} be heteroscedastic, but not necessary with certainty. 
Several estimators adapting to heteroscedasticity have been built thanks to model selection (see \cite{Gen:2008} and references therein), but always assuming the model collection has a particular form. 
Up to the best of our knowledge, only cross-validation or resampling-based procedures are built for solving a general model selection problem when data are heteroscedastic. This fact was recently confirmed, since resampling and $V$-fold penalties satisfy oracle inequalities for regressogram selection when data are heteroscedastic \cite{Arl:2009:RP,Arl:2008a}. 
Nevertheless, adapting to heteroscedasticity with resampling usually implies a significant increase of the computational complexity.

\medskip

The main goal of the paper is to understand whether the additional computational cost of resampling can be avoided, when, and at which price in terms of statistical performance. 
Let us emphasize that determining from data only whether the noise-level is constant is a difficult question, since variations of the noise can easily be interpretated as variations of the smoothness of the signal, and conversely. 
Therefore, the problem of choosing an appropriate penalty---in particular, between dimensionality-based and resampling-based penalties---must be solved unless homoscedasticity of data is not questionable at all. The answer clearly depends at least on what is known about variations of the noise-level, and on the computational power available.

The framework of the paper is least-squares regression with a random design, see Section~\ref{HP.sec.cadre}. 
We assume the goal of model selection is efficiency, that is, selecting a least-squares estimator with minimal quadratic risk, without assuming the regression function belongs to any of the models. 
Since we deal with a non-asymptotic framework, where the collection of models is allowed to grow with the sample size, a model selection procedure is said to be optimal (or efficient) when it satisfies an oracle inequality with leading constant (asymptotically) one. 
A classical approach to design optimal procedures is the unbiased risk estimation principle, recalled in Section~\ref{sec.urep}.

\medskip

The main results of the paper are stated in Section~\ref{sec.dimbased}. 
First, all dimensionality-based penalties are proved to be suboptimal---that is, the risk of the selected estimator is larger than the risk of the oracle multiplied by a factor $\KThmDimFAILoracle>1$---as soon as data are heteroscedastic, for selecting among regressogram estimators (Theorem~\ref{th.shape}). Note that the restriction to regressograms is merely technical, and we expect a similar result holds for general heteroscedastic model selection problems. 
Compared to the oracle inequality satisfied by resampling-based penalties in the same framework (Theorem~\ref{th.penVF+RP}, recalled in Section~\ref{sec.urep}), Theorem~\ref{th.shape} shows what is lost when using dimensionality-based penalties with heteroscedastic data: at least a constant factor $\KThmDimFAILoracle>1$.

Second, Proposition~\ref{pro.oracle.lin.Klarge} shows that a well-calibrated penalty proportional to the dimension of the models does not loose more than a constant factor $\KProLinOKoracle>\KThmDimFAILoracle$ compared to the oracle. 
Nevertheless, $C_p$ strongly overfits for some heteroscedastic model selection problems, hence loosing a factor tending to infinity with the sample size compared to the oracle (Proposition~\ref{pro.overfit.Mal}). 
Therefore, a proper calibration of dimensionality-based penalties is absolutely required when heteroscedasiticy is suspected.

These theoretical results are completed by a simulation experiment (Section~\ref{sec.simus}), showing a slightly more complex finite-sample behaviour. In particular, when the signal-to-noise ratio is rather small, improving a well-calibrated dimensionality-based penalty requires a significant increase of the computational complexity. 

Finally, from the results of Sections~\ref{sec.dimbased} and~\ref{sec.simus}, Section~\ref{sec.concl} tries to answer the central question of the paper: How to choose the penalty for a given model selection problem, taking into account prior knowledge on the noise-level and the computational power available?

All the proofs are made in Section~\ref{LL.sec.proof}.

\section{Framework} \label{HP.sec.cadre}
In this section, we describe the least-squares regression framework, model selection and the penalization approach. Then, typical examples of collections of models and heteroscedastic data are introduced.

\subsection{Least-squares regression} \label{HP.sec.regression}
Suppose we observe some data $(X_1,Y_1), \ldots (X_n,Y_n) \in \X \times \R$, independent with common distribution $P$, where the feature space $\X$ is typically a compact subset of $\R^k$.
The goal is to predict $Y$ given $X$, where $(X,Y) \sim P$ is a new data point independent of $(X_i,Y_i)_{1 \leq i \leq n}$. 
Denoting by $\bayes$ the regression function, that is $\bayes(x) = \E\croch{Y \sachant X=x}$, we can write 
\begin{equation} 
\label{HP.eq.donnees.reg} 
Y_i = \bayes(X_i) + \sigma(X_i) \varepsilon_i 
\end{equation} 
where $\sigma: \X \mapsto \R$ is the heteroscedastic noise level and $(\varepsilon_i)_{1 \leq i \leq n}$ are i.i.d. centered noise terms; $\varepsilon_i$ may depend on $X_i\,$, but has mean 0 and variance 1 conditionally on $X_i \,$. 

The quality of a predictor $t: \X \mapsto \Y$ is measured by the quadratic prediction loss
\[ \E_{(X,Y) \sim P} \croch{ \gamma(t,(X,Y)) } \defegal P \gamma(t) \qquad \mbox{where} \quad \gamma(t,(x,y)) = \paren{ t(x) - y }^2 \]
is the least-squares contrast. 
The minimizer of $P \gamma(t)$ over the set of all predictors, called Bayes predictor, is the regression function $\bayes$. 
Therefore, the excess loss is defined as  
\[ \perte{t} \egaldef P\gamma\paren{t} - P\gamma\paren{\bayes} =  \E_{(X,Y)\sim P} \carre{t(X)-\bayes(X)} \enspace . \]
Given a particular set of predictors $S_m$ (called a {\em model}), the best predictor over $S_m$ is defined by 
\[ \bayes_m \egaldef \argmin_{t \in S_m} \set{ P \gamma(t) } \enspace . \]
The empirical counterpart of $\bayes_m$ is the well-known {\em empirical risk minimizer}, defined by 
\[ \ERM_m \egaldef \argmin_{t \in S_m} \set{ P_n \gamma(t) } \] (when it exists and is unique), where $P_n = n^{-1} \sum_{i=1}^n \delta_{(X_i,Y_i)}$ is the empirical distribution function; 
$\ERM_m$ is also called least-squares estimator since $\gamma$ is the least-squares contrast.

\subsection{Model selection, penalization} \label{HP.sec.cadre.mod_selec}
Let us assume that a family of models $(S_m)_{\mM_n}$ is given, hence a family of empirical risk minimizers $(\ERM_m)_{\mM_n}$.
The model selection problem consists in looking for some data-dependent $\mh \in \M_n$ such that $\perte{\ERM_{\mh}}$ is as small as possible.
For instance, it would be convenient to prove an oracle inequality of the form
\begin{equation}  \label{eq.oracle}
\perte{\ERM_{\mh}} \leq \KoracleGenerique \inf_{\mM_n} \set{\perte{\ERM_m}} + R_n
\end{equation}
in expectation or with large probability, with leading constant $\KoracleGenerique$ close to 1 and $R_n = \grandO(n^{-1})\,$.

This paper focuses more precisely on model selection procedures by penalization, which can be described as follows.
Let $\pen: \M_n \flapp \R^+$ be some penalty function, possibly data-dependent, and define  \begin{equation} \label{HP.eq.penalization} \mh \in \argmin_{\mM_n} \set{\crit(m)} \qquad \mbox{with} \qquad \crit(m) \egaldef P_n \gamma(\ERM_m) + \pen(m) \enspace . \end{equation} 
The penalty $\pen(m)$ can usually be interpretated as a measure of the size of $S_m\,$.
Since the ideal criterion $\crit(m)$ is the true prediction error $P \gamma\paren{\ERM_m}$, the {\em ideal penalty} is 
\[ \penid(m) \egaldef P \gamma(\ERM_m) - P_n \gamma(\ERM_m) \enspace . \]
This quantity is unknown because it depends on the true distribution $P$. 
A natural idea is to choose $\pen(m)$ as close as possible to $\penid(m)$ for every $\mM_n \,$. This idea leads to the well-known unbiased risk estimation principle, which is properly introduced in Section~\ref{sec.urep.principe}. 
For instance, when each model $S_m$ is a finite dimensional vector space of dimension $D_m$ and the noise-level is constant equal to $\sigma$, Mallows \cite{Mal:1973} proposed the $C_p$ penalty defined by 
\[ \penMal(m) = \frac{2 \sigma^2 D_m}{n} \enspace . \]
Penalties proportional to $D_m$, like $C_p$, are extensively studied in Section~\ref{sec.dimbased}.

\medskip

Among the numerous families of models that can be used, this paper mostly considers ``histogram models'', where for every $\mM_n\,$, $S_m$ is the set of piecewise constant functions w.r.t. some fixed partition $\Lambda_m$ of $\X\,$. 
Note that the least-squares estimator $\ERM_m$ on some histogram model $S_m$ is also called a regressogram.
Then, $S_m$ is a vector space of dimension $D_m = \card(\Lambda_m)$ generated by the family $\paren{\un_{\Il}}_{\lamm} \,$.
Model selection among a family $\paren{S_m}_{\mM_n}$ of histogram models amounts to select a partition of $\X$ among $\sset{\Lambda_m}_{\mM_n}\,$.

Three arguments motivate the choice of histogram models for this theoretical study.
First, better intuitions can be obtained on the role of variations of the noise-level $\sigma(\cdot)$ over $\X\,$---or variations of the smoothness of $\bayes\,$---because an histogram models is generated by a localized basis $\paren{\un_{\Il}}_{\lamm} \,$.
Second, histograms have good approximation properties when the regression function $\bayes$ is $\alpha$-H\"olderian with $\alpha\in(0,1] \,$.
Third, all important quantities for understanding the model selection problem can be precisely controlled and compared, see \cite{Arl:2008a}.

\subsection{Examples of histogram model collections} \label{sec.cadre.collec}
Let us assume in this section for simplicity that $\X=[0,1) \,$. We define in this section several collections of models $\paren{S_m}_{\mM_n}\,$, always assuming that each $S_m$ is the histogram model associated to some partition $\Lambda_m$ of $\X\,$.

The most natural (and simple) collection of histogram models is the collection of {\em regular histograms} $\paren{S_m}_{m\in\Mreg_n}$ defined by
\[ \forall m \in \Mreg_n \egaldef \set{1, \ldots, M_n}, \quad \Lambda_m = \paren{ \left[ \frac{k-1}{m} , \, \frac{k}{m} \right) }_{1 \leq k \leq m} \enspace , \]
where the maximal dimension $M_n \leq n$ usually grows with $n$ slightly slower than $n \,$; reasonable choices are $M_n=\sfloor{n/2} \,$, $M_n = \sfloor{n/(\ln(n))}$ or $M_n = \sfloor{n/(\ln(n))^2} \,$.

Model selection among the collection of regular histograms then amounts to selecting the number $D_m \in \set{1, \ldots, M_n}$ of bins, or equivalently, selecting the bin size $1/D_m$ among $\set{1/M_n, \ldots, 1/2, 1}\,$.
The regular collection $\Mreg_n$ is a good choice when the distribution of $X$ is close to the uniform distribution on $[0,1]\,$, the noise-level $\sigma(\cdot)$ is almost constant on $[0,1]$, and the variations of $\bayes$ (measured by $\absj{\bayes^{\prime}}$) are almost constant over $\X\,$.

\medskip

Since we can seldom be sure these three assumptions are satisfied by real data, considering other collection of histograms models can be useful in general, in particular for adapting to possible heteroscedasticity of data, which is the main topic of the paper.
The simplest case of collection of histogram models with variable bin size is the collection of {\em histograms with two bin sizes and split at $1/2\,$}, $\paren{S_m}_{\mMdeuxpas_n}\,$, defined by
\[ 
\Mdeuxpas_n \egaldef \set{ \paren{D_1,D_2} \telque 1 \leq D_1 , \, D_2 \leq \frac{M_n}{2} } \cup \set{1} \enspace , \]
where $S_1$ is the set of constant functions on $\X$ and for every $m =(D_{m,1}, D_{m,2}) \in \paren{\N\backslash\set{0}}^2 \,$,
\[ \Lambda_m \egaldef 
\set{ \left[ \frac{k-1}{2 D_{m,1}} ; \frac{k}{2 D_{m,1}} \right) }_{1 \leq k \leq D_{m,1}} \cup 
\set{ \left[ \frac{D_{m,2} + k-1}{2 D_{m,2}} ; \frac{D_{m,2} + k}{2 D_{m,2}} \right) }_{1 \leq k \leq D_{m,2}} 
\enspace . \]

Note that using a collection of models such as $\paren{S_m}_{\mMdeuxpas_n}$ does not mean that data are {\em known} to be heteroscedastic; $\paren{S_m}_{\mMdeuxpas_n}$ can also be useful when one only {\em suspects} that at least one quantity among $\sigma$, $\absj{\bayes^{\prime}}$ and the density of $X$ w.r.t. the Lebesgue measure ($\Leb$) significantly varies over $\X\,$.
Distinguishing between the three phenomena precisely is the purpose of model selection: Overfitting occurs when a large noise level is interpretated as large variations of $\bayes$ with low noise. 
The interest of using $\Mdeuxpas_n$ is illustrated by Figure~\ref{fig.interet.Mdeuxpas}, where two data samples and the corresponding oracle estimators are plotted (left: heteroscedastic with $\bayes^{\prime}$ constant; right: homoscedastic with $\bayes^{\prime}$ variable).

\begin{figure}
\begin{minipage}[b]{.48\linewidth}
\includegraphics[width=\textwidth]{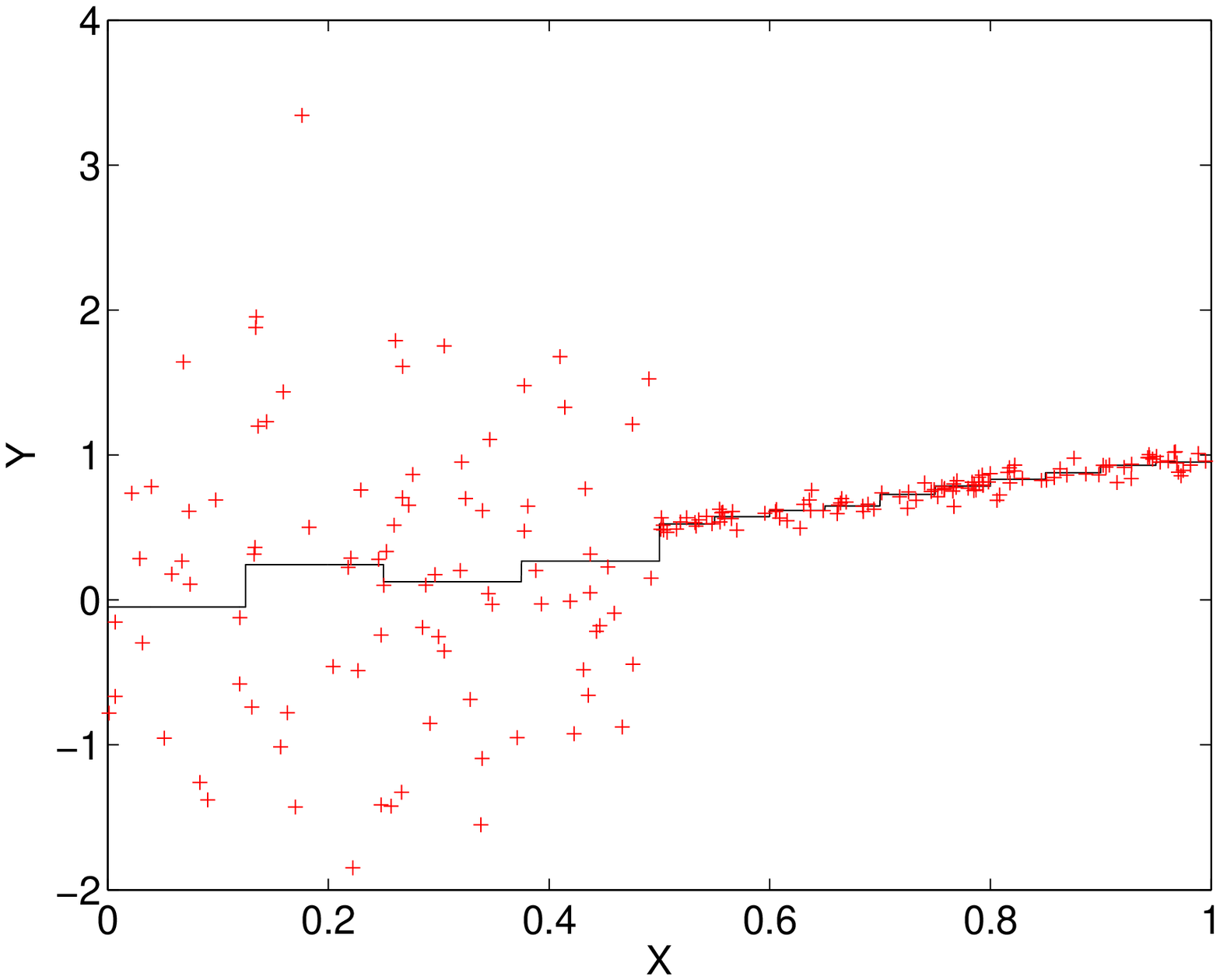}
\end{minipage}
\hspace{.025\linewidth}
\begin{minipage}[b]{.48\linewidth}
\includegraphics[width=\textwidth]{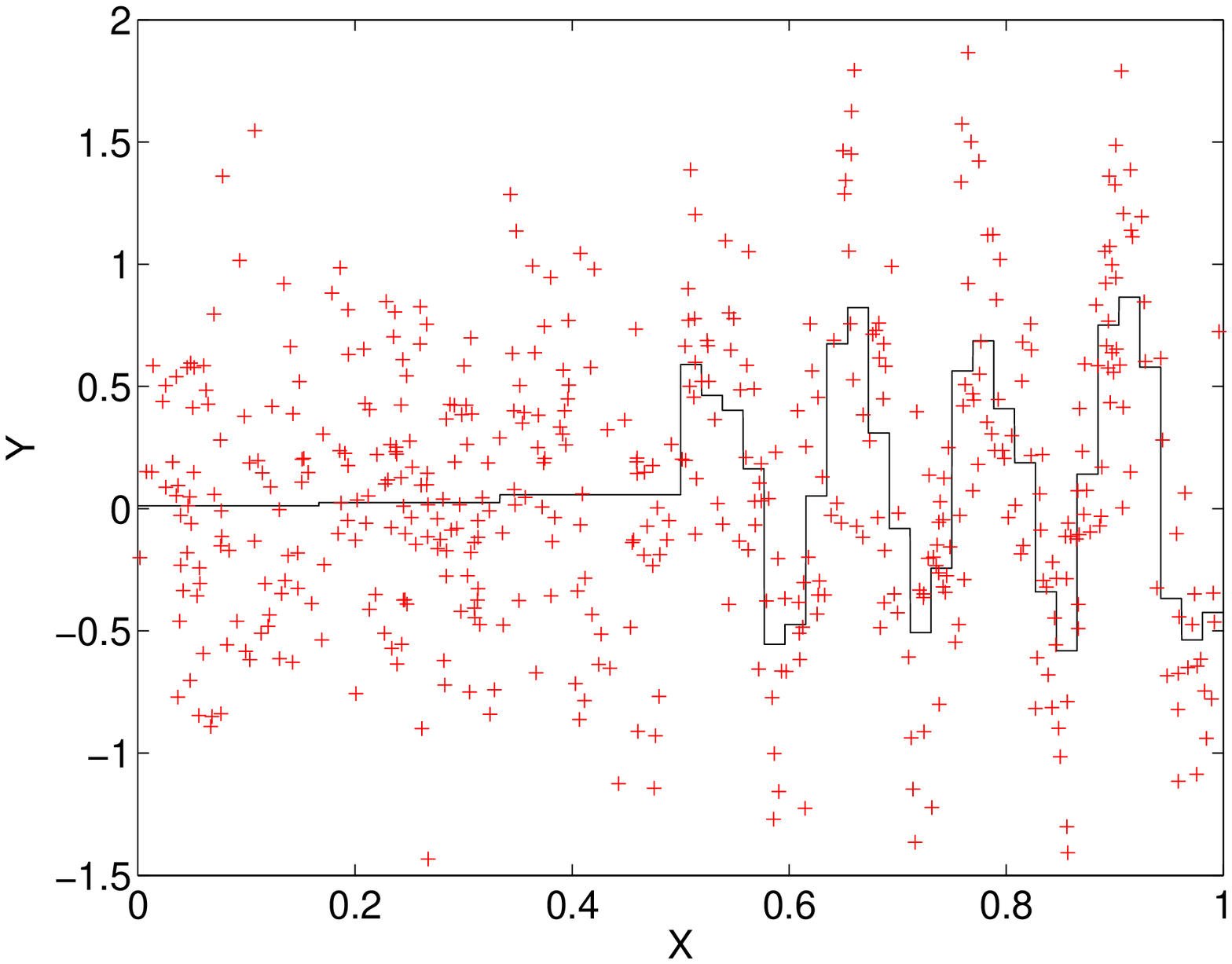}
\end{minipage}
\caption{One data sample (red `$+$') and the corresponding oracle estimators (black `--') among the family $\paren{\ERM_m}_{\mMdeuxpas_n}\,$.
Left: heteroscedastic data ($\bayes(x)=x\,$; $\sigma(x)=1$ if $x\leq 1/2$ and $\sigma(x)=1/20$ otherwise) with sample size $n=200\,$.
Right: homoscedastic data ($\bayes(x)=x/4$ if $x\leq 1/2$ and $\bayes(x)=1/8 + 2/3\times \sin(16 \pi x) $ otherwise; $\sigma(x)=1/2$) with sample size $n=500\,$.
\label{fig.interet.Mdeuxpas}
}
\end{figure}

\medskip

Using only two different bin sizes, with a fixed split at $1/2$, obviously is not the only collection of histograms that may be used. 
Let us mention here a few examples of alternative histogram collections:
\begin{itemize}
\item the split can be put at any fixed position $t \in (0,1)$ (possibly with different maximal number of bins $M_{n,1}$ and $M_{n,2}$ on each side of $t$), leading to the collection $\paren{S_m}_{m\in\Mdeuxpasjoint{t}_n }\,$.
\item the position of the split can be variable: 
\[ \Mdeuxpasjointvar_n = \bigcup_{t \in \mathcal{T}_n} \Mdeuxpasjoint{t}_n \quad \mbox{where} \quad \mathcal{T}_n \subset (0,1) \enspace , \quad 
\mbox{for instance} \quad  \mathcal{T}_n = \set{\frac{k}{\sqrt{n}} \telque 1 \leq k \leq \sqrt{n}-1 } \enspace . \] 
\item instead of a single split, one could consider collections with several splits (fixed or not), such that $\set{ 1/3 , 2/3 }$ or $\set{ 1/4,1/2,3/4 }$ for instance.
\end{itemize}
Remark that $\card(\Mdeuxpas_n) \leq M_n^2 \leq n^2 \,$, and the cardinalities of all other collections are smaller than some power of $n$.
Therefore, as explained in Section~\ref{sec.urep} below, penalization procedures using an estimator of $\penid(m)$ for every $\mM_n$ as a penalty are relevant.
This paper does not consider collections whose cardinalities grow faster than some power of $n$, such as the ones used for multiple change-point detection. 
Indeed, the model selection problem is of different nature for such collections, and requires the use of different penalties; see for instance \cite{Arl_Cel:2009:segm} about this particular problem. 

\medskip

Most results of the paper are proved for model selection among $\paren{S_m}_{m\in\Mdeuxpas_n}\,$, which already captures most of the difficulty of model selection when data are heteroscedastic.
The simplicity of $\paren{S_m}_{m\in\Mdeuxpas_n}$ may be a drawback for analyzing real data; in the present theoretical study, simplicity helps developing intuitions about the general case. 
Note that all results of the paper can be proved similarly when $\M_n=\Mdeuxpasjoint{t}$ and $t\in(0,1)$ is fixed; we conjecture these results can be extended to $\M_n=\Mdeuxpasjointvar_n\,$, at the price of additional technicalities in the proofs.

\section{Unbiased risk estimation principle for heteroscedastic data} \label{sec.urep}
The unbiased risk estimation principle is among the most classical approaches for model selection \cite{Ste:1981}. Let us first summarize it in the general framework. 
\subsection{General framework} \label{sec.urep.principe}
Assume that for every $\mM_n \, $, $\crit(m,(X_i,Y_i)_{1\leq i \leq n})$ estimates unbiasedly the risk $P\gamma\paren{\ERM_m}$ of the estimator $\ERM_m \,$. 
Then, an oracle inequality like \eqref{eq.oracle} with $\KoracleGenerique\approx 1$ should be satisfied by any minimizer $\mh$ of $\crit(m,(X_i,Y_i)_{1\leq i \leq n})$ over $\mM_n\,$.
For instance, FPE \cite{Aka:1969}, SURE \cite{Ste:1981} and cross-validation \cite{All:1974,Sto:1974,Gei:1975} are model selection procedures built upon the unbiased risk estimation principle.

\medskip

When $\crit(m,(X_i,Y_i)_{1\leq i \leq n})$ is a penalized empirical criterion given by \eqref{HP.eq.penalization}, the unbiased risk estimation principle can be rewritten as 
\[ \forall \mM_n \, , \quad \pen(m) \approx \E\croch{\penid(m)} = \E\croch{ \paren{P - P_n} \gamma\paren{\ERM_m} } \enspace , \] 
which is also known as Akaike's heuristics  or Mallows' heuristics.
For instance, AIC \cite{Aka:1973}, $C_p$ or $C_L$ \cite{Mal:1973} (see Section~\ref{sec.dimbased.ex}), covariance penalties \cite{Efr:2004} and resampling penalties \cite{Efr:1983,Arl:2009:RP} are penalties built upon the unbiased risk estimation principle.

\medskip

The unbiased risk estimation principle can lead to oracle inequalities with leading constant $\KoracleGenerique=1+o(1)$ when $n$ tends to infinity, by proving that deviations of $P\gamma\paren{\ERM_m}$ around its expectation are uniformly small with large probability. Such a result can be proved in various frameworks as soon as the number of models grows at most polynomially with $n$, that is, $\card(\M_n) \leq \cM n^{\aM}$ for some $\cM,\aM>0\,$; see for instance \cite{Bir_Mas:2006,Arl_Mas:2009:pente} and references therein for recent results in this direction in the regression framework.

\subsection{Histogram models} \label{sec.urep.histos}
Let $S_m$ be the histogram model associated with a partition $\Lambda_m$ of $\X\,$.
Then, the concentration inequalities of Section~\ref{sec.proof.conc} show that for most models, the ideal penalty is close to its expectation.
Moreover, the expectation of the ideal penalty can be computed explicitly thanks to Proposition~\ref{VFCV.pro.EcritVFCV-EcritID}, first proved in a previous paper \cite{Arl:2008a}:
\begin{equation} \label{LL.eq.Epenid}
\E\croch{ \penid(m) } = \frac{1}{n} \sum_{\lamm} \paren{ 2 + \delta_{n,\pl} } \paren{\carre{\sigla} + \carre{\sigld}} 
\end{equation}
where for every $\lamm\,$,
\begin{gather*}
\carre{\sigla} \egaldef \E \croch{ \paren{Y - \bayes(X)}^2 \sachant X \in \Il } = \E \croch{ \paren{\sigma(X)}^2 \sachant X \in \Il} \qquad
\carre{\sigld} \egaldef \E \croch{ \paren{\bayes(X) - \bayes_m(X)}^2 \sachant X \in \Il } 
\\
\pl \egaldef \P(X \in \Il) \qquad
\mbox{and} \qquad \forall n \in \N \, , \, \forall p \in (0,1] \, , \quad \absj{ \delta_{n,p} } \leq \min\set{ L_1, \frac{L_2}{(np)^{1/4}}}
\end{gather*}
for some absolute constants $L_1,L_2>0\,$.

\medskip

When data are homoscedastic, \eqref{LL.eq.Epenid} shows that if $D_m \, , \min_{\lamm}\set{n\pl} \rightarrow \infty$ and if $\norm{\bayes-\bayes_m}_{\infty} \to 0\,$,
\[ \forall \mM_n \, , \quad \E\croch{\penid(m)} \approx \frac{2 \sigma^2 D_m}{n} + \frac{2}{n} \sum_{\lamm} \carre{\sigld} \approx \frac{2 \sigma^2 D_m}{n} = \penMal(m) \enspace , \]
so that $C_p$ should yield good model selection performances by the unbiased risk estimation principle.
When the constant noise-level $\sigma$ is unknown, $\penMal$ can still be used by replacing $\sigma^2$ by some unbiased estimator of $\sigma^2\,$; see for instance \cite{Bar:2002} for a theoretical analysis of the performance of $C_p$ with some classical estimator of $\sigma^2\,$. 

On the contrary, when data are heteroscedastic, \eqref{LL.eq.Epenid} shows that applying the unbiased risk estimation principle requires to take into account the variations of $\sigma$ over $\X$. 
Without prior information on $\sigma(\cdot)\,$, building a penalty for the general heteroscedastic framework is a challenging problem, for which resampling methods have been successful.

\subsection{Resampling-based penalization} \label{sec.urep.RP}
The resampling heuristics \cite{Efr:1979} provides a way of estimating the distribution of quantities of the form $F(P,P_n)\,$, by building randomly from $P_n$ several ``resamples'' with empirical distribution $\Pnb\,$. Then, the distribution of $F(P_n,\Pnb)$ conditionally on $P_n$ mimics the distribution of $F(P,P_n)\,$. We refer to \cite{Arl:2009:RP} for more details and references on the resampling heuristics in the context of model selection.
Since $\penid(m)=F_m(P,P_n)\,$, the resampling heuristics can be used for estimating $\E\croch{\penid(m)}$ for every $\mM_n\,$.
Depending on how resamples are built, we can obtain different kinds of resampling-based penalties, in particular the following three ones.

First, bootstrap penalties \cite{Efr:1983} are obtained with the classical bootstrap resampling scheme, where the resample is an $n$-sample i.i.d. with common distribution $P_n\,$.
Second, general exchangeable resampling schemes can be used for defining the family of (exchangeable) resampling penalties \cite{Arl:2009:RP,Ler:2009:omsde}.
Third, $V$-fold penalties \cite{Arl:2008a} are a computationally efficient alternative to bootstrap and other exchangeable resampling penalties; they follow from the resampling heuristics with a subsampling scheme inspired by $V$-fold cross-validation.

Let us define here $V$-fold penalties, which are of particular interest because of their smaller computational cost when $V$ is small. 
Let $V \in \set{2, \ldots, n}\,$ and $\paren{B_j}_{1 \leq j \leq V}$ be a fixed partition of $\set{1, \ldots, n}$ such that $\sup_j \absj{\card(B_j) - n/V} < 1 \,$. 
For every $j$, define 
\begin{gather*} P_n^{(-j)} = \frac{1}{n - \card(B_j)} \sum_{i \notin B_j} \delta_{(X_i,Y_i)} \\
\mbox{and} \qquad \forall \mM_n \, , \quad \ERM_m^{(-j)} \in \argmin_{t \in S_m} \set{P_n^{(-j)} \gamma\paren{t}} \enspace . 
\end{gather*}
Then, the $V$-fold penalty is defined by 
\begin{equation} \label{eq.penVF} \penVF(m) \egaldef \frac{V-1}{V} \sum_{j=1}^V \paren{ P_n - P_n^{(-j)}} \gamma\paren{\ERM_m^{(-j)}} \enspace . \end{equation}

In the least-squares regression framework, exchangeable resampling and $V$-fold penalties have been proved in \cite{Arl:2009:RP,Arl:2008a} to satisfy an oracle inequality of the form \eqref{eq.oracle} with leading constant $\KoracleGenerique=\KoracleGenerique(n) \rightarrow 1$ when $n \rightarrow \infty\,$.
In order to state precisely one of these results, let us introduce a set of assumptions, called \hypHist. 
\begin{assumptionset}
For every $\mM_n \, $, $S_m$ is the set of piecewise constants functions on some fixed partition $\Lambda_m$ of $\X\,$, and $\M_n$ satisfies:
\begin{enumerate}
\item[\hypPpoly] Polynomial complexity of $\M_n\,$\textup{:} $\card(\M_n) \leq \cM n^{\aM}\,$.
\item[\hypPrich] Richness of $\M_n\,$\textup{:} $\exists m_0 \in \M_n$ s.t. $D_{m_0} = \card(\Lambda_{m_0}) \in \croch{\sqrt{n},c_{\mathrm{rich}} \sqrt{n}}\,$.
\end{enumerate}
Moreover, data $(X_i,Y_i)_{1 \leq i \leq n}$ are i.i.d. and satisfy:
\begin{enumerate}
\item[\hypAb] Data are bounded\textup{:} $\norm{Y_i}_{\infty} \leq A < \infty\,$.
\item[\hypAn] Uniform lower-bound on the noise level\textup{:} $\sigma(X_i) \geq \sigmin>0$ a.s.
\item[\hypAp] The bias decreases like a power of $D_m\,$\textup{:} constants $\betamin\geq\betamaj>0$ and $\cbiasmaj,\cbiasmin>0$ exist such that 
\[ \forall \mM_n \, , \quad \cbiasmin D_m^{-\betamin} \leq \perte{\bayes_m} \leq \cbiasmaj D_m^{-\betamaj} \enspace . \]
\item[\hypArXl] Lower regularity of the partitions for $\loi(X)\,$\textup{:} $ D_m \min_{\lamm} \set{\Prob\paren{X \in \Il}} \geq \crXl\,$.
\end{enumerate}
\end{assumptionset}
\begin{remark} \label{rk.hypHist}
Assumption set \hypHist\ is shown to be mild and discussed extensively in \cite{Arl:2009:RP}; we do not report such a discussion here because it is beyond the scope of the paper.  
In particular, when $\bayes$ is non-constant, $\alpha$-H\"olderian for some $\alpha \in (0,1]$ and $X$ has a lower bounded density with respect to the Lebesgue measure on $\X=[0,1]$, assumptions \hypPpoly, \hypPrich, \hypAp\ and \hypArXl\ are satisfied by all the examples of model collections given in Section~\ref{sec.cadre.collec} (see in particular \cite{Arl:2008a} for a proof of the lower bound in \hypAp\ for regular partitions, which applies to the examples of Section~\ref{sec.cadre.collec} since they are ``piecewise regular'').
Note also that {\em all the results} of the present paper relying on \hypHist\ also hold under various alternative assumption sets. For instance, \hypAb\ and \hypAn\ can be relaxed, see \cite{Arl:2009:RP} for details.
\end{remark}
\begin{theorem}[Theorem~2 in \cite{Arl:2008a}] \label{th.penVF+RP}
Assume that \hypHist\ holds true. 
Then, for every $V\geq 2$, a constant $\KThmPenVFOKproba(V)$ \textup{(}depending only on $V$ and on the constants appearing in \hypHist\textup{)} and an event of probability at least $1-\KThmPenVFOKproba(V) n^{-2}$ exist on which, for every 
\begin{gather}
\notag \mhpenVF \in \argmin_{\mM_n} \set{ P_n \gamma\paren{\ERM_m} + \penVF(m) } \enspace , \\
\perte{\ERM_{\mhpenVF}} \leq \paren{1 + \paren{\ln n}^{-1/5}} \inf_{\mM_n} \set{ \perte{\ERM_m} } \enspace . \notag
\end{gather}
\end{theorem}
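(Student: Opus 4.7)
The plan is to apply the unbiased risk estimation principle, made rigorous by uniform concentration over the polynomial-size collection $\M_n$. Writing $\penid(m) = (P - P_n)\gamma(\ERM_m)$ and using the defining minimality of $\mhpenVF$, one obtains for every $m \in \M_n$
\[
\perte{\ERM_{\mhpenVF}} + \bigl[\penVF - \penid\bigr](\mhpenVF) \;\leq\; \perte{\ERM_m} + \bigl[\penVF - \penid\bigr](m) \enspace .
\]
The proof thus reduces to showing that on an event of probability at least $1 - \KThmPenVFOKproba(V) n^{-2}$, one has, for every $m \in \M_n$,
\[
\bigl\lvert \penVF(m) - \penid(m) \bigr\rvert \;\leq\; \eta_n \cdot \perte{\ERM_m} + r_n \enspace ,
\]
with $\eta_n = \grandO\bigl((\ln n)^{-1/5}\bigr)$ and $r_n$ much smaller than $\inf_{m \in \M_n} \perte{\ERM_m}$; rearranging then yields the announced oracle inequality with leading constant $1 + (\ln n)^{-1/5}$.

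\emph{Computing $\E[\penVF(m)]$.} The first step is a closed-form computation parallel to~\eqref{LL.eq.Epenid}. Conditionally on the design, each sub-estimator $\ERM_m^{(-j)}$ is piecewise constant on $\Lambda_m$ with value on $\Il$ equal to the local empirical mean over the points outside $B_j$, so $\penVF(m)$ decomposes bin-by-bin. An explicit second-moment calculation yields
\[
\E[\penVF(m)] \;=\; \frac{1}{n} \sum_{\lamm} \bigl(2 + \delta^V_{n,\pl}\bigr)\bigl((\sigla)^2 + (\sigld)^2\bigr) \enspace ,
\]
with a perturbation $\delta^V_{n,p}$ satisfying the same type of bound as $\delta_{n,p}$, namely $\lvert \delta^V_{n,p} \rvert \leq L_1'(V) \wedge L_2'(V)(np)^{-1/4}$. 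Combined with~\hypArXl\ (which gives $n\pl \geq \crXl n/D_m$), this shows $\E[\penVF(m)] = \E[\penid(m)]\bigl(1 + \grandO((D_m/n)^{1/4})\bigr)$ uniformly over $m \in \M_n$; i.e.\ $\penVF$ is an asymptotically unbiased estimator of $\penid$.

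\emph{Concentration and union bound.} Both $\penid(m)$ and $\penVF(m)$ decompose as sums over $\lamm$ of bounded contributions involving the data restricted to $\Il$, boundedness following from~\hypAb\ and nondegeneracy of the variances from~\hypAn. Bernstein's inequality applied conditionally on the design, coupled with Bernstein's inequality for the bin counts $n\pl$, yields for each fixed $m$ a deviation of relative order $\sqrt{(\ln n)/D_m}$ of each of $\penid(m)$ and $\penVF(m)$ around its expectation. Assumption~\hypPpoly\ then allows a union bound over $\M_n$ at a cost of $\sqrt{\aM \ln n}$ in the deviation constant. Combining with the expectation comparison above produces an absolute control on $\lvert\penVF(m) - \penid(m)\rvert$, which is converted into the desired relative bound via~\hypAp\ and the two-sided comparison $\perte{\ERM_m} \asymp D_m^{-\betamin} + D_m/n$ (upper part of~\hypAp\ plus~\hypAn\ for the lower variance).

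The main obstacle is obtaining the precise rate $(\ln n)^{-1/5}$. It emerges from optimising over $D_m$ a trade-off between three competing error sources: the concentration deviation of relative order $\sqrt{(\ln n)/D_m}$, the expectation bias of relative order $(D_m/n)^{1/4}$ coming from $\delta^V_{n,\pl}$ and $\delta_{n,\pl}$, and the slack incurred when passing from absolute to relative errors via~\hypAp. The richness assumption~\hypPrich\ furnishes a reference model with $D_{m_0} \asymp \sqrt n$, which guarantees $\inf_{m \in \M_n}\perte{\ERM_m} \geq c\,n^{-\betamin/(1+\betamin)} \gg n^{-1}$; this ensures that the additive remainder $r_n$ is absorbed into the leading constant rather than surviving as an additive term. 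Tracking the dependence of all constants on $V$ and on the parameters of~\hypHist\ through the two concentration steps is notationally heavy but routine once the exponent $1/5$ is fixed.
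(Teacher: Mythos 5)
Keep in mind that this theorem is not proved in the present paper at all: it is recalled from \cite{Arl:2008a} (Theorem~2 there), and the text of Section~\ref{sec.urep.RP} only indicates the strategy---$\penVF(m)$ is a nearly unbiased and well-concentrated estimator of $\penid(m)$ for every $m$, and \hypPpoly\ permits a union bound. Your outline follows exactly this strategy, which is indeed the route of the cited proof (comparison inequality from the definition of $\mhpenVF$, explicit histogram computation of $\E\croch{\penVF(m)}$ parallel to \eqref{LL.eq.Epenid}, concentration of $\penid$ and $\penVF$ around their expectations, conversion to a relative bound via \hypAp/\hypAn, absorption of additive remainders). At the level of a sketch the plan is sound.

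Two points, however, would not survive being written out. First, the concentration step is the real technical content and ``Bernstein conditionally on the design'' understates it: $p_1(m)$ and its $V$-fold analogues involve the random bin counts in denominators, and since \hypHist\ puts no cap on $D_m$, the largest models may have only a bounded expected number of points per bin; the bounds one can actually prove (Proposition~\ref{VFCV.pro.conc.penid}, imported from \cite{Arl:2009:RP}) carry correction terms of the form $(\ln n)^2 D_m^{-1/2}$ \emph{and} $\sqrt{D_m}\,e^{-L \min_{\lamm} n\pl}$, not a uniform relative error $\sqrt{(\ln n)/D_m}$, and handling the regimes where these are not small is where most of the work of \cite{Arl:2008a} lies. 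Second, you attribute to \hypPrich\ the lower bound $\inf_{\mM_n}\perte{\ERM_m}\gtrsim n^{-\betamin/(1+\betamin)}$ used to absorb the remainder $r_n$; that bound in fact comes from the lower bias bound in \hypAp\ together with \hypAn\ (bias $\gtrsim \cbiasmin D_m^{-\betamin}$, estimation error $\gtrsim \sigmin^2 D_m/n$ on the good event). \hypPrich\ serves the opposite purpose: it furnishes a reference model of dimension $\asymp\sqrt n$, hence an \emph{upper} bound of order $n^{-\betamaj/2}+n^{-1/2}$ on the oracle risk, which is what rules out the small-dimension (large-bias) models for which the relative concentration fails. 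Relatedly, the exponent $-1/5$ is not the solution of a bias/variance trade-off over $D_m$: for the dimensions that matter all error terms are negative powers of $n$, and $(\ln n)^{-1/5}$ is simply a convenient slowly vanishing envelope chosen in the statement of the cited theorem.
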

In particular, $V$-fold penalization is asymptotically optimal: when $n$ tends to infinity, the excess loss of the estimator $\ERM_{\mhpenVF}$ is equivalent to the excess loss of the oracle estimator $\ERM_{\mo}\,$, defined by $\mo \in \argmin_{\mM_n}\set{\perte{\ERM_m}}\,$.
A result similar to Theorem~\ref{th.penVF+RP} has also been proved for exchangeable resampling penalties in \cite{Arl:2009:RP}, under the same assumption set \hypHist. In particular, Theorem~\ref{th.penVF+RP} is still valid when $V=n\,$. Let us emphasize that {\em general unknown variations of the noise-level $\sigma(\cdot)\,$} are allowed in Theorem~\ref{th.penVF+RP}.

Theorem~\ref{th.penVF+RP}---as well as its equivalent for exchangeable resampling penalties---mostly follows from the unbiased risk estimation principle presented in Section~\ref{sec.urep.principe}: For every model $\mM_n\,$, $\E\croch{ \penVF(m)}$ is close to $\E\croch{\penid(m)}$ whatever the variations of $\sigma(\cdot)\,$, and deviations of $\penVF(m)$ around its expectation can be properly controlled. The oracle inequality follows, thanks to \hypPpoly. 

\medskip

The main drawback of exchangeable resampling penalties, and even $V$-fold penalties, is their computational cost. Indeed, computing these penalties requires to compute for every $\mM_n$ a least-squares estimator $\ERM_m$ several times: $V$ times for $V$-fold penalties, at least $n$ times for exchangeable resampling penalties.
Therefore, except in particular problems for which $\ERM_m$ can be computed fastly, all resampling-based penalties can be untractable when $n$ is too large, except maybe $V$-fold penalties with $V=2$ or $3$.
Note that ($V$-fold) cross-validation methods suffer from the same drawback, in addition to their bias which makes them suboptimal when $V$ is small, see \cite{Arl:2008a}.

Furthermore, Theorem~\ref{th.penVF+RP} could suggest that the performance of $V$-fold penalization does not depend on $V$, so that the best choice always is $V=2$ which minimizes the computational cost. 
Although this asymptotically holds true at first order, quite a different picture holds when the signal-to-noise ratio is small, according to the simulation studies of \cite{Arl:2008a} and of Section~\ref{sec.simus} below. 
Indeed, the amplitude of deviations of $\penVF(m)$ around its expectation decreases with $V$, so that the statistical performance of $V$-fold penalties can be much better for large $V$ than for $V=2\,$. 

Remark that one could also define {\em hold-out penalties} by 
\begin{gather*} 
\forall \mM_n \, , \quad 
\penHO(m) \egaldef \frac{\card(I)}{n-\card(I)} \paren{ P_n - P_n^{(I)}} \gamma\paren{\ERM_m^{(I)}}
\quad \mbox{where } I \subset\set{1, \ldots, n} \mbox{ is deterministic,} \\
P_n^{(I)} = \frac{1}{\card(I)} \sum_{i \in I} \delta_{(X_i,Y_i)} \qquad 
\mbox{and} \qquad \forall \mM_n \, , \quad \ERM_m^{(I)} \in \argmin_{t \in S_m} \set{P_n^{(I)} \gamma\paren{t}} \enspace , 
\end{gather*}
which only requires to compute once $\ERM_m$ for each $\mM_n\,$.
The proof of Theorem~\ref{th.penVF+RP} can then be extended to hold-out penalties provided that $\min\set{\card(I),n-\card(I)}$ tends to infinity with $n$ fastly enough, for instance when $\card(I) \approx n/2\,$.
Nevertheless, hold-out penalties suffer from a larger variability than 2-fold penalties, which leads to quite poor statistical performances.

\medskip

Therefore, when computational power is strongly limited and the signal-to-noise ratio is small, it may happen that none of the above resampling-based model selection procedures is satisfactory in terms of both computational cost and statistical performance.
The purpose of the next two sections is to investigate whether the dimensionality of the models, which is freely available in general, can be used for building a computationally cheap model selection procedure with reasonably good statistical performance, in particular compared to $V$-fold penalties with $V$ small.

\section{Dimensionality-based model selection} \label{sec.dimbased}
Dimensionality as a vector space is the only information about the size of the models that is freely available in general.
So, when some penalty must be proposed, functions of the dimensionality $D_m$ of model $S_m$ are the most natural (and classical) proposals.
This section intends to measure the statistical performance of such procedures for least-squares regression with heteroscedastic data. 

\subsection{Examples} \label{sec.dimbased.ex}
As previously mentioned in Section~\ref{HP.sec.cadre.mod_selec}, $C_p$  defined by 
$\penMal(m) = 2 \sigma^2 D_m / n$ is the among most classical penalties for least-squares regression \cite{Mal:1973}. 
$C_p$ belongs to the family of {\em linear penalties}, that is, of the form 
\[ \Kh D_m  \enspace , \]
where $\Kh$ can either depend on prior information on $P$ (for instance, the value $\sigma$ of the---constant---noise-level) or on the sample only. 
A popular choice is $\Kh=2 \sighsq/n\,$, where $\sighsq$ is an estimator of the variance of the noise, see Section~6 of \cite{Bar:2000} for instance. Birg\'e and Massart \cite{Bir_Mas:2006} recently proposed an alternative procedure for choosing $\Kh$, based upon the ``slope heuristics''.

Refined versions of $C_p$ have been proposed, for instance in \cite{Bir_Mas:2006,Bar:2002,Sau:2006}---always assuming homoscedasticity.
Most of them are of the form 
\begin{equation} \label{eq.pen.F(D)}
\pen(m) = \Fh(D_m)
\end{equation}
where $\Fh$ depends on $n$ and $\sigma^2\,$, or an estimator $\sighsq$ of $\sigma^2\,$ when $\sigma^2$ is unknown.
The rest of the section focuses on {\em dimensionality-based penalties}, that is, penalties of the form \eqref{eq.pen.F(D)}.

\subsection{Characterization of dimensionality-based penalties} \label{sec.dimbased.charact}
Let us define, for every $D \in \Dimset_n = \set{D_m \telque \mM_n } \, $, 
\[ \Mdim(D) \egaldef \argmin_{\mM_n \telque D_m = D} \set{ P_n \gamma\paren{ \ERM_m } } \qquad \mbox{and} \qquad \Mdim \egaldef \bigcup_{D \in \Dimset_n} \Mdim(D) \enspace . \]
The following lemma shows that any dimensionality-based penalization procedure actually selects $\mh \in \Mdim\,$.
\begin{lemma} \label{claim:1}
For every function $F: \M_n \flens \R $ and any sample $(X_i,Y_i)_{1\leq i \leq n}\,$,
\[ \argmin_{\mM_n} \set{ P_n \gamma\paren{\ERM_m} + F(D_m) } \subset \Mdim \enspace . \]
\end{lemma}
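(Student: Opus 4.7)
The plan is essentially a one-line observation: since $F(D_m)$ only depends on $m$ through $D_m$, the penalty $F(D_m)$ cannot discriminate between two models of equal dimension. So within each dimension class, the penalized criterion reduces to the unpenalized empirical risk, and the overall minimizer must in particular be optimal within its own dimension class---which is exactly the defining property of $\Mdim$.

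More concretely, I would start from an arbitrary $\mh \in \argmin_{\mM_n}\set{P_n\gamma(\ERM_m)+F(D_m)}$ and set $D=D_{\mh}\,$. Pick any $m' \in \Mdim(D)$, so that by definition of $\Mdim(D)$ we have $P_n\gamma(\ERM_{m'}) \leq P_n\gamma(\ERM_{\mh})\,$. Since $D_{m'}=D=D_{\mh}\,$, we also have $F(D_{m'})=F(D_{\mh})\,$, hence
\[ P_n\gamma(\ERM_{m'})+F(D_{m'}) \leq P_n\gamma(\ERM_{\mh})+F(D_{\mh}) \enspace . \]
By optimality of $\mh\,$, the reverse inequality holds, so both are equalities. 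In particular $P_n\gamma(\ERM_{\mh})=P_n\gamma(\ERM_{m'})=\min_{m\in\M_n,\,D_m=D}\set{P_n\gamma(\ERM_m)}\,$, which means $\mh\in\Mdim(D)\subset\Mdim\,$.

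There is no real obstacle here; the statement is a tautological consequence of the structure of dimensionality-based penalties. The only minor care needed is that $\Mdim(D)$ is defined as an $\argmin$, so one should note that it is nonempty whenever $D\in\Dimset_n\,$ (which is the case for $D=D_{\mh}$ since $\mh$ witnesses $D_{\mh}\in\Dimset_n$), and that this nonemptiness follows from $\M_n$ being finite---or more generally from the existence of any minimizer of $P_n\gamma(\ERM_m)$ among the finite set $\sset{m\in\M_n\telque D_m=D}\,$.
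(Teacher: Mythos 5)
Your proof is correct and is essentially the paper's own argument: both rest on the observation that $F(D_m)$ cancels when comparing models of the same dimension, so the optimality of $\mh$ restricted to the class $\sset{m \in \M_n \telque D_m = D_{\mh}}$ reduces to minimality of the empirical risk within that class, i.e. $\mh \in \Mdim(D_{\mh}) \subset \Mdim\,$. The only cosmetic difference is that you route the comparison through a particular $m' \in \Mdim(D_{\mh})$ and deduce equality of criteria, whereas the paper applies the optimality inequality directly to every model of the same dimension; the content is identical.
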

\begin{proof}[proof of Lemma~\ref{claim:1}]
Let $\mh_F \in \argmin_{\mM_n} \set{ P_n \gamma\paren{\ERM_m} + F(D_m) } \,$. 
Then, whatever $\mM_n \,$, 
\begin{equation} \label{eq.pr.claim:1} P_n \gamma\paren{\ERM_{\mh_F}} + F(D_{\mh_F}) \leq P_n \gamma\paren{\ERM_m} + F(D_m) \enspace . \end{equation}
In particular, \eqref{eq.pr.claim:1} holds for every $\mM_n$ such that $D_m=D_{\mh_F}\,$, for which $F(D_{\mh_F})=F(D_m)\,$. Therefore, \eqref{eq.pr.claim:1} implies that $\mh_F\in \Mdim(D_{\mh_F})\,$, hence $\mh_F \in \Mdim\,$.
\end{proof}

Lemma~\ref{claim:1} shows that despite the variety of functions $F$ that can be used as a penalty, using a function of the dimensionality as a penalty always imply selecting among $\paren{S_m}_{m\in \Mdim}$ (keeping in mind that $\Mdim$ is random). 
Indeed, penalizing with a function of $D$ means that all models of a given dimension $D$ are penalized in the same way, so that the empirical risk alone is used for selecting among models of the same dimension.
By extension, we will call {\em dimensionality-based model selection procedure} any procedure selecting a.s. $\mh \in \Mdim\,$.

Breiman \cite{Bre:1992} previously noticed that only a few models---called ``RSS-extreme submodels''---can be selected by penalties of the form $F(D_m)=K D_m$ with $K\geq 0\,$. Although Breiman stated this limitation can be benefic from the computational point of view, results below show that this limitation precisely makes the quadratic risk increase when data are heteroscedastic.

\subsection{Pros and cons of dimensionality-based model selection}
As shown by equation \eqref{LL.eq.Epenid}, when data are heteroscedastic, $\E\croch{\penid(m)}$ is no longer proportional to the dimensionality $D_m\,$. The expectation of the ideal penalty actually is even not a function of $D_m$ in general. Therefore, the unbiased risk estimation principle should prevent anyone from using dimensionality-based model selection procedures.

Nevertheless, dimensionality-based model selection procedures are still used for analyzing heteroscedastic data for at least three reasons:
\begin{itemize}
\item by ignorance of any other trustable model selection procedure than $C_p\,$, or of the assumptions of $C_p\,$; 
\item because data are (wrongly) assumed to be homoscedastic; 
\item because they are simple and have a mild computational cost, no other measure of the size of the models being available.
\end{itemize}
The last two points can indeed be good reasons, provided that we know what we can loose---in terms of quadratic risk---by using a dimensionality-based model selection procedure instead of, for instance, some resampling-based penalty.
The purpose of the next subsections is to estimate theoretically the price of violating the unbiased risk estimation principle in heteroscedastic regression.

\subsection{Suboptimality of dimensionality-based penalization} \label{sec.main.subopt}
Theorem~\ref{th.shape} below shows that any dimensionality-based penalization procedure fails to attain asymptotic optimality for model selection among $\paren{S_m}_{m\in\Mdeuxpas_n}$ when data are heteroscedastic.
\begin{theorem} \label{th.shape}
Assume that data $(X_1,Y_1), \ldots, (X_n,Y_n) \in [0,1]\times\R$ are independent and identically distributed \textup{(}i.i.d.\textup{)}, $X_i$ has a uniform distribution over $\X$ and $\forall i = 1, \ldots, n\,$, $Y_i = X_i + \sigma(X_i) \varepsilon_i$ where $(\varepsilon_i)_{1 \leq i \leq n}$ are independent such that $\E\croch{\varepsilon_i \sachant X_i}=0$ and $\E\croch{\varepsilon_i^2 \sachant X_i} =1 \, $.
Assume moreover that $\bayes$ is twice continuously differentiable, 
\begin{gather*} 
\norm{\varepsilon_i}_{\infty} \leq E < \infty \enspace , \qquad \min\set{\carre{\siga}, \carre{\sigb}} >  0 \quad \mbox{and} \quad \carre{\siga} \neq \carre{\sigb}  \\
\mbox{where} \qquad \carre{\siga} \egaldef \int_{0}^{1/2} \paren{\sigma(x)}^2 dx \quad \mbox{and} \quad  \carre{\sigb} \egaldef \int_{1/2}^{1} \paren{\sigma(x)}^2 dx \enspace .
\end{gather*}
Let $\M_n = \Mdeuxpas_n$ be the model collection defined in Section~\ref{sec.cadre.collec}, with a maximal dimension $M_n = \sfloor{n/(\ln(n))^2}\,$.
Then, constants $\KThmDimFAILproba,\KThmDimFAILoracle>0$ and an event of probability at least $1 - \KThmDimFAILproba n^{-2}$ exist on which, for every function $F: \M_n \flens \R$ and every $\mh_F \in \argmin_{\mM_n} \set{ P_n \gamma\paren{\ERM_m} + F(D_m) } \,$,
\begin{equation}
\label{eq.shape}
\perte{\ERM_{\mh_F}} \geq \KThmDimFAILoracle \inf_{\mM_n} \set{ \perte{\ERM_m} } \qquad \mbox{with} \quad \KThmDimFAILoracle > 1 \enspace .
\end{equation}
The constant $\KThmDimFAILoracle$ may only depend on $\carre{\siga} / \carre{\sigb}$; the constant $\KThmDimFAILproba$ may only depend on $E$, $\carre{\siga}$, $\carre{\sigb}$, $\norm{\bayes^{\prime}}_{\infty}$ and $\norm{\bayes^{\prime\prime}}_{\infty}\,$.
\end{theorem}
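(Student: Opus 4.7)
The plan is to reduce the claim to an expected-risk gap along the smaller subcollection $\Mdim$ and then prove this gap geometrically. By Lemma~\ref{claim:1}, every dimensionality-based selector satisfies $\mh_F \in \Mdim$, so it is enough to establish, on an event of probability at least $1 - \KThmDimFAILproba n^{-2}$,
\begin{equation*}
\inf_{m \in \Mdim} \perte{\ERM_m} \geq \KThmDimFAILoracle \inf_{m \in \M_n} \perte{\ERM_m} \enspace .
\end{equation*}
The right-hand side is governed by a deterministic oracle allocation; the left-hand side is constrained because, for each total dimension $D = D_1 + D_2\,$, $\Mdim(D)$ contains only the empirical-risk-minimizing allocation of bins between the two halves of $\X\,$, whose first-order condition is incompatible with the true bias-variance balance as soon as $\carre{\siga} \neq \carre{\sigb}\,$.

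First I would compute the two relevant expected functionals on $\Mdeuxpas_n\,$. Combining \eqref{LL.eq.Epenid} with the standard $C^2$ bias expansion for regular histograms yields, up to lower-order terms,
\begin{equation*}
\E\croch{\perte{\ERM_m}} \approx \frac{B_a}{48 D_1^2} + \frac{B_b}{48 D_2^2} + \frac{2(D_1 \carre{\siga} + D_2 \carre{\sigb})}{n} \enspace ,
\end{equation*}
\begin{equation*}
\E\croch{P_n \gamma\paren{\ERM_m}} \approx P\gamma(\bayes) + \frac{B_a}{48 D_1^2} + \frac{B_b}{48 D_2^2} - \frac{2(D_1 \carre{\siga} + D_2 \carre{\sigb})}{n} \enspace ,
\end{equation*}
where $B_a = \int_0^{1/2}(\bayes^{\prime})^2\,$, $B_b = \int_{1/2}^{1}(\bayes^{\prime})^2\,$. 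The oracle $\mo$ of the first functional lies at $(D_1^\star, D_2^\star)$ of order $n^{1/3}$ with $R^\star \asymp n^{-2/3}\,$; the constrained minimizer of the second on $\{D_1+D_2=D\}$ satisfies the Lagrange condition
\begin{equation*}
\frac{B_a}{D_1^3} + \frac{48 \carre{\siga}}{n} = \frac{B_b}{D_2^3} + \frac{48 \carre{\sigb}}{n} \enspace ,
\end{equation*}
whose two sides evaluate at $\mo$ to $96\carre{\siga}/n \neq 96\carre{\sigb}/n\,$: the oracle lies off this curve whenever $\carre{\siga} \neq \carre{\sigb}\,$, and a short computation shows that the displacement is of the same order as $D_i^\star$ itself, i.e.\ of order $1$ in the dimensionless variables $u = D_1/D_1^\star\,$, $v = D_2/D_2^\star\,$. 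Rewriting $\E\croch{\perte{\ERM_m}}/R^\star = [A g(u) + B g(v)]/[3(A+B)]$ with $g(x) = x^{-2} + 2x \geq 3\,$, strict convexity of $g$ at its unique minimum combined with this $O(1)$ separation yields a deterministic gap: the minimum of $\E\croch{\perte{\ERM_m}}/R^\star$ along the ERM curve is at least $1 + c\,$, with $c > 0$ chosen to depend only on $r = \carre{\siga}/\carre{\sigb}$ (extracting a lower bound independent of $B_a/B_b$ requires a short case analysis distinguishing interior and boundary oracles).

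Finally, I would transfer this deterministic gap to the random setting. Bernstein-type concentration applied uniformly to differences $P_n \gamma\paren{\ERM_m} - P_n \gamma\paren{\ERM_{m^\prime}}$ over pairs of models with the same total dimension, and to $\perte{\ERM_m}$ itself, combined with a union bound over $\card(\M_n) \leq n^2$ models, produces an event of probability at least $1 - \KThmDimFAILproba n^{-2}$ on which the random ERM-within-dimension lies within $o(D_1^\star)$ of the deterministic curve and realized losses agree with their expectations up to a factor $1 + o(1)\,$. Combined with the deterministic gap, this yields the stated inequality with $\KThmDimFAILoracle = 1 + c/2\,$, say. The hardest step is the uniform control of these fluctuations: the typical size of $P_n\gamma\paren{\ERM_m} - P_n\gamma\paren{\ERM_{m^\prime}}$ is of order $n^{-1}\sqrt{D}$ per pair and must be shown strictly smaller than the local curvature of the expected empirical criterion along each line $D_1 + D_2 = D\,$; the extreme values of $D$ (very small, or very close to $M_n$) are handled separately, since bias alone, respectively variance alone, already exceeds $R^\star$ by a constant factor there.
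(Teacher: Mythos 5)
Your proposal is correct in substance and shares most of its skeleton with the paper's proof: reduction via Lemma~\ref{claim:1} to the subcollection $\Mdim$, the $C^2$ bias expansion $\alpha_i D_{m,i}^{-2}$, the expectation formulas \eqref{LL.eq.Epenid}--\eqref{eq.Ep2} for the variance terms, concentration plus a union bound over $\card(\M_n)\leq n^2$ models, and a separate treatment of models with a very small number of bins on one side (the paper's Claims~\ref{claim:nopb}--\ref{claim:smallmod}). Where you genuinely diverge is the finishing mechanism. You characterize the within-dimension empirical minimizer through the Lagrange condition of the \emph{expected} empirical criterion, show the resulting deterministic curve is displaced from the oracle allocation by an order-one relative amount, lower-bound the loss along that curve by convexity of $g(x)=x^{-2}+2x$, and then localize the random within-dimension ERM near the curve. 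The paper instead never localizes that minimizer: for every model whose loss is within a factor $1+\eta_{\Delta}$ of the oracle it exhibits one explicit competitor $m^{\prime}$ with the same total dimension and strictly smaller empirical risk (the gain $(\beta_1-\beta_2)(D_{m^{\prime},1}-D_{m,1})/n \gtrsim n^{-2/3}$ beating the $(\ln n)^{-1/2}n^{-2/3}$ fluctuations), so near-oracle models simply cannot belong to $\Mdim$. Your route buys a more transparent geometric picture (oracle off the ERM curve), but it is more demanding at exactly the step you flag as hardest: you need fluctuations of the empirical criterion ($\approx (\ln n)^2 n^{-5/6}$ per model, via Proposition~\ref{VFCV.pro.conc.penid} and Lemma~\ref{VFCV.le.conc.delta.borne}) to be small relative to the curvature of the criterion along each line $D_1+D_2=D$ over displacements of a fixed fraction of $D^{\star}\sim n^{1/3}$ (a change of order $n^{-2/3}$), which does go through, whereas the paper only needs the coarse $(\ln n)^{-1/2}$ relative control of Claim~\ref{claim:control}.

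One caveat in your deterministic step: the parenthetical claim that a ``short case analysis'' yields a gap depending only on $r=\carre{\siga}/\carre{\sigb}$ and not on $B_a/B_b$ is not true for a general regression function. If $B_a/B_b\to\infty$ (with $\carre{\siga}>\carre{\sigb}$ fixed), the constrained minimizer only displaces the coordinate $v=D_2/D_2^{\star}$ by an order-one amount, and that coordinate contributes a vanishing fraction $B/(A+B)$ of the risk, so the relative gap along the ERM curve tends to $0$; the constant must then also involve $B_a/B_b$. This is harmless for the theorem as stated, because the data model there is $Y_i=X_i+\sigma(X_i)\varepsilon_i$, i.e.\ $\bayes(x)=x$ and $B_a=B_b$; the paper's own constant $\eta_{\Delta}$ (through the factor $1+(\beta_1/\beta_2)^{2/3}$ in Claim~\ref{claim:gooddim}) implicitly relies on $\alpha_1=\alpha_2$ in exactly the same way. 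So either invoke $\bayes(x)=x$ explicitly at that point, or accept a constant depending additionally on $B_a/B_b$. Also note that the ``extreme'' cases to be excluded are not only extreme total dimensions $D$ but extreme allocations at moderate $D$ (e.g.\ $D_{m,1}\leq(\ln n)^6$), which is what the paper's Claim~\ref{claim:smallmod} handles; your bias-dominates argument covers them, but state it for $\min\set{D_{m,1},D_{m,2}}$ rather than for $D$ alone.
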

Theorem~\ref{th.shape} is proved in Section~\ref{sec.proof.main}.
\begin{remark}
\begin{enumerate}
\item The right-hand side of \eqref{eq.shape} is of order $n^{-2/3}\,$. Hence, no oracle inequality \eqref{eq.oracle} for $\mh_F$ can be proved with a constant $\KoracleGenerique$ tending to one when $n$ tends to infinity and a remainder term $R_n \ll n^{-2/3}\,$.
\item Results similar to Theorem~\ref{th.shape} can be proved similarly with other model collections (such as the nonregular ones defined Section~\ref{sec.cadre.collec}) and with unbounded noises (thanks to concentration inequalities proved in \cite{Arl:2009:RP}). The choice $\M_n=\Mdeuxpas_n$ in the statement of Theorem~\ref{th.shape} only intends to keep the proof as simple as possible.
\end{enumerate}
\end{remark}
Theorem~\ref{th.shape} is a quite strong result, implying that no dimensionality-based penalization procedure can satisfy an oracle inequality with leading constant $1 \leq \KoracleGenerique < \KThmDimFAILoracle \,$, even a procedure using the knowledge of $\bayes$ and $\sigma\,$! The proof of Theorem~\ref{th.shape} even shows that the {\em ideal dimensionality-based model selection procedure}, defined by  
\begin{equation}
\label{eq.middim}
\middim \in \mh\paren{\Did} \qquad \mbox{where} \qquad \Did \in \argmin_{D\in\Dimset_n} \set{ \perte{ \ERM_{\mh(D)}}} \enspace ,
\end{equation}
is suboptimal with a large probability. 

The combination of Theorems~\ref{th.penVF+RP} and~\ref{th.shape} shows that when data are heteroscedastic, the price to pay for using a dimensionality-based model selection procedure instead of some resampling-based penalty is (at least) an increase of the quadratic risk by some multiplying factor $\KThmDimFAILoracle>1$ (except maybe for small sample sizes). 
Therefore, the computational cost of resampling has its counterpart in the quadratic risk.
Empirical evidence for the same phenomenon in the context of multiple change-points detection can be found in \cite{Arl_Cel:2009:segm}.

\subsection{Illustration of Theorem~\ref{th.shape}} \label{sec.main.subopt.illus}
Let us illustrate Theorem~\ref{th.shape} and its proof with a simulation experiment, called `X1--005': 
The model collection is $\paren{S_m}_{m\in\Mdeuxpas_n}$, and data $(X_i,Y_i)_{1\leq i \leq n}$ are generated according to \eqref{HP.eq.donnees.reg} with $\bayes(x)=x\,$, $\sigma(x)=(1+ 19 \times \un_{x  \leq 1/2 })/20\,$, $X_i \sim \mathcal{U}([0,1])\,$, $\varepsilon_i \sim \mathcal{N}(0,1)$ and $n=200$ data points.
An example of such data sample is plotted on the left of Figure~\ref{fig.interet.Mdeuxpas}, together with the oracle estimator $\ERM_{\mo}\,$.

Then, as remarked previously from \eqref{LL.eq.Epenid}, the ideal penalty is clearly not a function of the dimensionality (Figure~\ref{fig.X1-005.penid+path} left).
\begin{figure}
\begin{minipage}[b]{.48\linewidth}
\includegraphics[width=\textwidth]{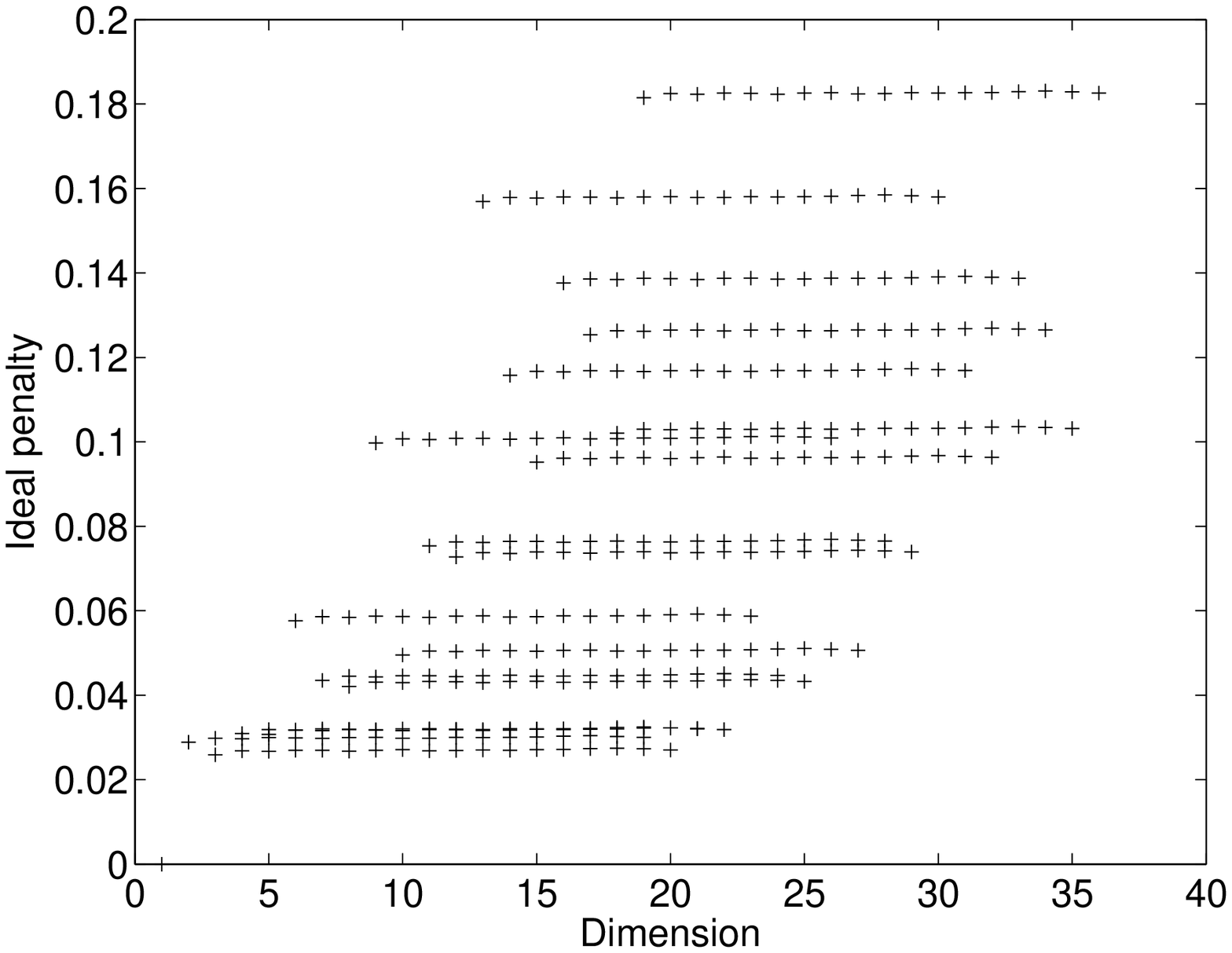}
\end{minipage}
\begin{minipage}[b]{.48\linewidth}
\includegraphics[width=\textwidth]{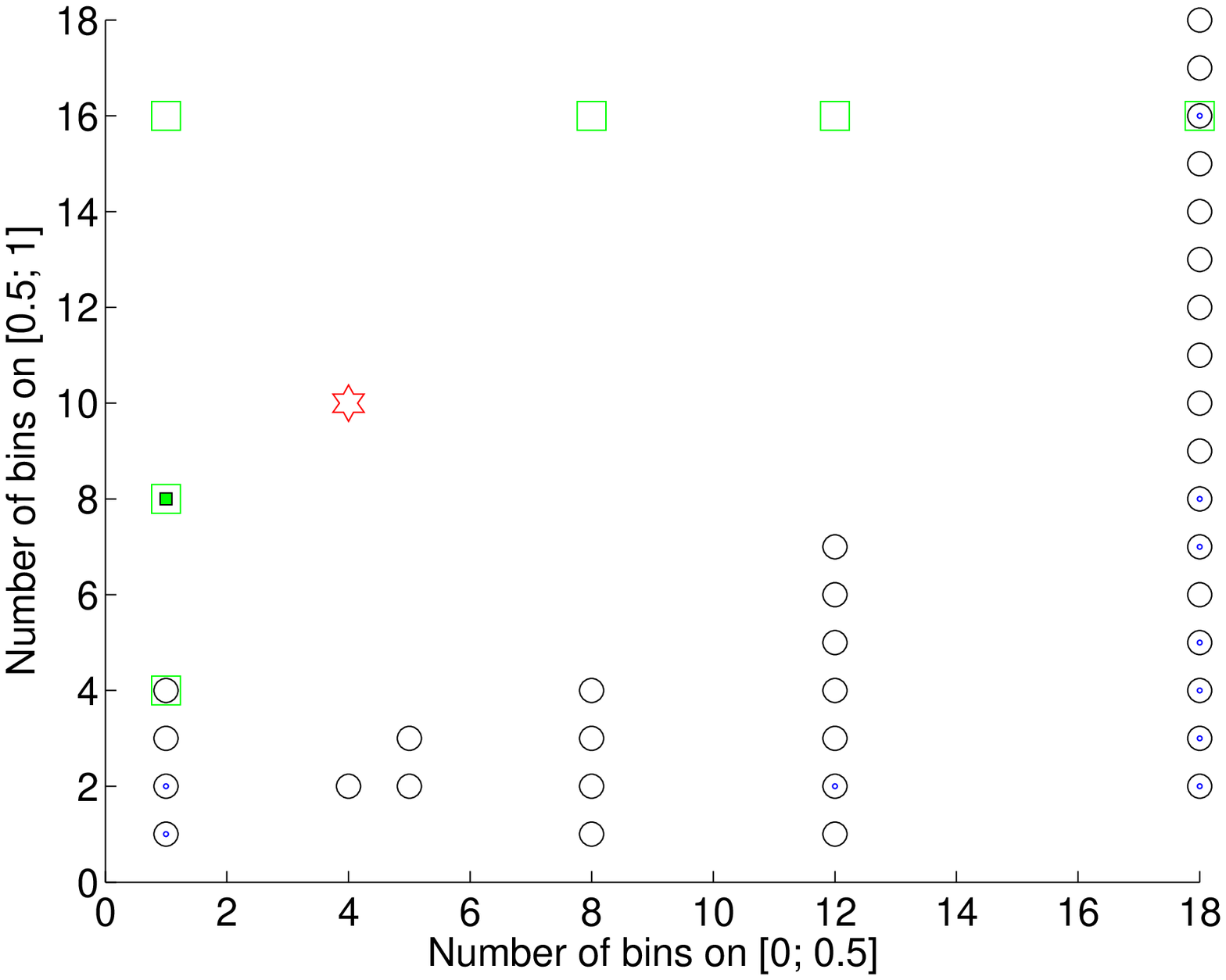}
\end{minipage}
\caption{ \label{fig.X1-005.penid+path}
Experiment X1--005.
Left: 
Ideal penalty \latin{vs.} $D_m$ for one particular sample. A similar picture holds in expectation. 
Right: 
The path of models that can be selected with penalties proportional to penLoo (green squares; the small square corresponds to pen=penLoo) is closest to the oracle (red star) than the path of models that can be selected with a dimensionality-based penalty (black circles; blue points correspond to linear penalties).
}

\end{figure}
According to \eqref{LL.eq.Epenid}, the right penalty is not proportional to $D_m$ but to $D_{m,1} \int_0^{1/2} \sigma^2(x) dx + D_{m,2} \int_{1/2}^1 \sigma^2(x) dx\,$.
The consequence of this fact is that any $m \in \Mdim$ is far from the oracle, as shown by the right of Figure~\ref{fig.X1-005.penid+path}. Indeed, minimizing the empirical risk over models of a given dimension $D$ leads to put more bins where the noise-level is larger, that is, to overfit locally (see also \cite{Arl_Cel:2009:segm} for a deeeper experimental study of this local overfitting phenomenon). 
Furthermore, Figure~\ref{fig.th1.path-density} shows that on $10\,000$ samples, $\mo$ is almost always far from $\Mdim\,$, and in particular from $\middim\,$.
\begin{figure}
\begin{minipage}[b]{\linewidth}
\includegraphics[width=\textwidth]{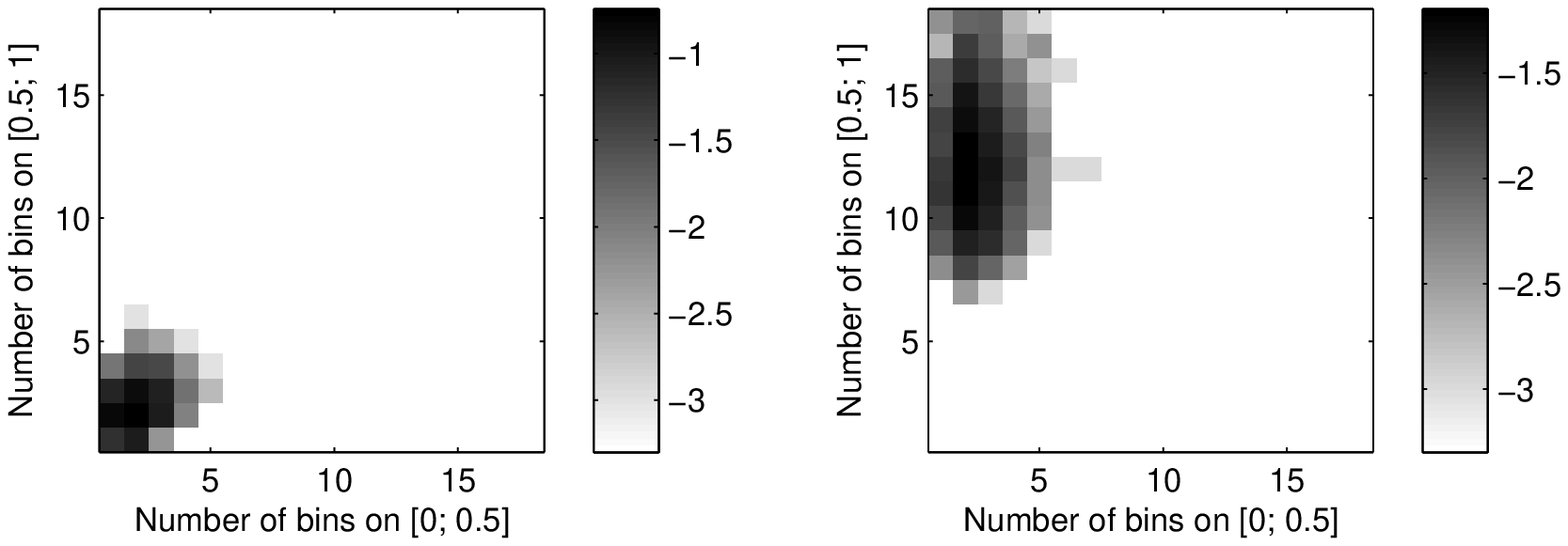}
\end{minipage}
\caption{Experiment \CadreEx. 
Left: $\log_{10} \Prob\paren{ m = \midlin}$ represented in $\R^2$ using $(D_{m,1},D_{m,2})$ as coordinates, where $\middim$ is defined by \eqref{eq.middim}; $N=10\,000$ samples have been simulated for estimating the probabilities.
Right: $\log_{10} \Prob\paren{ m = \mo}$ using the same representation and the same $N=10\,000$ samples. 
\label{fig.th1.path-density}
}
\end{figure}
On the contrary, using a resampling-based penalty (possibly multiplied by some factor $\Cov>0$) leads to avoid overfitting, and to select a model much closer to the oracle (see Figure~\ref{fig.X1-005.penid+path} right).

\subsection{Performance of linear penalties} \label{sec.main.subopt.linear}
Let us now focus on the most classical dimensionality-based penalties, that is, ``linear penalties'' of the form 
\begin{equation} \label{eq.linearpen} \pen(m) = K D_m \end{equation}
where $K>0$ can be data-dependent, but does not depend on $m$.
The first result of this subsection is that linear penalties satisfy an oracle inequality \eqref{eq.oracle} (with leading constant $\KoracleGenerique>1$) provided the constant $K$ in \eqref{eq.linearpen} is large enough. 
\begin{proposition} \label{pro.oracle.lin.Klarge}
Assume that \hypHist\ holds true. 
Then, if 
\[ \forall \mM_n \, , \quad \pen(m) = \frac{K D_m}{n} \qquad \mbox{with} \quad K > \norm{\sigma}_{\infty}^2 \enspace , \]
constants $\KProLinOKproba,\KProLinOKoracle>0$ exist such that with probability at least $1 - \KProLinOKproba n^{-2} \,$, 
\begin{equation} \label{eq.pro.oracle.lin.Klarge}
\perte{\ERM_{\mh}} \leq \KProLinOKoracle \inf_{\mM_n} \set{ \perte{\ERM_m} }
\enspace .
\end{equation}
The constant $\KProLinOKproba$ may depend on all the constants appearing in the assumptions \textup{(}that is, $\cM$, $\aM$, $c_{\mathrm{rich}}$, $A$, $\sigmin$, $\cbiasmaj$, $\cbiasmin$, $\betamin$, $\betamaj$, $\crXl$ and $K$; assuming $K \geq 2 \norm{\sigma}_{\infty}^2$, $\KProLinOKproba$ does not depend on $K$\textup{)}.
The constant $\KProLinOKoracle$ may only depend on $K$, $\sigmin$ and $\norm{\sigma}_{\infty}$; when $K \geq 2 \norm{\sigma}_{\infty}^2$, $\KProLinOKoracle$ can be made as close as desired to $K \sigmin^{-2} - 1$ at the price of enlarging $\KProLinOKproba\,$.
\end{proposition}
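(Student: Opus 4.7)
The starting point is the standard inequality obtained from $\mh$ minimizing the penalized empirical criterion: for every $\mM_n$,
\[
\perte{\ERM_{\mh}} \leq \perte{\ERM_m} + \bigl[\pen(m) - \penid(m)\bigr] + \bigl[\penid(\mh) - \pen(\mh)\bigr].
\]
Specializing to $m=\mo$, it suffices to bound (i) $\pen(\mo) - \penid(\mo)$ from above by a bounded multiple of $\perte{\ERM_{\mo}}$, and (ii) $\penid(\mh) - \pen(\mh) \leq c_1 \perte{\ERM_{\mh}} + r_n$ with $c_1 < 1$, so that the latter can be absorbed into the left-hand side.

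The key algebraic identity, obtained by expanding the squared loss and using that $P[(Y - s_m)(\ERM_m - s_m)] = 0$ (since $s_m$ is the $L^2(P)$-projection onto $S_m$) while $P_n[(Y - s_m)(\ERM_m - s_m)] = \norm{\ERM_m - s_m}_n^2$ (explicit computation for histograms), is
\[
\penid(m) = (P - P_n)\gamma(s_m) + \norm{\ERM_m - s_m}^2 + \norm{\ERM_m - s_m}_n^2.
\]
I would then assemble a \emph{good event} on which, uniformly in $\mM_n$ (union bound via the polynomial complexity \hypPpoly): $n_\lambda$ concentrates near $np_\lambda$ (via \hypArXl), both squared norms lie within a $1\pm\epsilon$ factor of the common expectation $n^{-1}\sum_\lambda(\sigla^2+\sigld^2)$ (chi-square-type concentration using \hypAb), and $\abs{(P - P_n)\gamma(s_m)} \leq \epsilon\,\perte{s_m} + r_n$ (Bernstein for the bounded function $\gamma(s_m)$). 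The concentration inequalities of Section~\ref{sec.proof.conc} supply these ingredients, and the good event holds with probability at least $1 - \KProLinOKproba n^{-2}$.

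On the good event, combined with $\sigla^2 \in [\sigmin^2, \norm{\sigma}_\infty^2]$ via \hypAn, the identity yields the sandwich
\[
2(1 - \epsilon)\sigmin^2 D_m/n - r_n \,\leq\, \penid(m) \,\leq\, 2(1 + \epsilon)\norm{\sigma}_\infty^2 D_m/n + 2(1 + \epsilon) n^{-1}\!\sum_\lambda \sigld^2 + r_n,
\]
together with $D_m/n \leq \perte{\ERM_m}/[\sigmin^2(1-\epsilon)]$ via Pythagoras $\norm{\ERM_m - s_m}^2 \leq \perte{\ERM_m}$ and the lower concentration. Applied to $m=\mo$ this yields $\pen(\mo) - \penid(\mo) \leq (K\sigmin^{-2} - 2 + o(1))\,\perte{\ERM_{\mo}} + r_n$. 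For $\penid(\mh) - \pen(\mh)$, the upper bound combined with \hypArXl\ (giving $\sum_\lambda \sigld^2(\mh) \leq D_{\mh}\,\perte{s_{\mh}}/\crXl$) and the linearization $\perte{s_{\mh}} \leq \perte{\ERM_{\mh}} \leq 4A^2$ (from Pythagoras and \hypAb) produces
\[
\penid(\mh) - \pen(\mh) \,\leq\, \frac{(1 + \epsilon)\bigl[2(1 + \epsilon)\norm{\sigma}_\infty^2 - K\bigr]_{+}}{\sigmin^2}\,\perte{\ERM_{\mh}} + \epsilon''\,\perte{\ERM_{\mh}} + r_n,
\]
where $\epsilon''$ absorbs the bias contribution and can be shrunk by tightening $\epsilon$ at the cost of enlarging $\KProLinOKproba$. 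For $K \geq 2\norm{\sigma}_\infty^2$ the $(\cdot)_+$ part vanishes for small enough $\epsilon$, so the coefficient $c_1$ can be made arbitrarily small. Combining the two bounds gives $(1 - c_1)\perte{\ERM_{\mh}} \leq (K\sigmin^{-2} - 1 + o(1))\perte{\ERM_{\mo}} + r_n$, which is \eqref{eq.pro.oracle.lin.Klarge} with $\KProLinOKoracle = (K\sigmin^{-2} - 1)/(1 - c_1) + o(1)$; when $K \geq 2\norm{\sigma}_\infty^2$, $c_1 \downarrow 0$ gives $\KProLinOKoracle \downarrow K\sigmin^{-2} - 1$ as announced.

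The main obstacle is the intermediate regime $\norm{\sigma}_\infty^2 < K < 2\norm{\sigma}_\infty^2$, where $[2\norm{\sigma}_\infty^2 - K]_{+}$ is positive and the naive bound yields $c_1 < 1$ only under the stronger condition $K > 2\norm{\sigma}_\infty^2 - \sigmin^2$. Pushing down to the announced threshold $K > \norm{\sigma}_\infty^2$ requires finer bin-wise bookkeeping---comparing $\penid(m)-\pen(m)$ to $\perte{\ERM_m}$ per bin rather than globally, and exploiting \hypAp\ together with the boundedness \hypAb\ to absorb the bias linearly---yielding a finite but potentially large $\KProLinOKoracle$ depending on $K$, $\sigmin$ and $\norm{\sigma}_\infty$, which is precisely the content of the proposition's first sentence.
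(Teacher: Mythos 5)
Your argument is fine in the regime $K \geq 2\norm{\sigma}_{\infty}^2$ (more generally $K > 2\norm{\sigma}_{\infty}^2 - \sigmin^2$) and does recover the constant close to $K\sigmin^{-2}-1$ there, but the proposition claims the inequality for \emph{every} $K>\norm{\sigma}_{\infty}^2$, and your last paragraph does not prove it on the range $\norm{\sigma}_{\infty}^2 < K \leq 2\norm{\sigma}_{\infty}^2-\sigmin^2$: it merely asserts that ``finer bin-wise bookkeeping'' absorbing the bias via \hypAp\ and \hypAb\ would work. That diagnosis is off, because the bias is not what blocks you (its contribution to $\E\croch{p_2(m)}$ is negligible relative to $\perte{\bayes_m}$ by \hypArXl, and it only adds to the risk). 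What pushes your absorption coefficient to $1$ is the variance part: you bound the overshoot $\penid(\mh)-\pen(\mh)\approx 2\E\croch{p_2(\mh)}-K D_{\mh}/n$ using $\E\croch{p_2(\mh)}\leq \norm{\sigma}_{\infty}^2 D_{\mh}/n+\cdots$, and then convert $D_{\mh}/n$ back into risk via the opposite bound $\E\croch{p_2(\mh)}\geq \sigmin^2 D_{\mh}/n$; using the two extreme bounds on the same quantity is exactly what creates the spurious threshold. The fix is to keep $\E\croch{p_2(m)}$ itself (equivalently the per-bin $\sigl^2$, whose variance part satisfies $\carre{\sigla}\leq\norm{\sigma}_{\infty}^2<K$) as the comparison quantity: then $2\E\croch{p_2(m)}-K D_m/n \leq \paren{2-K\norm{\sigma}_{\infty}^{-2}}\E\croch{p_2(m)}+(\mbox{bias remainder})$ with $2-K\norm{\sigma}_{\infty}^{-2}<1$, and $\E\croch{p_2(\mh)}\approx p_1(\mh)\leq \perte{\ERM_{\mh}}$ on the good event, so the absorption works for every $K>\norm{\sigma}_{\infty}^2$, with a constant blowing up as $K\downarrow\norm{\sigma}_{\infty}^2$ as allowed by the statement. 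In other words, the relevant condition is that the penalty exceed the \emph{minimal} penalty $\approx\E\croch{p_1(m)}\approx n^{-1}\sum_{\lamm}\sigl^2$, not that it come within less than one risk-unit of the ideal penalty. Note also that the uniform $1\pm\epsilon$ relative concentration you invoke is not available for small-dimension models (the bounds of Proposition~\ref{VFCV.pro.conc.penid} degrade like $(\ln n)^2 D_m^{-1/2}$), so those models must be treated separately, e.g.\ through the bias lower bound in \hypAp\ or additive remainders.

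For comparison, the paper's proof has a different and much shorter structure: it only checks, from \eqref{eq.Ep2} together with \hypAp\ and \hypArXl, the sandwich $c_1(K,n)\,\E\croch{p_2(m)} \leq \pen(m) \leq c_2\,\E\croch{p_2(m)}$ with $c_2=K\sigmin^{-2}$ and $c_1(K,n)\geq \frac{1}{2}\paren{1+K\norm{\sigma}_{\infty}^{-2}}>1$ for $n$ large, and then invokes the general minimal-penalty result (Theorem~5 of \cite{Arl_Mas:2009:pente}), which yields the leading constant $\paren{1+\paren{K\sigmin^{-2}-2}_{+}}/\min\set{1,\frac{1}{2}\paren{K\norm{\sigma}_{\infty}^{-2}-1}}$ with probability at least $1-\LProA n^{-2}$, and hence also the refinement for $K\geq 2\norm{\sigma}_{\infty}^2$. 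So either establish your sandwich and cite that theorem, or rerun your absorption with the corrected comparison above; as written, the intermediate regime---which is the genuinely delicate content of the proposition---remains unproven.
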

Proposition~\ref{pro.oracle.lin.Klarge} is proved in Section~\ref{sec.proof.pro.oracle.lin.Klarge}.
As a consequence of Proposition~\ref{pro.oracle.lin.Klarge}, if we can afford loosing a constant factor of order $\norm{\sigma}_{\infty}^2/\sigmin^2$ in the quadratic risk, a relevant (and computationally cheap) strategy is the following: 
First, estimate an upper bound on $\norm{\sigma}_{\infty}^2\,$. 
Second, plug it into the penalty $\pen(m) = K^{\prime} \norm{\sigma}_{\infty}^2 D_m / n\,$, where $K^{\prime}>1$ remains to be chosen. 
According to the proof of Proposition~\ref{pro.oracle.lin.Klarge} and previous results in the homoscedastic case, $K^{\prime}=2$ should be a good choice in general.
Let us now add a few comments.
\begin{remark}
\begin{enumerate}
\item The assumption set \hypHist\ is discussed in Remark~\ref{rk.hypHist} in Section~\ref{sec.urep.RP}, and can be relaxed in various ways, see \cite{Arl:2009:RP}.
\item Proposition~\ref{pro.oracle.lin.Klarge} can be generalized to other models than sets of piecewise constant functions. Indeed, the keystone of the proof of Proposition~\ref{pro.oracle.lin.Klarge} is that $2 \sigmin^2 \leq n D_m^{-1} \E\croch{\penid(m)} \leq 2 \norm{\sigma}_{\infty}^2$, which holds for instance when the design is fixed and the models $S_m$ are finite dimensional vector spaces. 
Therefore, using arguments similar to the ones of \cite{Bar_Bir_Mas:1999,Bar:2000} for instance, an oracle inequality like \eqref{eq.pro.oracle.lin.Klarge} can be proved when models are general vector spaces, assuming that the design $(X_i)_{1 \leq i \leq n}$ is deterministic. 
\end{enumerate}
\end{remark}

\medskip

The second result of this subsection shows that the condition $K>\norm{\sigma}_{\infty}^2/n$ in Proposition~\ref{pro.oracle.lin.Klarge} really prevents from strong overfitting. In particular, some example can be built where $C_p$ strongly overfits.
\begin{proposition} \label{pro.overfit.Mal}
Let us consider the framework of Section~\ref{HP.sec.regression} with $\X = [0, 1]$, $\M_n = \Mdeuxpas_n \,$ with maximal dimension $M_n = \sfloor{n/(\ln(n))}$, and assume that:
\begin{itemize}
\item \hypAb\ and \hypAn\ hold true \textup{(}see the definition of \hypHist\textup{)}, 
\item $\bayes \in \mathcal{H}(\alpha,R)$, that is, $\forall x_1, x_2 \in \X$, $\absj{\bayes(x_2) - \bayes(x_1)} \leq R \absj{x_2 - x_1}^{\alpha} \,$, for some $R>0$ and $\alpha \in (0,1]\,$,
\item $\mu = \P(X \in [0,1/2]) \in (0,1) \, $,
\item conditionally on $\set{ X \in [0,1/2] }$, $X$ has a density w.r.t. the Lebesgue measure which is lower bounded by $c_{X,\Leb}>0 \,$.
\end{itemize}
If in addition $\forall\mM_n \, $, $\pen(m) = K D_m / n$ with $K < \inf_{t \in [0,1/2]} \set{ \sigma(t)^2 } \,$, 
then constants $\KProCpFAILproba, \KProCpFAILdim, \KProCpFAILoracle>0$ exist such that with probability at least $1 - \KProCpFAILproba n^{-2}$, 
\begin{gather} \label{eq.pro.overfit.Mal.dim}
D_{\mh} \geq D_{\mh,1} \geq \frac{ \KProCpFAILdim n } { \ln(n) } \\ \label{eq.pro.overfit.Mal.risk}
\mbox{and} \quad 
\perte{\ERM_{\mh}} \geq \frac{\KProCpFAILoracle}{\paren{ \ln(n) }^2} \geq \ln(n) \inf_{\mM_n} \set{ \perte{\ERM_m} } \enspace . 
\end{gather}
The constants $\KProCpFAILproba$, $\KProCpFAILdim$, $\KProCpFAILoracle$ may depend on $A$, $\sigmin$, $\alpha$, $R$, $\mu$, $c_{X,\Leb}$, $\inf_{[0,1/2]} \set{\sigma^2}$ and $K$, but they do not depend on $n$.
\end{proposition}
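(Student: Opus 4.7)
The plan is to exploit the product structure of $\Mdeuxpas_n$: since the empirical risk decomposes additively over $[0,1/2)$ and $[1/2,1]$ and $\pen(m)=K(D_{m,1}+D_{m,2})/n$ is likewise additive, minimization of the penalized criterion over the non-constant part of $\Mdeuxpas_n$ separates into two independent minimizations. It therefore suffices to show that, on an event of probability at least $1-\KProCpFAILproba n^{-2}$, the $[0,1/2]$-minimizer $D_{\mh,1}$ of $R_1(D_1)+KD_1/n$ over $D_1\in\{1,\ldots,M_n/2\}$ satisfies $D_{\mh,1}\geq\KProCpFAILdim n/\ln(n)$, where $R_1(D_1)$ denotes the empirical risk restricted to the data with $X_i\in[0,1/2)$ under the regular $D_1$-bin partition, and then to rule out the constant model $\{1\}$ separately.

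The core step is a one-step refinement inequality. Whenever $2D_1\leq M_n/2$, the regular $2D_1$-bin partition refines the $D_1$-bin one, and the classical regressogram identity yields
\[
R_1(D_1)-R_1(2D_1)=\frac{1}{n}\sum_{\lambda}\frac{n_{\lambda_1}n_{\lambda_2}}{n_\lambda}\bigl(\bar Y_{\lambda_1}-\bar Y_{\lambda_2}\bigr)^2,
\]
where the sum runs over the $D_1$ parent bins $\lambda$ with children $\lambda_1,\lambda_2$. Conditionally on $(X_i)$, each summand has expectation $\frac{n_{\lambda_2}}{n_\lambda n_{\lambda_1}}\sum_{X_i\in\lambda_1}\sigma^2(X_i)+\frac{n_{\lambda_1}}{n_\lambda n_{\lambda_2}}\sum_{X_i\in\lambda_2}\sigma^2(X_i)+\frac{n_{\lambda_1}n_{\lambda_2}}{n_\lambda}(\bar s_{\lambda_1}-\bar s_{\lambda_2})^2$, which, on the event that the bin counts are balanced ($n_{\lambda_j}$ close to $np_{\lambda_j}$, controlled by a Chernoff bound using the density lower bound $c_{X,\Leb}$), is at least $(1-o(1))\inf_{[0,1/2]}\sigma^2$. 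Summing, $\E[R_1(D_1)-R_1(2D_1)\mid(X_i)]\geq(1-o(1))(D_1/n)\inf_{[0,1/2]}\sigma^2$, which strictly dominates the penalty increment $KD_1/n$ thanks to the gap $\inf_{[0,1/2]}\sigma^2-K>0$.

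To make this uniform, I would apply a Bernstein-type deviation inequality to the above sum of $D_1$ conditionally independent bounded terms (each bounded by $n_\lambda A^2$ thanks to \hypAb), combined with a union bound over the $\grandO(\ln n)$ dyadic scales $D_1\in\{1,2,4,\ldots,M_n/4\}$. On the resulting event, every dyadic refinement strictly decreases the penalized criterion on each side; choosing $\KProCpFAILdim\leq 1/4$ so that $2\KProCpFAILdim n/\ln(n)\leq M_n/2$, for every $D_1<\KProCpFAILdim n/\ln(n)$ the value $2D_1$ yields a strictly smaller criterion, so the minimizer satisfies $D_{\mh,1}\geq\KProCpFAILdim n/\ln(n)$. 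The constant model $\{1\}$ is ruled out by an analogous direct comparison with $(\lfloor M_n/2\rfloor,\lfloor M_n/2\rfloor)$, in which a telescoping of refinement decreases from $(1,1)$ beats the total penalty cost.

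For \eqref{eq.pro.overfit.Mal.risk}, I would lower bound the loss by its variance component on $[0,1/2]$: $\perte{\ERM_{\mh}}\geq\sum_{\lambda\subset[0,1/2]}p_\lambda(\bar Y_\lambda-\beta_\lambda)^2$ with $\beta_\lambda=\E[\bayes(X)\mid X\in\lambda]$, whose conditional expectation given $(X_i)$ is at least $\sigmin^2\sum_\lambda p_\lambda/n_\lambda\gtrsim\sigmin^2 D_{\mh,1}/n\gtrsim\sigmin^2/\ln(n)$ on the balanced-bin event; a further Bernstein concentration for this sum of conditionally independent squared residuals gives $\perte{\ERM_{\mh}}\geq\KProCpFAILoracle/(\ln n)^2$ for $n$ large. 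For the comparison with the oracle, picking a regular model of dimension $\asymp n^{1/(2\alpha+1)}$ yields $\inf_m\perte{\ERM_m}\lesssim n^{-2\alpha/(2\alpha+1)}\ll(\ln n)^{-3}$. The main technical obstacle is the uniform concentration argument combined with the balanced-bin control: when $D_1$ approaches $n/\ln n$, typical bin counts are only of order $\ln n$, exactly the regime producing the logarithmic losses visible in both~\eqref{eq.pro.overfit.Mal.dim} and~\eqref{eq.pro.overfit.Mal.risk}.
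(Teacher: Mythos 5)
Your overall strategy (splitting the additive criterion into the two halves of $[0,1]$, handling the constant model separately, and driving $D_{\mh,1}$ up by comparing empirical-risk decrease against the penalty increment) is in the spirit of the minimal-penalty phenomenon the paper invokes, but the key step as you wrote it has a genuine gap: the chain of one-step dyadic refinements cannot hold uniformly with probability $1-\KProCpFAILproba n^{-2}$. At small scales the comparison fails with \emph{constant} probability: for $D_1=1$, the gain $R_1(1)-R_1(2)=\tfrac{1}{n}\tfrac{n_{\lambda_1}n_{\lambda_2}}{n_\lambda}(\bar Y_{\lambda_1}-\bar Y_{\lambda_2})^2$ is a single chi-square-like variable of mean $\asymp \sigma^2/n$ and fluctuations of the same order, so the event $\set{R_1(1)-R_1(2)>K/n}$ has probability bounded away from $1$; more generally, for $D_1\lesssim \ln n$ the margin $D_1(\inf\sigma^2-K)/n$ is dominated by the stochastic deviation of the refinement gain, so "every dyadic refinement strictly decreases the criterion" is simply not a $1-\grandO(n^{-2})$ event. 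Two further problems compound this: you union-bound only over dyadic scales but then assert the strict decrease for \emph{every} $D_1<\KProCpFAILdim n/\ln n$ (needed, since the minimizer need not be dyadic); and the Bernstein bound with the crude envelope $n_\lambda A^2$ per term gives a lower deviation of order $A^2\sqrt{x/n}$, which exceeds the margin $D_1(\inf\sigma^2-K)/n$ unless $D_1\gtrsim\sqrt{n\ln n}$ --- to get anything useful you must exploit the sub-exponential (squared-average) structure of each term, i.e.\ a per-term variance of order $A^4/n^2$, not the worst-case bound.

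The fix is to make the comparison \emph{global} rather than local, which is exactly what the paper does by reducing, after conditioning on the number $N$ of points in $[0,1/2]$ (controlled by Bernstein), to Theorem~2 of \cite{Arl_Mas:2009:pente}: there one compares $\crit$ at every dimension directly with $\crit$ at the maximal dimension $D^\star\asymp n/\ln n$, where the accumulated advantage $(\inf_{[0,1/2]}\sigma^2-K)D^\star/n\asymp 1/\ln n$ dominates all deviations simultaneously, so no per-step decrease is needed. Within your framework the analogue would be to compare each $D_1$ in one shot with a dyadic multiple $2^kD_1\in[M_n/4,M_n/2]$ (telescoping the nested-refinement identity), and to union-bound over all $D_1\leq M_n/2$; small $D_1$ are then excluded because their criterion cannot be nearly as negative as that of the largest models. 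Your treatment of \eqref{eq.pro.overfit.Mal.risk} (lower bounding the loss by the estimation term $p_1(\mh)$, whose conditional mean is $\gtrsim \sigmin^2 D_{\mh,1}/n$, and comparing with a regular model of dimension $\asymp n^{1/(2\alpha+1)}$ for the oracle) is sound in outline given \eqref{eq.pro.overfit.Mal.dim}, though it too silently requires an upper concentration bound for the excess loss of the comparison model, which the paper gets from its Proposition~\ref{VFCV.pro.conc.penid} and which the paper itself obtains directly from the proof of the cited theorem.
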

Proposition~\ref{pro.overfit.Mal}, which is actually is a corollary of a more general result on minimal penalties---Theorem~2 in \cite{Arl_Mas:2009:pente}---is proved in Section~\ref{sec.proof.pro.overfit.Mal}.

Consider in particular the following example: $X \in [0,1]$ with a density w.r.t. $\Leb$ equal to $2\mu \un_{[0,1/2]} + 2 (1-\mu) \un_{(1/2,1]}$ for some $\mu \in (0,1)$ and $\sigma=\sigma_a \un_{[0,1/2]} + \sigma_b \un_{(1/2,1]}$ for some $\sigma_a \geq \sigma_b > 0 \,$. 
Then, the penalty $K D_m/n$ leads to overfitting as soon as $K < \sigma_a^2 = \norm{\sigma}_{\infty}^2 \, $, which shows that the lower bound $K > \norm{\sigma}_{\infty}^2$ appearing in the proof of Proposition~\ref{pro.oracle.lin.Klarge} cannot be improved in this example. 

Let us now consider $C_p$, that we naturally generalize to the heteroscedastic case by $\penMal(m) = K D_m / n$ with $K = 2 \E\croch{\sigma(X)^2}\,$. In the above example, $K = 2 \mu \sigma_a^2 + 2 (1-\mu) \sigma_b^2 \,$. So, the condition on $K$ in Proposition~\ref{pro.overfit.Mal} can be written
\[ \mu + (1-\mu) \frac{\sigma_b^2}{\sigma_a^2} < \frac{1}{2} \enspace ,  \]
which holds when 
\begin{equation} \label{eq.linepen.Cpoverfit.con} \mu \in (0,1/2) \quad \mbox{and} \quad \frac{\sigma_b^2}{\sigma_a^2} < \frac{\frac{1}{2} - \mu}{1 - \mu} \enspace . \end{equation}
Therefore, when \eqref{eq.linepen.Cpoverfit.con} holds, Proposition~\ref{pro.overfit.Mal} shows that $C_p$ strongly overfits. 

The conclusion of this subsection is that some linear penalties can be used with heteroscedastic data provided that we can estimate $\norm{\sigma}_{\infty}^2$ by some $\widehat{\sigma^2}_{\infty}$ such that $2 \widehat{\sigma^2}_{\infty} > \norm{\sigma}_{\infty}^2$ holds with probability close to 1.
Then the price to pay is an increase of the quadratic risk by a constant factor of order $\max(\sigma^2)/ \min(\sigma^2)$ in general.

\section{Simulation study} \label{sec.simus}
This section intends to compare by a simulation study the finite sample performances of the model selection procedures studied in the previous sections: dimensionality-based and resampling-based procedures.

\subsection{Experiments} 
We consider four experiments, called `X1--005' (as in Section~\ref{sec.main.subopt.illus}), `X1--005$\mu$02', `S0--1' and `XS1--05'. 
Data $(X_i,Y_i)_{1\leq i \leq n}$ are generated according to \eqref{HP.eq.donnees.reg} with $\varepsilon_i \sim \mathcal{N}(0,1)$ and $X_i$ has density w.r.t. $\Leb([0,1])$ of the form $2 \mu \un_{[0,1/2]} + 2 (1-\mu) \un_{(1/2,1]} \,$, where $\mu = \Prob(X_1 \leq 1/2) \in (0,1)\,$.
The functions $\bayes$ and $\sigma\,$, and the values of $n$ and $\mu$, depend on the experiment, see Table~\ref{tab.expts} and Figure~\ref{fig.reg-samples}; in experiment `XS1--05', the regression function is given by 
\begin{equation} \label{eq.bayes.XS1-05}
\bayes(x) = \frac{x}{4} \un_{x \leq 1/2} + \croch{ \frac{1}{8} + \frac{2}{3} \sin\paren{16 \pi x }} \un_{x > 1/2} \enspace .
\end{equation}
In each experiment, $N=10\,000$ independent data samples are generated, and the model collection is $\paren{S_m}_{m\in\Mdeuxpas_n}\,$, with different values of $M_n$ for computational reasons, see Table~\ref{tab.expts}.
The signal-to-noise ratio is rather small in the four experimental settings considered here, and the collection of models is quite large ($\card(\M_n) = 1 + (M_n/2)^2\,$). Therefore, we can expect overpenalization to be necessary (see Section~6.3.2 of \cite{Arl:2009:RP} for more details on overpenalization).

\begin{table} 
\caption{Parameters of the four experiments. \label{tab.expts}}
\begin{center}
\begin{tabular}
{p{0.14\textwidth}@{\hspace{0.025\textwidth}}p{0.20\textwidth}@{\hspace{0.025\textwidth}}p{0.18\textwidth}@{\hspace{0.025\textwidth}}p{0.18\textwidth}@{\hspace{0.025\textwidth}}p{0.20\textwidth}}
\hline\noalign{\smallskip}
Experiment      & X1--005                      & S0--1              & XS1--05           & X1--005$\mu$02     \\
\noalign{\smallskip}
\hline
\noalign{\smallskip}
$\bayes(x)$     & $x$                          & $\sin(\pi x)$      & see Eq.~\eqref{eq.bayes.XS1-05} & $x$ \\
$\sigma(x)$     & $(1+19\times\un_{x\leq 1/2})/20$ & $\un_{x  > 1/2}$ & $(1+\un_{x  \leq 1/2})/2$ & $(1+19\times \un_{x\leq 1/2})/20$  \\
$n$             & $200$                        & $200$              & $500$                 & $1000$ \\
$\Prob(X\leq 1/2)$ & $1/2$                     & $1/2$              & $1/2$                 & $1/5$ \\
$M_n$           & $\sfloor{n/(\ln(n))}=37$     & $\sfloor{n/(\ln(n))}=37$ & $\sfloor{n/(\ln(n))}=80$ & $\sfloor{n/(\ln(n))^2}=20$ \\
\hline
\end{tabular} 
\end{center}
\end{table}

\begin{figure}
\begin{center}
\begin{minipage}[b]{.46\linewidth}
\includegraphics[width=\textwidth]{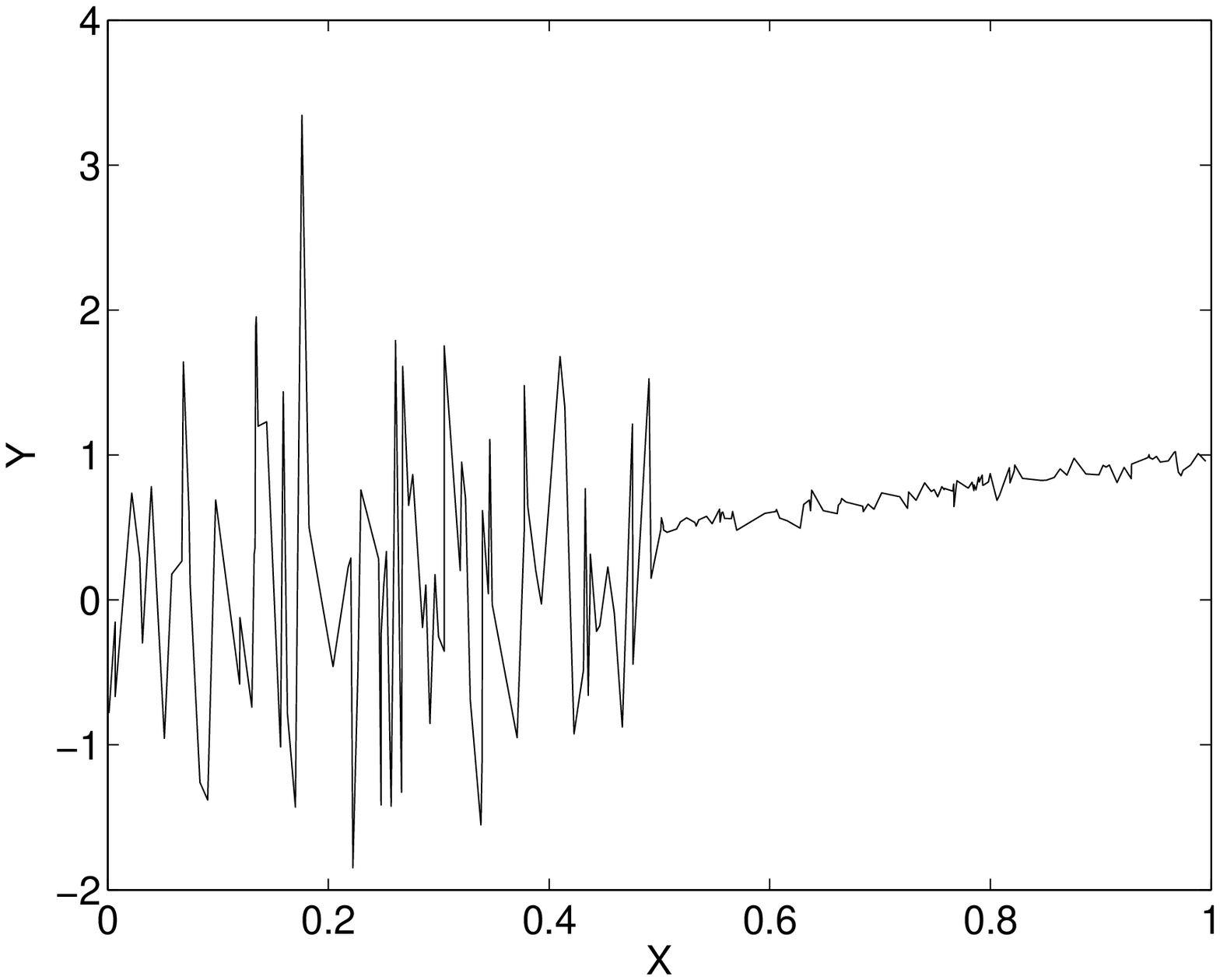}
\end{minipage}
\begin{minipage}[b]{.46\linewidth}
\includegraphics[width=\textwidth]{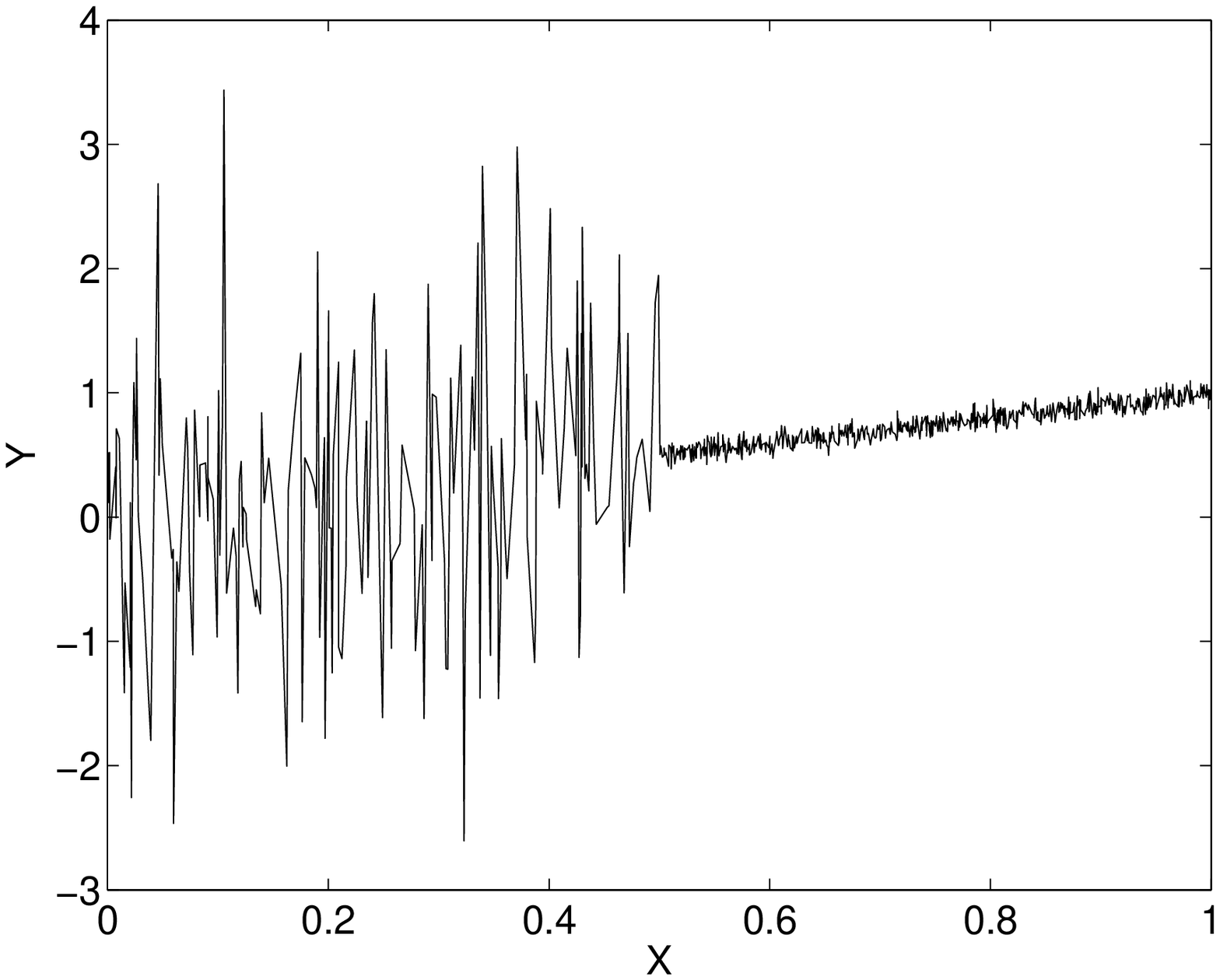}
\end{minipage} \hfill \\
Experiments X1--005 (left) and X1--005$\mu$02 (right): one data sample \vspace{0.2cm} \\
\begin{minipage}[b]{.46\linewidth}
\includegraphics[width=\textwidth]{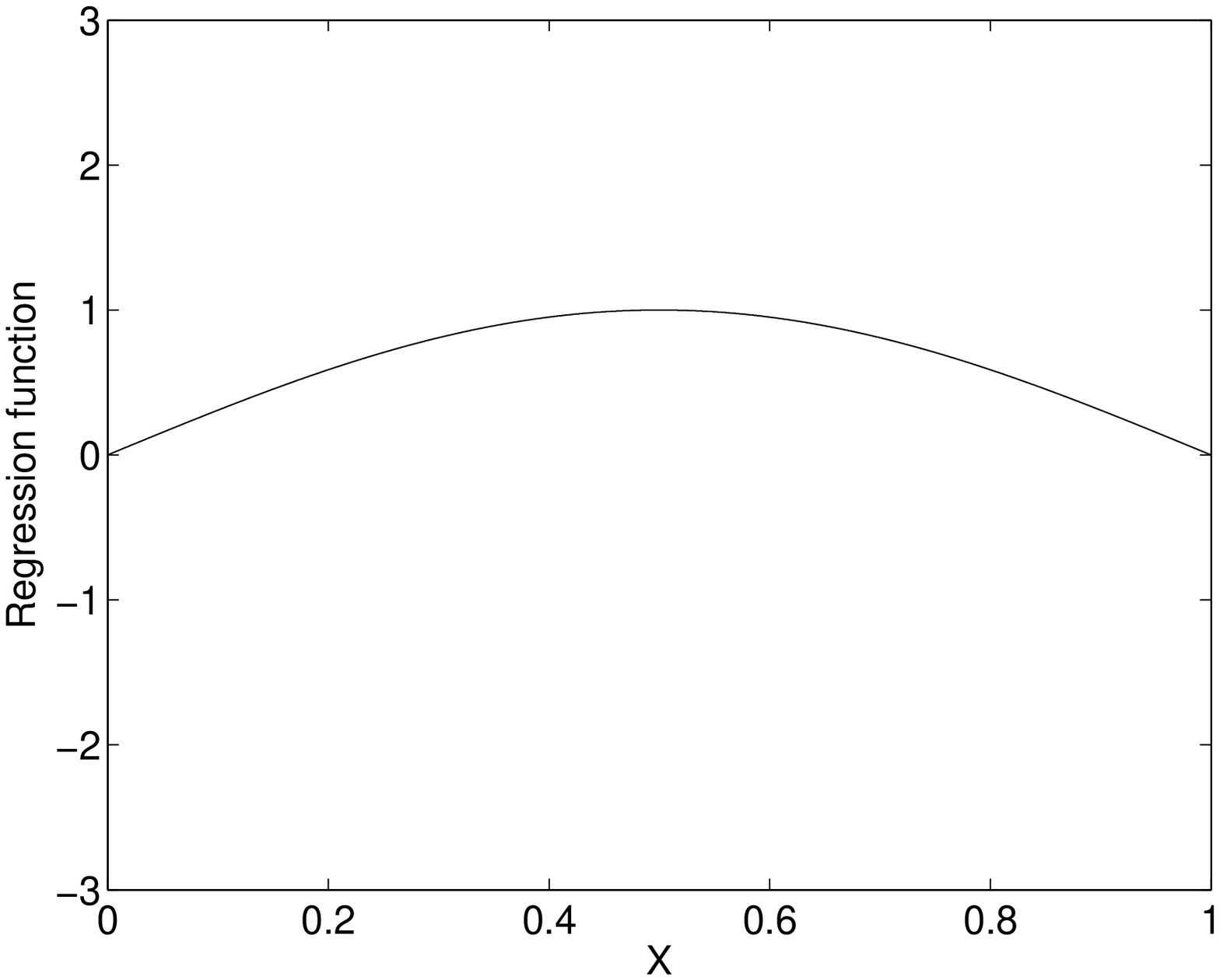}
\end{minipage}
\begin{minipage}[b]{.46\linewidth}
\includegraphics[width=\textwidth]{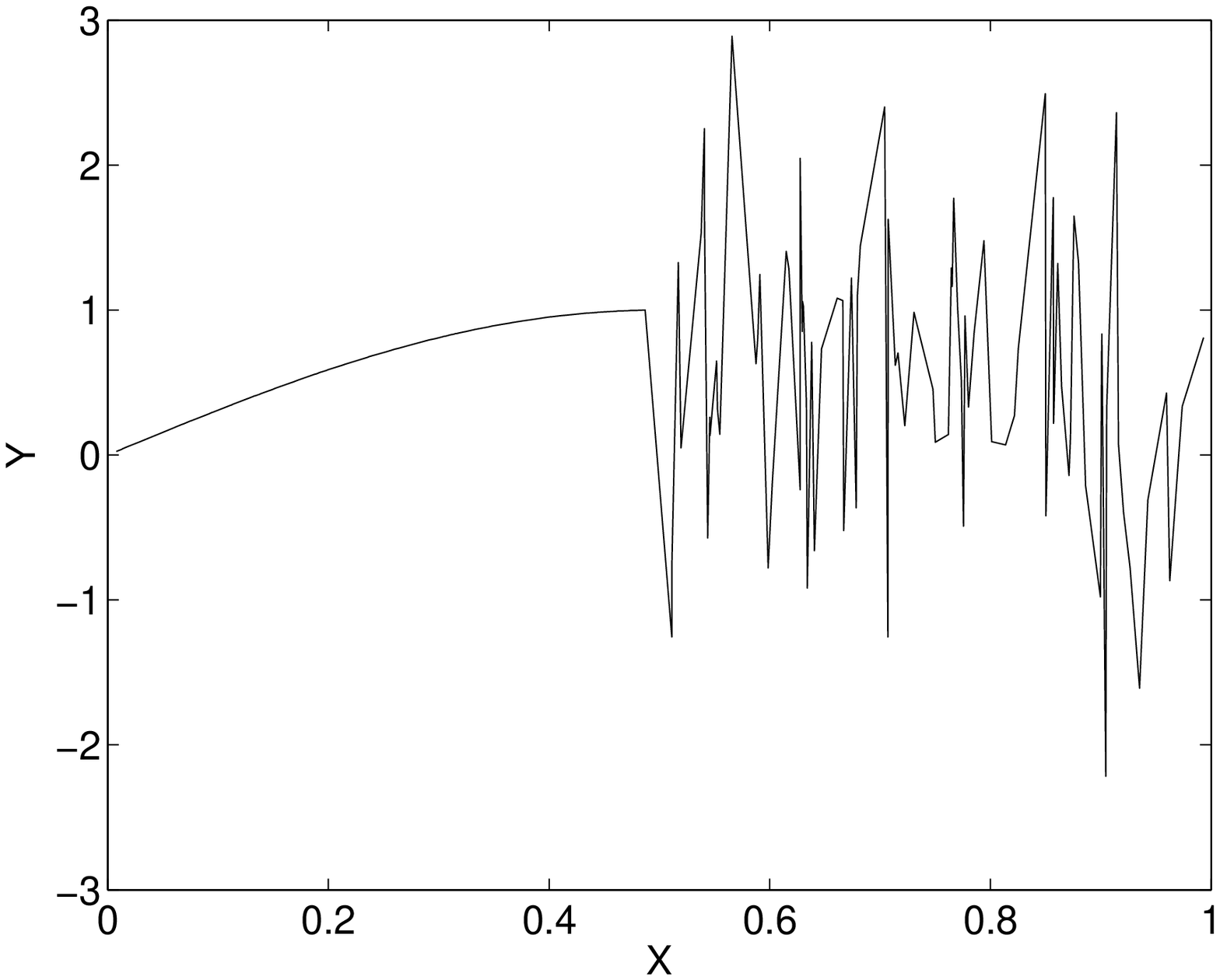}
\end{minipage} \hfill \\
Experiment S0--1 (left: regression function; right: one data sample)\vspace{0.2cm} \\
\begin{minipage}[b]{.46\linewidth}
\includegraphics[width=\textwidth]{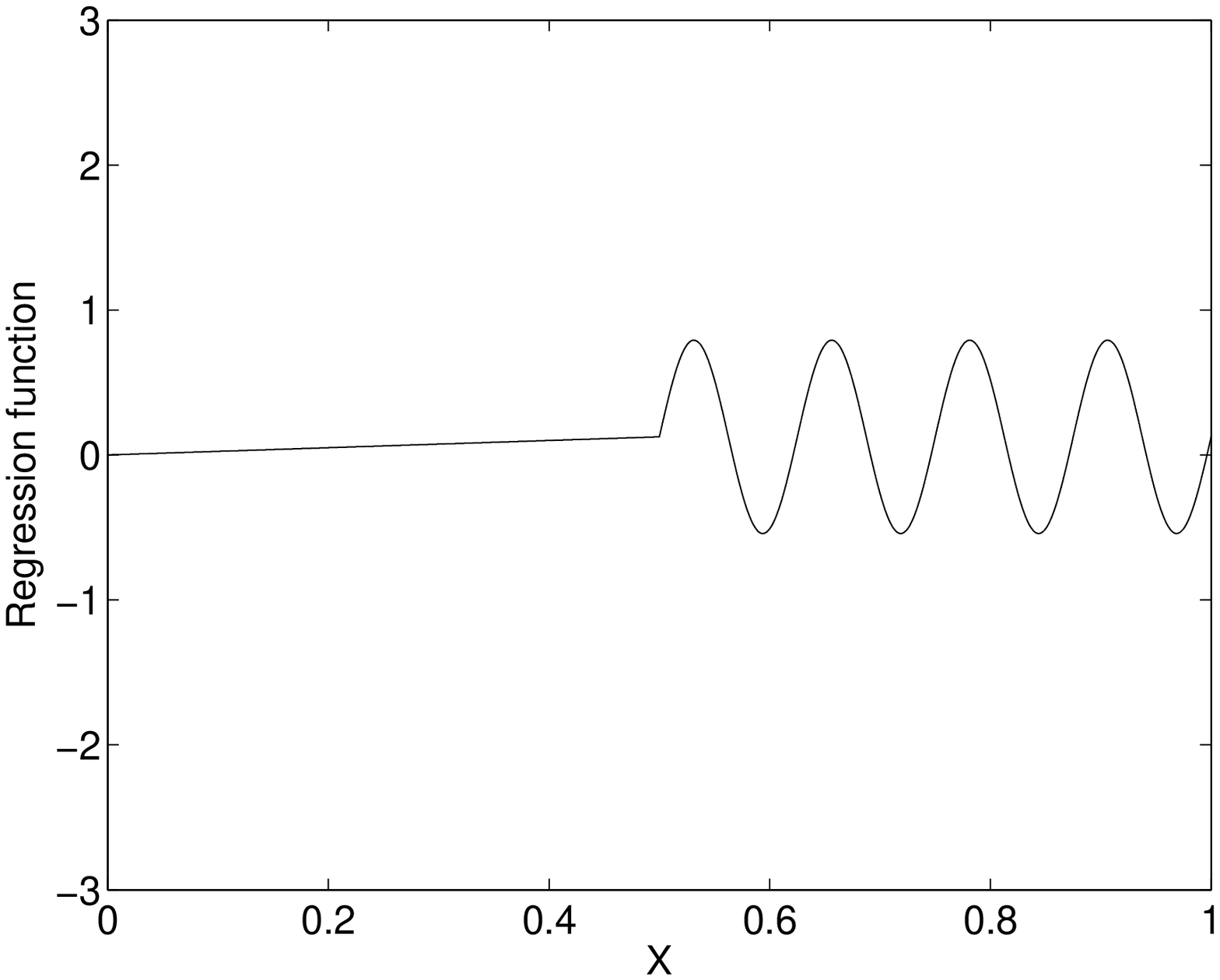}
\end{minipage}
\begin{minipage}[b]{.46\linewidth}
\includegraphics[width=\textwidth]{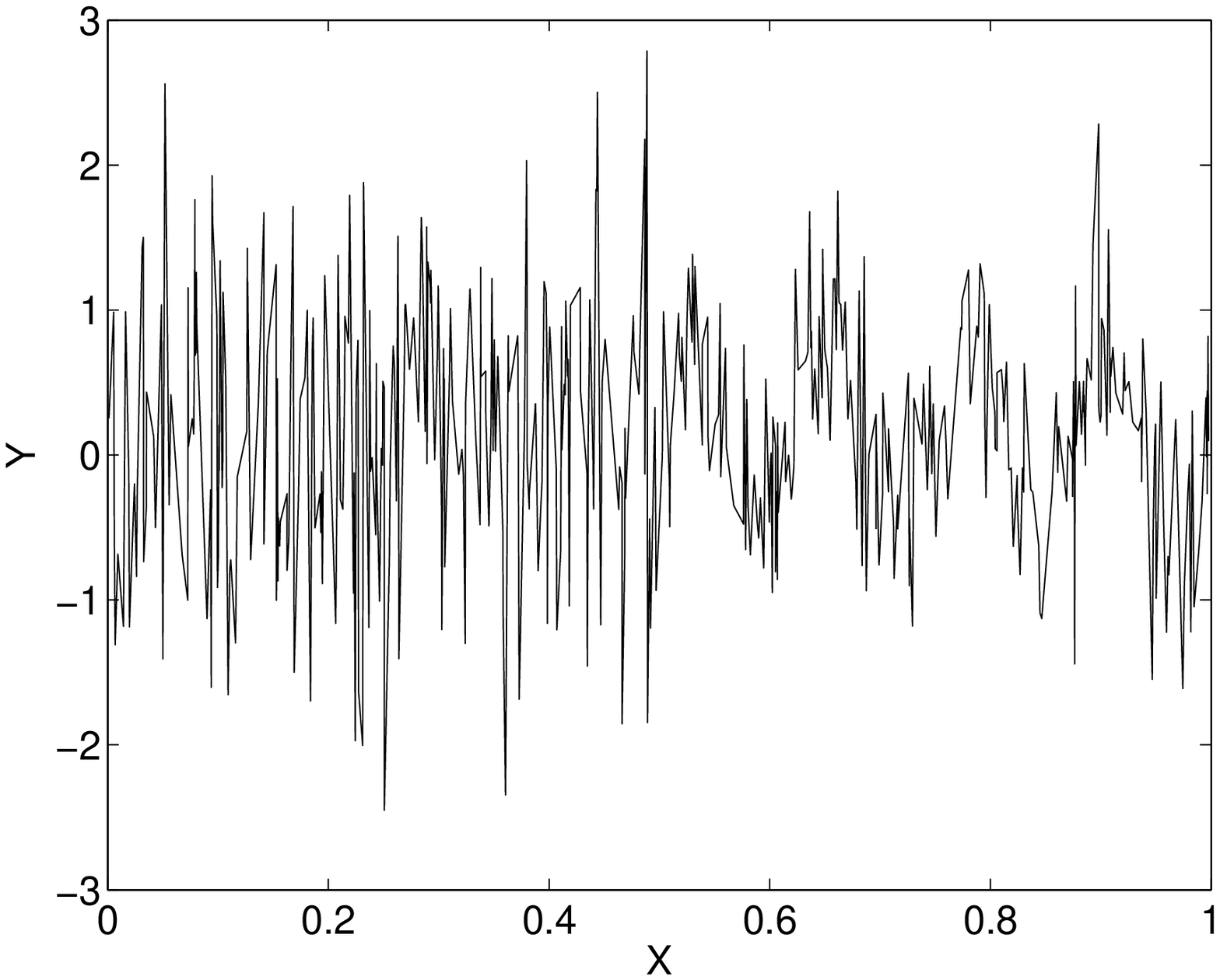}
\end{minipage} \hfill \\
Experiment XS1--05 (left: regression function; right: one data sample)
\end{center}
\caption{Regression functions and one particular data sample for the four experiments. \label{fig.reg-samples}}
\end{figure}

\medskip

\subsection{Procedures compared}
For each sample, the following model selection procedures are compared, where 
$\tau$ denotes a permutation of $\set{1, \ldots, n}$ such that $\paren{X_{\tau(i)}}_{1 \leq i \leq n}$ is nondecreasing.
\begin{itemize}
\item[(A)] Epenid: penalization with 
\[ \pen(m)=\E\croch{\penid(m)} \quad \mbox{ where } \quad \penid(m)=P\gamma\sparen{\ERM_m} - P_n\gamma\sparen{\ERM_m} \enspace , \]
as defined in Section~\ref{HP.sec.cadre.mod_selec}. This procedure makes use of the knowledge of the true distribution $P$ of data. Its model selection performances witness what performances could be expected (ideally) from penalization procedures adapting to heteroscedasticity.
\item[(B)] MalEst: penalization with $\penMal(m)$ where the variance is estimated as in Section~6 of \cite{Bar:2000}, that is, 
\[ \pen(m) = \frac{2 \sighsq D_m}{n} \quad \mbox{with} \quad \sighsq \egaldef \frac{1}{n} \sum_{i=1}^{n/2} \paren{Y_{\tau(2i)} - Y_{\tau(2i-1)}}^2 \enspace , \]
Replacing $\sighsq$ by $\E\croch{\sigma(X)^2}$ doesn't change much the performances of MalEst, see \refapp.
\item[(C)] MalMax: penalization with $\pen(m)=2\norm{\sigma}_{\infty}^2 D_m / n$ (using the knowledge of $\sigma$).
\item[(D)] HO: hold-out procedure, that is, 
\[ \mh \in \arg\min_{\mM_n} \set{P_n^{(I^c)} \gamma\paren{\ERM_m^{(I)}}} \enspace , \]
where $P_n^{(I^c)}$ and $\ERM^{(I)}$ are defined as in Section~\ref{sec.urep.RP}, and $I \subset\set{1, \ldots, n}$ is uniformly chosen among subsets of size $n/2$ such that $\forall k \in \set{1, \ldots, n/2}\,$, $\card(I\cap \set{\tau(2k-1),\tau(2k)})=1\,$.
\item[(E--F--G)] VFCV ($V$-fold cross-validation) with $V=2$, $5$ and $10\,$:
\[ \mh \in \arg\min_{\mM_n} \set{ \frac{1}{V} \sum_{j=1}^V P_n^{(B_j)} \gamma\paren{\ERM_m^{(B_j^c)}}} \enspace , \]
where $(B_j)_{1 \leq j \leq V}$ is a regular partition of $\set{1, \ldots, V}\,$, uniformly chosen among partitions such that 
$\forall j \in \set{1, \ldots, V}\,$, $\forall k \in \set{1, \ldots, n/V}\,$, 
$\card(B_j \cap \set{\tau(i) \telque kV-V+1 \leq i \leq kV})=1\,$.
\item[(H)] penHO: hold-out penalization, as defined in Section~\ref{sec.urep.RP}, with the same training set $I$ as in procedure (D).
\item[(I--J--K)] penVF ($V$-fold penalization) with $V=2$, $5$ and $10\,$, as defined by \eqref{eq.penVF}, with the same partition $(B_j)_{1 \leq j \leq V}$ as in procedures (E--F--G) respectively.
\item[(L)] penLoo (Leave-one-out penalization), that is, $V$-fold penalization with $V=n$ and $\forall j\,$, $B_j=\set{j}\,$.
\end{itemize}
Every penalization procedure was also performed with various overpenalization factors $\Cov \geq 1\,$, that is, with $\pen(m)$ replaced by $\Cov \times \pen(m)\,$. Only results with $\Cov \in \set{1, 2, 4}$ are reported in the paper since they summarize well the whole picture.
\begin{table} 
\caption{Short names for the procedures compared. \label{tab.proc}}
\begin{center}
\begin{minipage}[b]{.45\linewidth}
\begin{center}
\begin{tabular}
{p{0.30\textwidth}@{\hspace{0.05\textwidth}}p{0.30\textwidth}@{\hspace{0.05\textwidth}}p{0.30\textwidth}}
\hline\noalign{\smallskip}
A: Epenid & E: 2-FCV  & I: pen2-F \\
B: MalEst & F: 5-FCV  & J: pen5-F \\
C: MalMax & G: 10-FCV & K: pen10-F \\
D: HO     & H: penHO  & L: penLoo \\
\hline
\end{tabular} 
\end{center}
\end{minipage}
\end{center}
\end{table}

Furthermore, given each penalization procedure among the above (let us call it `Pen'), we consider the associated ideally calibrated penalization procedure `IdPen', which is defined as follows:
\begin{gather*}
\mideale_{\pen} = \mh_{\pen}\paren{ \Kid_{\pen} } 
\quad \mbox{where} \quad 
\forall K \geq 0 \, , \quad \mh_{\pen}(K) \in \arg\min_{\mM_n} \set{ P_n\gamma\paren{\ERM_m} + K \pen(m) } \\
\mbox{and} \quad \Kid_{\pen} \in \arg\min_{K \geq 0} \set{ \perte{\ERM_{\mh_{\pen}(K)}} } \enspace . 
\end{gather*}
In other words, $\pen(m)$ is used with the best distribution and data-dependent overpenalization factor $\Kid_{\pen}\,$.
Needless to say, `IdPen' makes use of the knowledge of $P\,$, and is only considered for experimental comparison.

When $\pen(m)=D_m\,$, the above definition defines the {\em ideal linear penalization procedure}, that we call `IdLin' (and the selected model is denoted by $\midlin$).
In addition, we consider the ideal dimensionality-based model selection procedure `IdDim', 
defined by \eqref{eq.middim}.

Finally, let us precise that in all the experiments, prior to performing any model selection procedure, models $S_m$ such that $\min_{\lamm} \card\set{i \telque X_i \in \Il} < 2$ are removed from $\paren{S_m}_{\mM_n}\,$.
Without removing interesting models, this preliminary step intends to provide a fair and clear comparison between penLoo (which was defined in \cite{Arl:2009:RP} including this preliminary step) and other procedures.

\medskip

The benchmark for comparing model selection performances of the procedures is
\begin{equation} \label{VFCV.def.Cor}
\Cor \egaldef \frac{ \E\croch{ \perte{\ERM_{\mh}} }} {\E\croch{ \inf_{\mM_n}  \perte{\ERM_m} }}  \enspace ,
\end{equation}
where both expectations are approximated by an average over the $N$ simulated samples.
Basically, $\Cor$ is the constant that should appear in an oracle inequality \eqref{eq.oracle} holding in expectation with $R_n=0$.
We also report the following uncertainty measure of our estimator of $\Cor\,$,
\begin{equation} \label{def.epsCor}
\epsCor \egaldef \frac{ \sqrt{ \var\paren{ \perte{\ERM_{\mh}} } } } { \sqrt{N} \E\croch{ \inf_{\mM_n}  \perte{\ERM_m} } } \enspace , 
\end{equation}
where $\var$ (resp. $\E$) is approximated by an empirical variance (resp. expectation) over the $N$ simulated samples.

\subsection{Results}

The (evaluated) values of $\Cor \pm \epsCor$ in the four experiments are given on Figures~\ref{fig.res.Cor-div1} and~\ref{fig.res.Cor-div2} (for procedures A--L) and in Table~\ref{LL.tab.deux} (for IdDim and the ideally calibrated penalization procedures).
In addition, results for experiment `X1--005' with various values of the sample size $n$ are presented on Figure~\ref{fig.X1-005.nvar}.
Note that a few additional results are provided in \refapp.
\begin{figure}
\begin{center}
\includegraphics[height=0.87\textwidth,angle=-90]{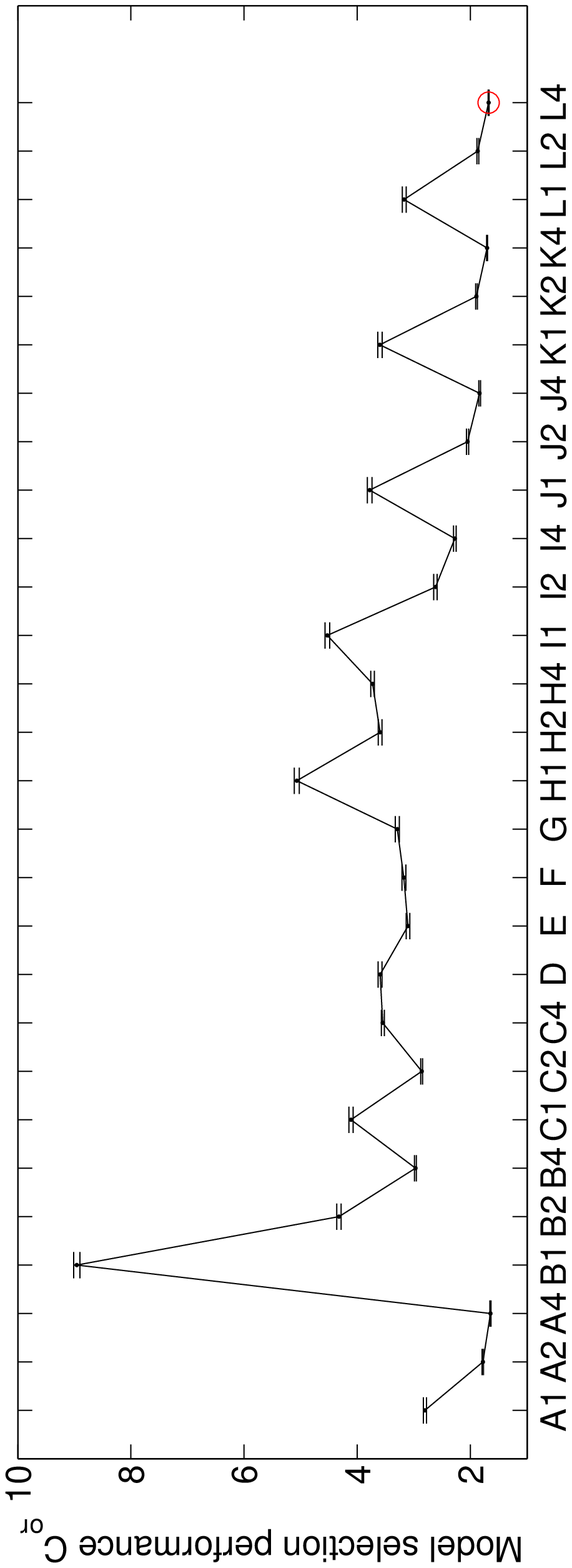}
\end{center}
\caption{Accuracy indices $\Cor$ in Experiment X1--005 for various model selection procedures. Error bars represent $\epsCor\,$. $\Cor$ is defined by \eqref{VFCV.def.Cor} and $\epsCor$ by \eqref{def.epsCor}. Red circles show which are the most accurate procedures, taking uncertainty into account and excluding procedures usign $\E\scroch{\penid}\,$. 
On the x-axis, procedures are named with a letter (whose meaning is given in Table~\ref{tab.proc}), plus a figure (for penalization procedures) equal to the value of the overpenalization constant $\Cov\,$; for instance, J2 means the 5-fold penalty multiplied by $2$. 
See also \refTabFigapp. 
\label{fig.res.Cor-div1}}
\end{figure}
\begin{figure}
\begin{center}
\includegraphics[height=0.87\textwidth,angle=-90]{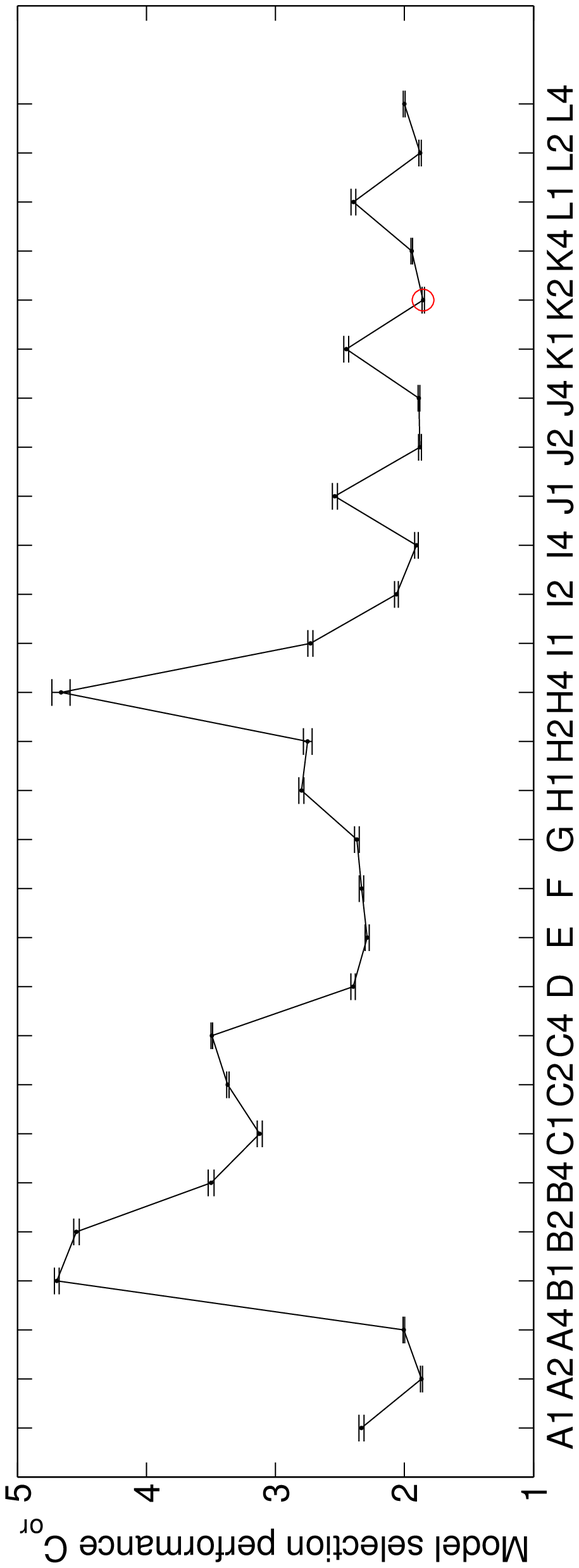}
\hfill \vspace{0.2cm} \\ Experiment X1--005$\mu$02 \vspace{0.2cm} \\
\includegraphics[height=0.87\textwidth,angle=-90]{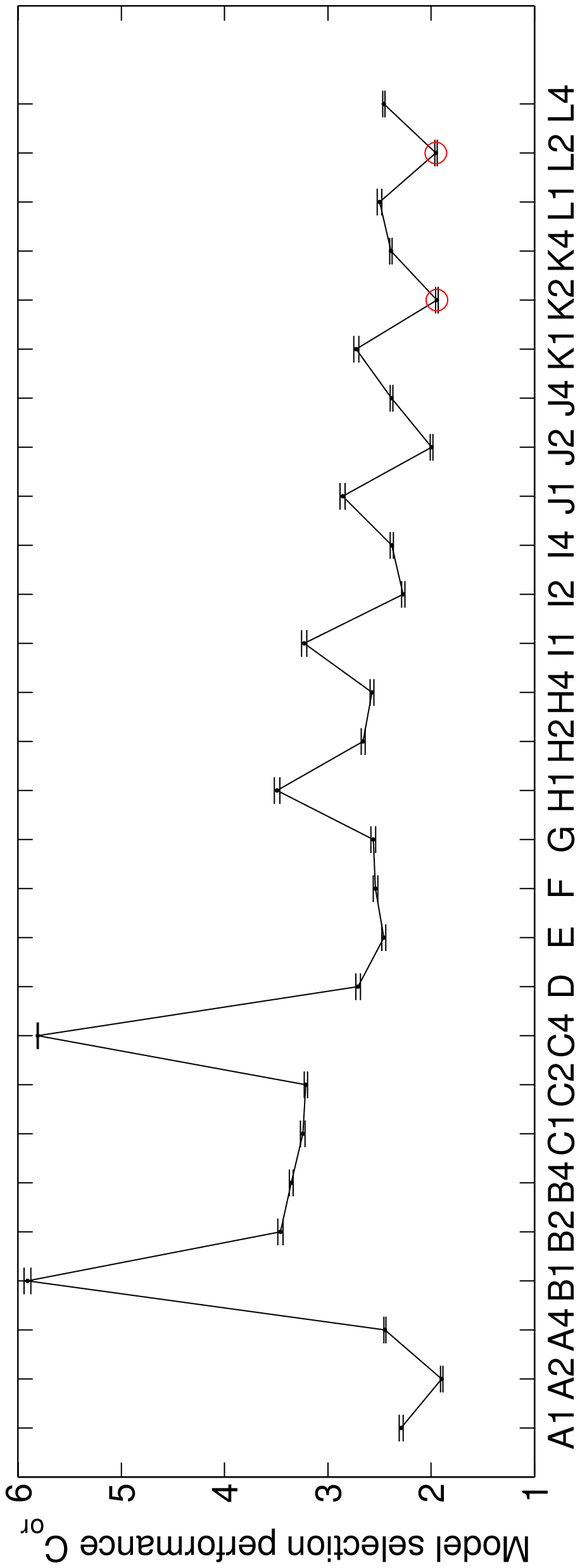}
\hfill \vspace{0.2cm} \\ Experiment S0--1 \vspace{0.2cm} \\
\includegraphics[height=0.87\textwidth,angle=-90]{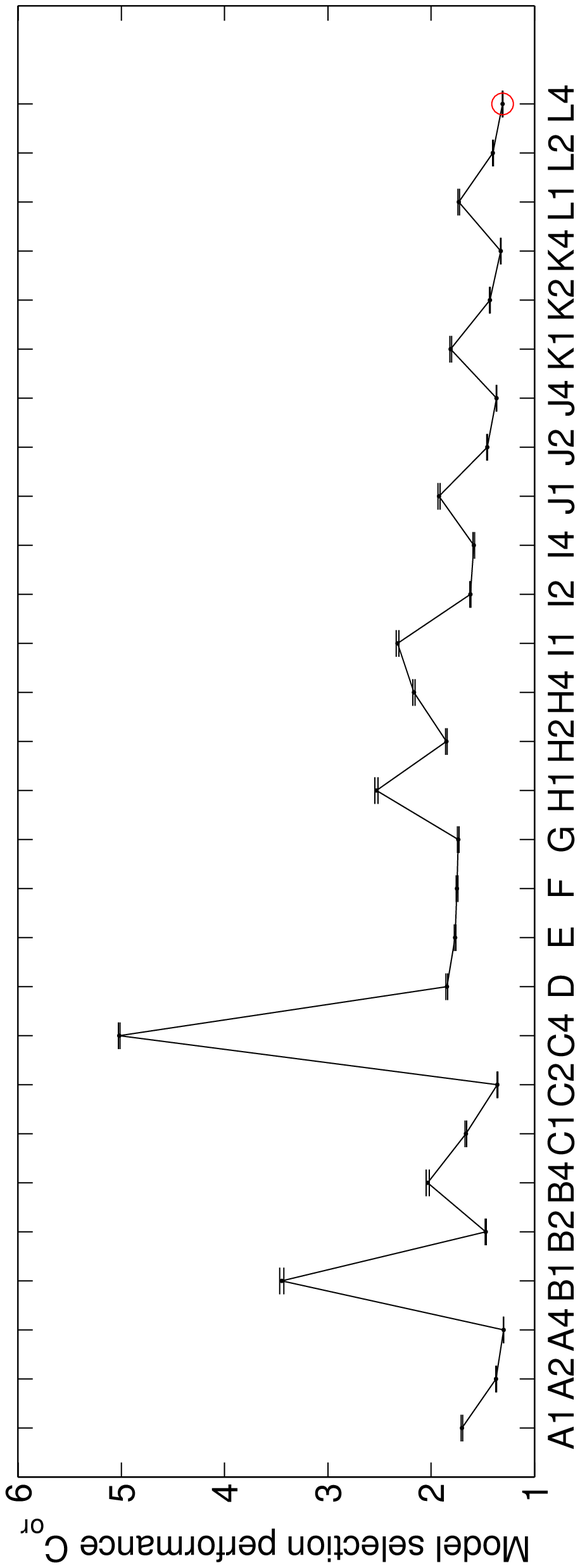}
\hfill \vspace{0.2cm} \\ Experiment XS1--05
\end{center}
\caption{Same as Figure~\ref{fig.res.Cor-div1} with the three other experiments. See also \refTabFigapp. \label{fig.res.Cor-div2}}
\end{figure}
\begin{table} 
\caption{Accuracy indices $\Cor$ for ``ideal'' procedures in four experiments, $\pm\epsCor\,$. See also \refFigappid.
\label{LL.tab.deux}}
\begin{center}
\begin{tabular}
{p{0.16\textwidth}@{\hspace{0.025\textwidth}}p{0.18\textwidth}@{\hspace{0.025\textwidth}}p{0.17\textwidth}@{\hspace{0.025\textwidth}}p{0.17\textwidth}@{\hspace{0.025\textwidth}}p{0.17\textwidth}}
\hline\noalign{\smallskip}
Experiment      & X1--005                  & S0--1                    & XS1--05                  & X1--005$\mu$02     \\
\noalign{\smallskip}
\hline
\noalign{\smallskip}
IdLin           & $2.065 \pm 0.010$        & $2.106 \pm 0.009$        & $1.308 \pm 0.002$        & $2.211 \pm 0.009$ \\
IdDim           & $1.507 \pm 0.009$        & $1.595 \pm 0.008$        & $1.262 \pm 0.002$        & $1.683 \pm 0.008$ \\
\noalign{\smallskip}
\hline
\noalign{\smallskip}
IdPenHO         & $2.158 \pm 0.020$        & $1.785 \pm 0.012$        & $1.509 \pm 0.005$        & $1.767 \pm 0.012$ \\
IdPen2F         & $1.454 \pm 0.011$        & $1.523 \pm 0.009$        & $1.303 \pm 0.003$        & $\meil{1.410 \pm 0.008}$ \\
IdPen5F         & $\meil{1.377 \pm 0.008}$ & $1.467 \pm 0.008$        & $1.244 \pm 0.002$        & $\meil{1.413 \pm 0.007}$ \\
IdPen10F        & $\meil{1.384 \pm 0.008}$ & $\meil{1.446 \pm 0.008}$ & $1.240 \pm 0.002$        & $\meil{1.414 \pm 0.007}$ \\
IdPenLoo        & $\meil{1.378 \pm 0.008}$ & $\meil{1.458 \pm 0.008}$ & $\meil{1.233 \pm 0.002}$ & $\meil{1.419 \pm 0.007}$ \\
\noalign{\smallskip}
\hline
\noalign{\smallskip}
IdEpenid        & $1.363 \pm 0.008$        & $1.401 \pm 0.008$        & $1.232 \pm 0.002$        & $1.410 \pm 0.007$ \\
\hline
\end{tabular} 
\end{center}
\end{table}

The first remark we can make from Figures~\ref{fig.res.Cor-div1} and~\ref{fig.res.Cor-div2} is that changing the overpenalization factor $\Cov$ can make large differences in $\Cor\,$, as it was pointed in \cite{Arl:2008a}. We do not address here the question of choosing $\Cov$ from data since it is beyond the scope of the paper; see Section~11.3.3 of \cite{Arl:2007:phd} for suggestions.

The choice of the overpenalization factor can be put aside in two ways. 
First, we can compare on Figures~\ref{fig.res.Cor-div1} and~\ref{fig.res.Cor-div2} the performances obtained with the best deterministic value of $\Cov$ for each procedure. Second, we can compare procedures with their best data-driven (and distribution dependent) overpenalization factor, as done in Table~\ref{LL.tab.deux}. 
Both ways yield the same qualitative conclusion in the four experiments---up to minor variations---which can be summarized as follows.

Firstly, most resampling-based procedures (that is, VFCV, penVF, penLoo) 
outperform dimensionality-based procedures (MalEst, which strongly overfits, and even MalMax), which confirms the theoretical results of Sections~\ref{sec.urep} and~\ref{sec.dimbased}. 
In particular, penLoo yields a significant improvement over MalEst and MalMax (by more than 25\% in three experiments, and by 2.3\% in XS1--05), and IdPenLoo similarly outperforms IdLin and IdDim (by more than 8.5\% in three experiments, and by 2.3\% in XS1--05).
Even penLoo with a well-chosen {\em deterministic} overpenalization factor $\Cov$ outperforms IdLin by 7\% to 18\% in experiments X1--005, S0--1 and X1--005$\mu$02, and penLoo equals the performances of IdLin in experiment XS1--05 (compare Table~\ref{LL.tab.deux} with \refTabapp).
Figure~\ref{fig.X1-005.nvar} illustrates the same phenomenon: when the sample size increases, the model selection performance of IdLin remains approximately constant (close to~2) while the model selection performance $\Cor$ of penLoo constantly decreases (with $\Cov=1.25$ because overpenalization is still needed for $n=3\,000$ and we could not consider larger sample sizes for computational reasons).
The reasons for this clear advantage of resampling-based procedures are the same in the four experiments: As pointed out in Section~\ref{sec.main.subopt.illus} for experiment X1--005, no dimensionality-based model selection procedure can select a model close enough to the oracle $\mo\,$. In particular, figures similar to Figure~\ref{fig.th1.path-density} hold in experiments S0--1, X1--005$\mu$02, and XS1--05, see \refFigpath.

Secondly, improving over dimensionality-based procedures requires a significant increase of the computational cost. 
Indeed, PenHO performs significantly worse than MalMax in experiments X1--005 and XS1--05, while penHO and MalMax have similar performances in experiments S0--1 and X1--005$\mu$02. 
Furthermore, IdPenHO performs worse than IdDim in the four experiments, and even worse than IdLin in experiments X1--005 and XS1--05.
In order to obtain sensibly better performances than dimensionality-based procedures, our experiments show that the computational cost must at least be increased to the one of 5-fold penalization.
This phenomenon certainly comes from the small signal-to-noise ratio, which makes it difficult to estimate precisely the penalty shape by resampling, whereas MalMax can provide reasonably good performances thanks to underfitting.

Finally, let us add that penVF outperforms VFCV (and similarly penHO outperforms HO), provided the (deterministic) overpenalization factor is well-chosen, as shown in a previous paper \cite{Arl:2008a}. 

\begin{figure}
\begin{minipage}[b]{.53\linewidth}
\includegraphics[width=\textwidth]{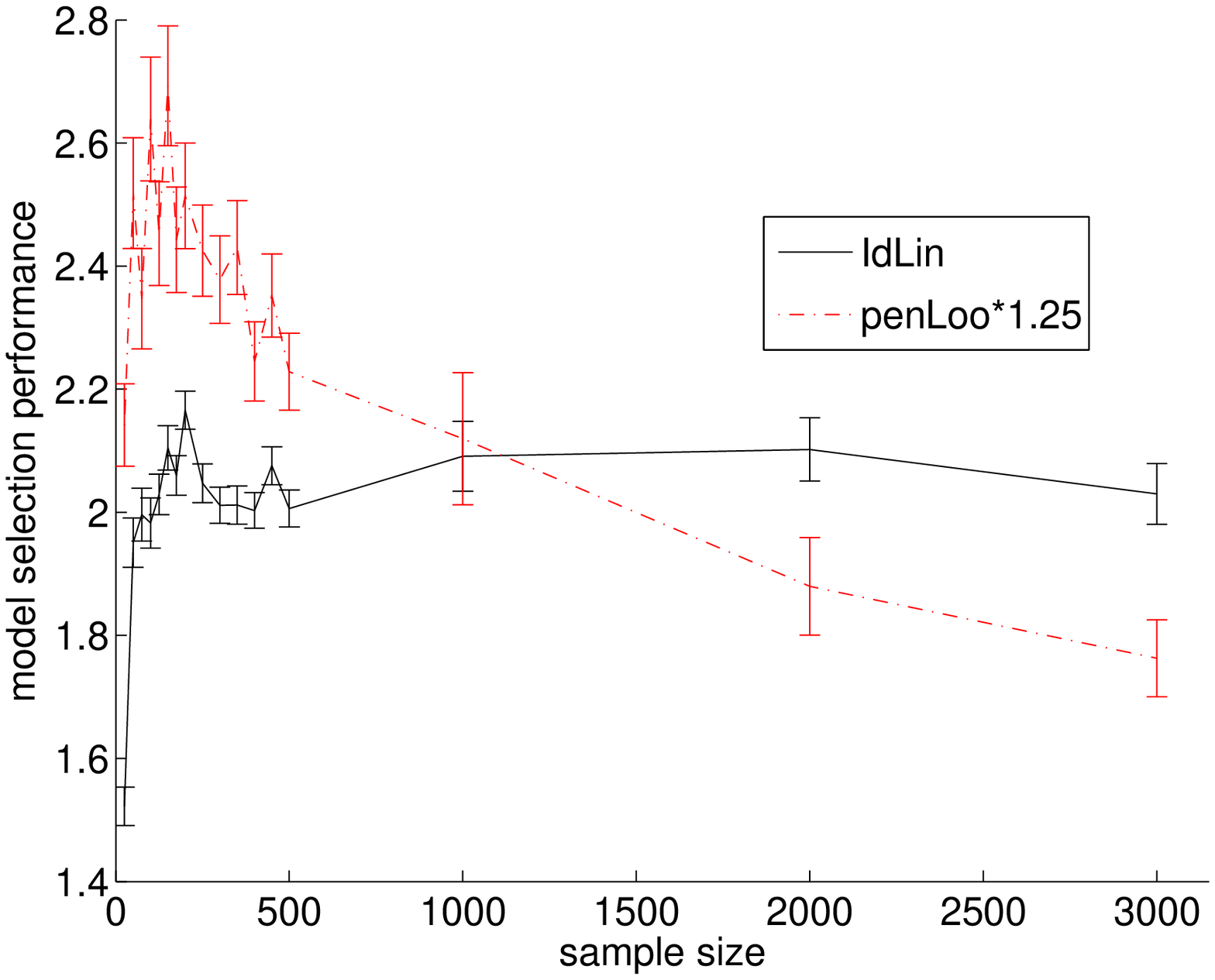}
\caption{Model selection performance \Cor\ of IdLin and penLoo (multiplied by $\Cov=1.25$) as a function of $n$ for experiment X1--005. Each estimated value of \Cor\ is obtained with $N=1\,000$ data samples for $n \leq 500$ and $N=200$ data samples for $n \geq 1\,000$ for computational reasons. Error bars represent $\epsCor\,$. 
\label{fig.X1-005.nvar}
}
\end{minipage}
\hspace{.025\linewidth}
\begin{minipage}[b]{.43\linewidth}
\includegraphics[width=\textwidth]{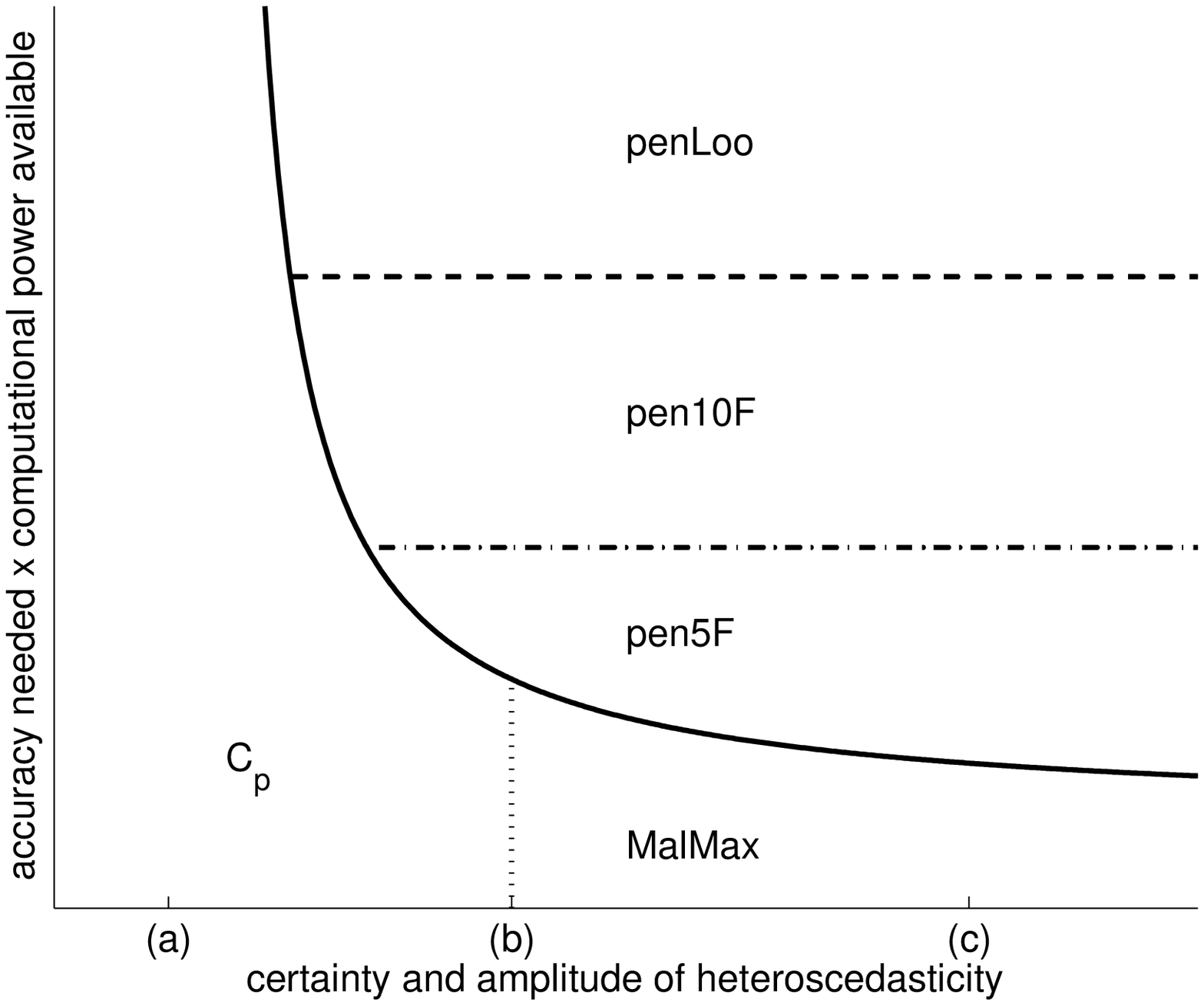}
\caption{Summary of Section~\ref{sec.concl}: best penalty as a function of the computational power available, the level of accuracy needed, and the knowledge about heteroscedasticity of data. The limit between linear and resampling penalties goes up when the SNR goes down.
\label{fig.schema}
}
\end{minipage}
\end{figure}

\section{Conclusion: How to choose the penalty?} \label{sec.concl}
Combining the theoretical results of Sections~\ref{sec.urep} and~\ref{sec.dimbased} with the conclusions of the experiments of Section~\ref{sec.simus}, we can propose an answer to the main question raised in this paper: Which penalty should be used for which model selection problem? A visual summary of this conclusion is proposed on Figure~\ref{fig.schema}. 

Three main factors must be taken into account to answer this question:
\begin{enumerate}
\item the prior knowledge on the noise-level $\sigma(\cdot)$, 
\item the trade-off between computational power and statistical accuracy desired,
\item the signal-to-noise ratio (SNR).
\end{enumerate}

What is known about $\sigma(\cdot)$ appears as the determinant factor:
\begin{itemize}
\item[(a)] If $\sigma(\cdot)$ is known to be constant, $C_p$ clearly is the best procedure, compared to cross-validation or resampling-based procedures which cannot take into account this information about data.
\item[(d)] If $\sigma(\cdot)$ is non-constant but completely known, then the expectation of the ideal penalty $\E\croch{\penid(m)}$ is entirely known and should be used, following the unbiased risk estimation principle.
\end{itemize}
Note that $C_p$ or AIC are still often used in case (d), mainly by non-statisticians that probably do not know (or do not trust) model selection procedures adapting to heteroscedasticity. This paper provides clear  theoretical arguments to show them what improvement they could obtain by using a properly chosen procedure.

\medskip

Choosing a penalty is less simple in the following two intermediate cases, where a trade-off must be found between the precise knowledge on $\sigma$, computational power and statistical accuracy:
\begin{itemize}
\item[(b)] $\sigma(\cdot)$ is probably (almost) constant, but this information is questionable
\item[(c)] $\sigma(\cdot)$ is known to be non-constant, without prior information on its shape
\end{itemize}

If the computational power has no limits (or, equivalently, if accuracy is crucial), resampling penalties (or $V$-fold penalties with $V$ large) should be used in both cases. 

Nevertheless, when the computational power is limited, one has to take into account that $V$-fold penalties with too small $V$ poorly estimate the shape of the penalty, so that they may be outperformed by MalMax (that is, $C_p$ with $\sigma^2$ replaced by some upper-bound on $\norm{\sigma}_{\infty}^2$), in particular in case (b).
Similarly, if the final user does not matter loosing a (small) constant in the quadratic risk, MalMax could be used instead of resampling in case (b), and even in case (c) provided $(\max\sigma/\min\sigma)^2$ is small enough. Of course, using MalMax requires either the knowledge of an upper bound on $\norm{\sigma}_{\infty}^2$, or to be able to estimate one (for instance assuming $\sigma(\cdot)$ does not jump too much).

\medskip

The picture can also change depending on the SNR. When the SNR is small, overpenalization is usually required. Therefore, choosing a proper overpenalization level can be more important than estimating the shape of the penalty, so that MalMax (possibly with an enlarged penalty) is quite a reasonable choice in case (b), and even in case (c) depending on the computational power.
On the contrary, when the SNR is large, $V$-fold penalties (even with rather small $V$, such as $V=5$) yield a significant improvement over any dimensionality-based penalty.

\medskip

A natural question arises from this conclusion: How to calibrate precisely the constant in front of the penalty? 
Birg\'e and Massart \cite{Bir_Mas:2006} proposed an optimal (and computationally cheap) data-driven procedure answering this question, based upon the concept of minimal penalties (see \cite{Arl_Mas:2009:pente} for the heteroscedastic regression framework). Nevertheless, theoretical results on Birg\'e and Massart's procedure are not accurate enough to determine whether it takes into account the need for overpenalization when the SNR is small. 
Therefore, understanding precisely how we should overpenalize as a function of the SNR seems a quite important question from the practical point of view, which is still widely open, up to the best of our knowledge.

\section{Proofs} \label{LL.sec.proof}

Before proving Theorem~\ref{th.shape} and Propositions~\ref{pro.oracle.lin.Klarge} and~\ref{pro.overfit.Mal}, let us define some notation and recall probabilistic results from other papers \cite{Arl:2009:RP,Arl:2008a,Arl_Mas:2009:pente} that are used in the proof. 

\subsection{Notation} \label{LL.sec.proof.nota}
In the rest of the paper, $L$ denotes an absolute constant, not
necessarily the same at each occurrence.
When $L$ is not universal, but depends on $p_1, \ldots, p_k$, it is written $L_{p_1,\ldots, p_k}$.

Define, for every model $\mM_n \,$, 
\begin{gather*} 
p_1(m) \egaldef P \gamma\paren{\ERM_m} - P \gamma\paren{\bayes_m} \qquad p_2(m) \egaldef P_n \gamma\paren{\bayes_m} - P_n \gamma\paren{\ERM_m} \quad \mbox{and} \quad \\
\delc(m) \egaldef (P_n - P) \paren{ \gamma\paren{\bayes_m} - \gamma\paren{\bayes}} 
\enspace . 
\end{gather*} 

\subsection{Probabilistic tools: expectations} \label{LL.sec.proof.expe}
\begin{proposition}[Proposition~1 and Lemma~7 in \cite{Arl:2008a}] \label{VFCV.pro.EcritVFCV-EcritID}
Let $S_m$ be the model of histograms associated with the partition $\Lambda_m\,$.
Then,
\begin{align} \label{eq.Ep1}
\E \croch{ p_1(m) } &= \frac{1}{n} \sum_{\lamm} \paren{1 + \delta_{n, \pl}} \sigl^2  
\\ \label{eq.Ep2}
\E \croch{ p_2(m) } &= \frac{1}{n} \sum_{\lamm} \sigl^2  
\\ \notag 
\mbox{where} \quad \sigl^2 &\egaldef \E\croch{ \paren{Y - \bayes(X)}^2 \sachant X \in \Il} = \carre{\sigld} + \carre{\sigla} \enspace ,
\end{align}
$\pl = \Prob\paren{X \in \Il}$ and $\delta_{n,p}$ only depends on $(n,p)\,$. 
Moreover, $\delta_{n,p}$ is small when the product $np$ is large: 
\[ \absj{\delta_{n, p} } \leq \min\set{ L_1, L_2 (np)^{-1/4} } \enspace , \]
where $L_1$ and $L_2$ are absolute constants.
\end{proposition}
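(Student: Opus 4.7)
The plan is to exploit the explicit form of the least-squares estimator on a histogram model. Write $\betl = \E\croch{Y \mid X \in I_{\lambda}}$, so that $\bayes_m = \sum_{\lamm} \betl \un_{I_{\lambda}}$; set $N_{\lambda} = \card\{ i : X_i \in I_{\lambda} \}$, $\phl = N_{\lambda}/n$, and $\bethl = N_{\lambda}^{-1}\sum_{i : X_i \in I_{\lambda}} Y_i$ when $N_{\lambda} \geq 1$ (the value assigned on empty bins is irrelevant to what follows). A direct orthogonality argument on each bin, separately for the $P$- and the $P_n$-inner products, gives the identities
\[ p_1(m) = \sum_{\lamm} \pl \paren{\bethl - \betl}^2 \qquad \mbox{and} \qquad p_2(m) = \sum_{\lamm} \phl \paren{\bethl - \betl}^2 . \]

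The next step is to compute the conditional expectation of $\paren{\bethl - \betl}^2$ given $N_{\lambda} = k \geq 1$. Conditionally on the design, the $Y_i$'s are independent with $\E[Y_i \mid X_i] = \bayes(X_i)$ and $\var(Y_i \mid X_i) = \paren{\sigma(X_i)}^2$, yielding the bias--variance split
\[ \E\bigl[ \paren{\bethl - \betl}^2 \bigm| X_1, \ldots, X_n \bigr] = N_{\lambda}^{-2} \sum_{i : X_i \in I_{\lambda}} \paren{\sigma(X_i)}^2 + \biggl( N_{\lambda}^{-1} \sum_{i : X_i \in I_{\lambda}} \paren{\bayes(X_i) - \betl} \biggr)^2 . \]
Conditionally on $N_{\lambda} = k$, the $X_i$'s in $I_{\lambda}$ are i.i.d.\ with law $\loi(X \mid X \in I_{\lambda})$, for which $\E\croch{\paren{\sigma(X)}^2 \mid X \in I_{\lambda}} = \carre{\sigla}$ and $\var\paren{\bayes(X) \mid X \in I_{\lambda}} = \carre{\sigld}$; integrating therefore yields
\[ \E\bigl[ \paren{\bethl - \betl}^2 \bigm| N_{\lambda} = k \bigr] = \frac{\sigl^2}{k} \qquad (k \geq 1), \qquad \sigl^2 = \carre{\sigla} + \carre{\sigld} . \]

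For \eqref{eq.Ep2}, substituting $\phl = N_{\lambda}/n$ absorbs the $1/k$ factor and gives $\E[\phl(\bethl - \betl)^2 \mid N_{\lambda}] = \sigl^2/n$ on $\set{N_{\lambda} \geq 1}$, while the contribution of $\set{N_{\lambda} = 0}$ vanishes identically; summing over $\lamm$ yields the result. For \eqref{eq.Ep1}, $\E\croch{\pl \paren{\bethl - \betl}^2} = \pl \sigl^2 \E\croch{N_{\lambda}^{-1} \un_{N_{\lambda} \geq 1}}$, so defining $\delta_{n,p} \egaldef n p \, \E\croch{N^{-1} \un_{N \geq 1}} - 1$ for $N \sim \mathrm{Binomial}(n,p)$ produces the stated identity.

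The only substantive remaining point is the bound $\absj{\delta_{n,p}} \leq \min\set{L_1, L_2(np)^{-1/4}}$. A universal constant $L_1$ is immediate from $\E\croch{N^{-1}\un_{N \geq 1}} \in [0,1]$ (which gives $\delta_{n,p} \geq -1$) combined with $\delta_{n,p} \leq np - 1$, which suffices whenever $np$ is bounded. For the refined rate $(np)^{-1/4}$, the plan is to split on the concentration event $\set{ \absj{N - np} \leq (np)^{3/4} }$: on this event a first-order Taylor expansion gives $N^{-1} = (np)^{-1}(1 + \grandO((np)^{-1/4}))$, while a Bernstein inequality bounds the probability of the complementary event by a term exponentially small in $(np)^{1/2}$, which multiplied by the crude bound $N^{-1} \leq 1$ is absorbed into the $(np)^{-1/4}$ error term after multiplication by $np$. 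I expect this concentration-plus-Taylor estimate on the reciprocal moment of a binomial to be the only genuinely technical piece of the proof, the rest being bookkeeping on the explicit histogram formulas.
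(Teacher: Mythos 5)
The paper does not actually prove Proposition~\ref{VFCV.pro.EcritVFCV-EcritID}: it imports it from \cite{Arl:2008a}, and the remark following the statement records the explicit form $\delta_{n,p}=np\,\E\croch{Z^{-1}\un_{Z>0}}-1$ that your argument reconstructs. Your route --- per-bin orthogonality giving $p_1(m)=\sum_{\lamm}\pl\paren{\bethl-\betl}^2$ and $p_2(m)=\sum_{\lamm}\phl\paren{\bethl-\betl}^2$, the bias--variance split conditionally on the design, the observation that given $N_{\lambda}=k$ the points falling in $I_{\lambda}$ are i.i.d.\ with law $\loi\paren{X \sachant X\in I_{\lambda}}$ so that $\E\croch{\paren{\bethl-\betl}^2\sachant N_{\lambda}=k}=\sigl^2/k$, and the Bernstein-plus-Taylor control of $np\,\E\croch{N^{-1}\un_{N\geq 1}}$ (with the crude bound covering small $np$) --- is exactly the standard computation behind the cited result, and all of these steps are sound.

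The one point you get wrong is the treatment of empty bins, which is precisely the convention issue the paper flags in Remark~\ref{rk.histo.pbdef}. Your parenthetical claim that ``the value assigned on empty bins is irrelevant to what follows'' is true for $p_2(m)$ (there $\phl=0$, so the term vanishes) but false for $p_1(m)$: $\pl>0$ multiplies $\paren{\bethl-\betl}^2$ whether or not the bin is empty, so with, say, the convention $\bethl=0$ on $\set{N_{\lambda}=0}$ an extra term $\pl\betl^2\,\Prob\paren{N_{\lambda}=0}$ would appear. Your identity $\E\croch{\pl\paren{\bethl-\betl}^2}=\pl\sigl^2\,\E\croch{N_{\lambda}^{-1}\un_{N_{\lambda}\geq 1}}$ silently uses the convention that empty bins contribute nothing to $p_1(m)$ (equivalently $\bethl\egaldef\betl$ there), and \eqref{eq.Ep1} is exact only under that convention. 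Symmetrically, for \eqref{eq.Ep2} your own computation yields $\frac{1}{n}\sum_{\lamm}\sigl^2\,\Prob\paren{N_{\lambda}\geq 1}$ rather than $\frac{1}{n}\sum_{\lamm}\sigl^2$, so the stated equality again requires the appropriate convention on the event where some bin is empty (or must be read up to the exponentially small defects $\sigl^2\paren{1-\pl}^n/n$, which are harmless for every use made of the proposition in this paper). Make the convention explicit and both identities, as well as your bound on $\delta_{n,p}$, go through as planned.
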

Note that $\delta_{n,p}$ can be made explicit: $\delta_{n,p} = np \E\croch{Z^{-1} \un_{Z>0}} - 1 $
where $Z$ is a binomial random variable with parameters $(n,p)\,$.
\begin{remark} \label{rk.histo.pbdef} The regressogram estimator $\ERM_m$ is not defined when 
$\card\set{ i \telque X_i \in \Il}=0$ for some $\lamm$, which occurs with positive probability.
Therefore, a convention for $p_1(m)$ as to be chosen on this event (which has a small probability, see Claim~\ref{claim:nopb} in the proof of Theorem~\ref{th.shape}) so that $p_1(m)$ has a finite expectation (see \cite{Arl:2008a} for details). 
This convention is purely formal, since the statement of Theorem~\ref{th.shape} does not involve the expectation of $p_1(m)$. The important point is that the same convention is used in Proposition~\ref{VFCV.pro.conc.penid} below.
\end{remark}

\subsection{Probabilistic tools: concentration inequalities} \label{sec.proof.conc}
We state in this section some concentration results on the components of the ideal penalty, using for $p_1(m)$ the same convention as in Proposition~\ref{VFCV.pro.EcritVFCV-EcritID}.

\begin{proposition}[Proposition~10 in \cite{Arl:2009:RP}, proved in Section~4 of \cite{Arl:2008b:app}]
\label{VFCV.pro.conc.penid}
Let $\gamma>0$. Assume that $\min_{\lamm} \set{n \pl} \geq B_n \geq 1$, $\norm{Y}_{\infty} \leq A < \infty$ and 
\[ D_m^{-1} \sum_{\lamm} \E\croch{\sigma(X)^2 \sachant X \in \Il} \geq Q > 0 \enspace . \]
Then, an event of probability at least $1 - L n^{-\gamma}$ exists on which 
\begin{align} \label{VFCV.eq.conc.p1.min}
p_1(m) &\geq \E \croch{p_1(m)} - L_{A,Q,\gamma}  \croch{   \paren{\ln n}^2 D_m^{-1/2} +  e^{-L B_n} }  \E\croch{p_2(m)}
\\
\label{VFCV.eq.conc.p1.maj}
p_1(m) &\leq \E \croch{p_1(m)} + L_{A,Q,\gamma}  \croch{  \paren{\ln n}^2 D_m^{-1/2}  +  \sqrt{D_m} e^{-L B_n} }  \E\croch{p_2(m)}
\end{align}
\begin{equation}
\label{VFCV.eq.conc.p2}
\absj{p_2(m) - \E [p_2(m)]} \leq L_{A,Q,\gamma} D_m^{-1/2} \ln(n) \E\croch{p_2(m)} \enspace .
\end{equation}
\end{proposition}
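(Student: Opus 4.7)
The plan is to exploit the explicit formulas available for regressograms. Writing $\hat{\beta}_\lambda \egaldef n_\lambda^{-1}\sum_{i : X_i \in I_\lambda} Y_i$, $\beta_\lambda \egaldef \E[Y \mid X \in I_\lambda]$ and $\hat{p}_\lambda \egaldef n_\lambda/n$, a direct computation using $\bayes_m = \sum_\lambda \beta_\lambda \un_{I_\lambda}$ and $\ERM_m = \sum_\lambda \hat{\beta}_\lambda \un_{I_\lambda}$ yields the pointwise identities
\begin{equation*}
p_1(m) = \sum_{\lambda \in \Lambda_m} p_\lambda \paren{\hat{\beta}_\lambda - \beta_\lambda}^2
\quad\text{and}\quad
p_2(m) = \sum_{\lambda \in \Lambda_m} \hat{p}_\lambda \paren{\hat{\beta}_\lambda - \beta_\lambda}^2 \enspace .
\end{equation*}
The proof thus reduces to controlling the binomial counts $n_\lambda$, the squared averages $(\hat{\beta}_\lambda - \beta_\lambda)^2$, and their weighted sum over $\lambda$.

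First, I would apply Bernstein's inequality to each $n_\lambda \sim \mathrm{Bin}(n,p_\lambda)$ and union-bound over the $D_m \leq n$ pieces, to produce an event $\Omega_1$ of probability at least $1 - Ln^{-\gamma}$ on which $\sabsj{\hat{p}_\lambda/p_\lambda - 1} \leq L_\gamma \sqrt{\ln(n)/(np_\lambda)}$ and $n_\lambda \geq np_\lambda/2$ simultaneously for every $\lambda$. The probability that at least one bin remains empty is bounded by $D_m \exp(-np_\lambda) \leq n \exp(-B_n)$, which is precisely what produces the exponential terms $e^{-LB_n}$ in \eqref{VFCV.eq.conc.p1.min}--\eqref{VFCV.eq.conc.p1.maj}; the extra $\sqrt{D_m}$ factor appearing in the upper bound \eqref{VFCV.eq.conc.p1.maj} (but not in the lower bound \eqref{VFCV.eq.conc.p1.min}) reflects the worst-case value that the convention chosen for $\ERM_m$ on empty bins can take (see Remark~\ref{rk.histo.pbdef}), which can only inflate $p_1(m)$ and leaves the lower bound unaffected.

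Second, conditionally on $(X_i)_{1\leq i\leq n}$ and on $\Omega_1$, I decompose $\hat{\beta}_\lambda - \beta_\lambda = n_\lambda^{-1}\sum_{X_i \in I_\lambda}\croch{(s(X_i) - \beta_\lambda) + \sigma(X_i)\varepsilon_i}$ as a sum of conditionally independent, centered, $O(A)$-bounded random variables of total conditional variance $\sigma_\lambda^2/n_\lambda$; Bernstein's inequality then gives $(\hat{\beta}_\lambda - \beta_\lambda)^2 \leq L_{A,\gamma}\sigma_\lambda^2 \ln(n)/(np_\lambda)$ uniformly in $\lambda$, on an event of probability at least $1 - Ln^{-\gamma}$. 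Moreover, for $\lambda \neq \lambda^\prime$ the random variables $(\hat{\beta}_\lambda - \beta_\lambda)^2$ and $(\hat{\beta}_{\lambda^\prime} - \beta_{\lambda^\prime})^2$ are, conditionally on $(X_i)$, built from disjoint blocks of the $\varepsilon_i$'s, hence are conditionally independent. Introducing $Z_\lambda \egaldef \hat{p}_\lambda(\hat{\beta}_\lambda - \beta_\lambda)^2 - n^{-1}\sigma_\lambda^2$, each of variance $\grandO(\sigma_\lambda^4/n^2)$, a Bernstein inequality on $\sum_\lambda Z_\lambda$ then produces a deviation of order $\sqrt{D_m}/n$ up to logarithmic factors; dividing by $\E[p_2(m)] = n^{-1}\sum_\lambda \sigma_\lambda^2 \geq Q D_m/n$ yields exactly the relative error $D_m^{-1/2}\ln(n)$ of \eqref{VFCV.eq.conc.p2}.

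Finally, I would transfer this control to $p_1(m)$ through the identity $p_1(m) - p_2(m) = \sum_\lambda (p_\lambda - \hat{p}_\lambda)(\hat{\beta}_\lambda - \beta_\lambda)^2$, whose absolute value is at most $\paren{\max_\lambda \sabsj{\hat{p}_\lambda - p_\lambda}/p_\lambda}\cdot p_1(m)$. Combining the bound on the $\hat{p}_\lambda$ from step one with the uniform bound on $(\hat{\beta}_\lambda - \beta_\lambda)^2$ from step two yields \eqref{VFCV.eq.conc.p1.min}--\eqref{VFCV.eq.conc.p1.maj}, the extra power of $\ln n$ compared to \eqref{VFCV.eq.conc.p2} coming from the additional control of the $\hat{p}_\lambda$. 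The main obstacle will be the coupling between the two sources of randomness: one must condition on $(X_i)$ to exploit across-bin independence of the noise, then lift the conditional deviation bounds to the joint distribution while handling the design-side fluctuations of the $\hat{p}_\lambda$ and $n_\lambda$; and none of the exponential tails can be made to decay faster than $e^{-LB_n}$, since genuine concentration breaks down on bins where $np_\lambda$ is only of order $B_n$.
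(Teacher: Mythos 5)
First, a point of comparison: the paper does not prove this proposition at all --- it is imported verbatim from Proposition~10 of \cite{Arl:2009:RP}, whose proof is in Section~4 of \cite{Arl:2008b:app} --- so your attempt can only be measured against that external proof. Your starting identities $p_1(m)=\sum_{\lamm} p_\lambda(\widehat{\beta}_\lambda-\beta_\lambda)^2$ and $p_2(m)=\sum_{\lamm}\widehat{p}_\lambda(\widehat{\beta}_\lambda-\beta_\lambda)^2$ are correct and are indeed the basis of that proof, and your treatment of $p_2(m)$ is roughly on track. But the sketch has genuine gaps precisely in the regime the statement is designed for, namely $B_n\geq 1$ possibly much smaller than $\ln n$. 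Your event $\Omega_1$ does not have probability $1-Ln^{-\gamma}$: for a cell with $np_\lambda=B_n=O(1)$, $\P(n_\lambda<np_\lambda/2)$ (indeed $\P(n_\lambda=0)$) is of order $e^{-cB_n}$, a constant, and no union bound repairs this. The terms $e^{-LB_n}$ and $\sqrt{D_m}\,e^{-LB_n}$ in the statement are not the probability of an exceptional event folded away --- the event keeps probability $1-Ln^{-\gamma}$ for every $B_n\geq1$ --- but deterministic corrections inside the inequalities that absorb the contribution of low-count cells; reading them as the empty-bin probability, as you do, would at best yield the result on an event of probability $1-Ln^{-\gamma}-D_m e^{-cB_n}$, which is a different (and for small $B_n$ vacuous) statement.

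Second, the transfer to $p_1(m)$ via $\absj{p_1(m)-p_2(m)}\leq \bigl(\max_{\lamm}\absj{\widehat{p}_\lambda-p_\lambda}/p_\lambda\bigr)\, p_1(m)$ is quantitatively too crude: even on your event this maximum is only of order $\sqrt{\ln n/B_n}$ (or $\ln n/B_n$ when $B_n\lesssim\ln n$), which is not bounded by $(\ln n)^2D_m^{-1/2}+e^{-LB_n}$; for instance with $B_n=(\ln n)^2$ and $D_m\geq(\ln n)^6$ --- exactly the regime used in the proof of Theorem~\ref{th.shape} --- the proposition allows a relative error $O(1/\ln n)$ while your chain gives only $O(1/\sqrt{\ln n})$. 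Moreover your chain compares $p_1(m)$ to $\E\croch{p_2(m)}$, whereas the statement centers at $\E\croch{p_1(m)}$; the gap $\E\croch{p_1(m)}-\E\croch{p_2(m)}=n^{-1}\sum_{\lamm}\delta_{n,p_\lambda}\sigl^2$ is yet another term, of order $B_n^{-1/4}\E\croch{p_2(m)}$, that does not fit inside the claimed error. The cited proof avoids both problems by concentrating $p_1(m)$ directly in two layers: conditionally on the cell counts $(n_\lambda)_\lambda$ (not on the full design $(X_i)_i$ --- given the design, $\widehat{\beta}_\lambda-\beta_\lambda$ is \emph{not} centered, since its design-conditional mean is $n_\lambda^{-1}\sum_{X_i\in\Il}(\bayes(X_i)-\beta_\lambda)$), $p_1(m)$ is a sum of independent terms with conditional mean $\sum_{\lamm}p_\lambda\sigl^2 n_\lambda^{-1}\un_{n_\lambda>0}$, and it is the second layer --- concentrating this random conditional mean around $\E\croch{p_1(m)}=n^{-1}\sum_{\lamm}(1+\delta_{n,p_\lambda})\sigl^2$, including the small-count events --- that genuinely produces the $e^{-LB_n}$ terms and their asymmetry between \eqref{VFCV.eq.conc.p1.min} and \eqref{VFCV.eq.conc.p1.maj}, rather than the convention on empty bins as you suggest.
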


\begin{lemma}[Proposition~8 in \cite{Arl_Mas:2009:pente}]
 \label{VFCV.le.conc.delta.borne}
Assume that $\norm{Y}_{\infty} \leq A < \infty$. Then for any $x \geq 0$, an event of probability at least $1 - 2 e^{-x}$ exists on which
\begin{equation}\label{VFCV.eq.conc.delta.borne.2}
\forall \eta >0 \, , \qquad \absj{\delc(m)} \leq \eta \perte{\bayes_m} + \left( \frac{4}{\eta} + \frac{8}{3} \right) \frac{A^2 x}{n}  \enspace .\end{equation}
\end{lemma}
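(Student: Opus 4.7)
The plan is to write $\delc(m)$ as a normalized sum of i.i.d. centered random variables and apply Bernstein's inequality, then convert the resulting square-root bound to the required form via a weighted AM--GM step.

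First I would introduce $Z_i \egaldef \gamma(\bayes_m,(X_i,Y_i)) - \gamma(\bayes,(X_i,Y_i))$, so that $\delc(m) = n^{-1}\sum_i (Z_i - \E Z_i)$ is a normalized sum of i.i.d. terms. Using the algebraic identity $(a-c)^2 - (b-c)^2 = (a-b)(a+b-2c)$ I would rewrite $Z_i = u_i(u_i - 2 v_i)$ where $u_i \egaldef \bayes_m(X_i) - \bayes(X_i)$ and $v_i \egaldef Y_i - \bayes(X_i)$. Since $\E[v_i \mid X_i] = 0$, we have $\E Z_i = \E[u_i^2] = \perte{\bayes_m}$, which anchors the expected value and lets the Bernstein deviation bound be stated in terms of the excess loss of $\bayes_m$.

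Next I would record the two structural estimates needed for Bernstein. From $\norm{Y}_\infty \leq A$ one gets $\norm{\bayes}_\infty \leq A$ and $\norm{\bayes_m}_\infty \leq A$ (for histogram-type models, $\bayes_m$ is a local average of $\bayes$), hence $|u_i|\leq 2A$ and
\[
|u_i - 2 v_i| = \absj{\bayes_m(X_i) + \bayes(X_i) - 2 Y_i} \leq 4 A,
\]
which gives the boundedness constant $|Z_i| \leq 8 A^2$. For the variance, the same pointwise bound $(u_i - 2v_i)^2 \leq 16 A^2$ yields $Z_i^2 \leq 16 A^2 u_i^2$, so $\var(Z_i) \leq \E[Z_i^2] \leq 16 A^2 \perte{\bayes_m}$; crucially the second moment is controlled by $\perte{\bayes_m}$ itself, which is what will let us absorb the fluctuation into $\eta\,\perte{\bayes_m}$.

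Plugging these into the standard Bernstein inequality for i.i.d. centered bounded variables, with probability at least $1 - 2 e^{-x}$ one obtains
\[
\absj{\delc(m)} \leq \sqrt{\frac{2 \cdot 16 A^2 \perte{\bayes_m}\, x}{n}} + \frac{c\, A^2 x}{n}
\]
for an explicit numerical constant $c$ (of order $8/3$, once one tracks the one-sided Bernstein bound carefully and unions over the two tails). Finally, a weighted arithmetic--geometric inequality $\sqrt{XY} \leq \eta X + Y/(4\eta)$ applied with $X = \perte{\bayes_m}$ and $Y$ proportional to $A^2 x/n$ converts the $\sqrt{\cdot}$ term into $\eta\,\perte{\bayes_m} + (C/\eta)\,A^2 x /n$, giving exactly the form \eqref{VFCV.eq.conc.delta.borne.2}. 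The only subtlety in this routine plan is book-keeping the constants so that the coefficient of $A^2 x/(\eta n)$ comes out to $4$ and the additive term to $8/3$; this may require a slightly sharper variance estimate (e.g.\ expanding $\E[Z_i^2] = \E[u_i^4] + 4\E[u_i^2 \sigma(X_i)^2]$ and optimizing the inequality $|u_i|\leq 2A$) but the overall architecture---Bernstein plus AM--GM---remains the same.
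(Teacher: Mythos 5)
Your proof is correct and is essentially the argument behind the cited result (the paper itself does not reprove this lemma, it only quotes Proposition~8 of Arlot--Massart): write $\delc(m)$ as a centered i.i.d.\ average of $Z_i = u_i(u_i-2v_i)$, apply Bernstein, then absorb the square-root term by $2\sqrt{ab}\leq \eta a + b/\eta$. The constant book-keeping you flag does close as you anticipate: since $\E\croch{Z_i^2} = \E\croch{u_i^4} + 4\E\croch{u_i^2\sigma^2(X_i)}$ with $u_i^2\leq 4A^2$ and $\sigma^2(X_i)=\var(Y\sachantd X_i)\leq A^2$ (not just $4A^2$), one gets $\E\croch{Z_i^2}\leq 8A^2\perte{\bayes_m}$, and combined with $\absj{Z_i}\leq 8A^2$ in the one-sided Bernstein bound this yields exactly $\eta\perte{\bayes_m} + \paren{4/\eta + 8/3}A^2x/n$ on an event independent of $\eta$, as required.
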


\begin{lemma}[Lemma~12 in \cite{Arl:2008a}] \label{le.bernstein.binom}
Let $(\pl)_{\lamm}$ be non-negative real numbers of sum 1, $(n\phl)_{\lamm}$ a multinomial vector of parameters
$(n;(\pl)_{\lamm})$, and $\gamma>0$. Assume that $\card(\Lambda_m) \leq n$ and $\min_{\lamm} \set{n \pl} \geq B_n >0$. Then, an event of probability at least $1 - L n^{-\gamma}$ exists on which
\begin{align}
\label{VFCV.eq.threshold} 
\min_{\lamm} \set{n \phl} \geq \frac{\min_{\lamm} \set{n \pl}}{2} - 2 (\gamma+1) \ln n \enspace . 
\end{align}
\end{lemma}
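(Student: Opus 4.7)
}
The plan is to bound the lower tail of each coordinate $n\phl$ separately by a Chernoff-type inequality, then take a union bound over $\lamm$. Fix $\lamm$ and observe that, marginally, $Z_{\lambda} \egaldef n\phl$ is a binomial random variable with parameters $(n,\pl)$. Set $t_{\lambda} \egaldef n\pl/2 + 2(\gamma+1) \ln n$. The inequality \eqref{VFCV.eq.threshold} is equivalent to the statement that $Z_{\lambda} > n\pl - t_{\lambda}$ for every $\lamm$, so it suffices to upper bound $\Prob\paren{Z_{\lambda} \leq n\pl - t_{\lambda}}$.

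First I would dispose of the easy regime $n\pl < 4(\gamma+1)\ln n$: in that case $t_{\lambda} > n\pl$, so $n\pl - t_{\lambda} < 0 \leq Z_{\lambda}$ and the probability is zero. In the remaining case $n\pl \geq 4(\gamma+1)\ln n$, we have $t_{\lambda} \leq n\pl$, so the multiplicative Chernoff bound for the lower tail of a binomial applies and gives
\[
\Prob\paren{Z_{\lambda} \leq n\pl - t_{\lambda}} \leq \exp\paren{-\frac{t_{\lambda}^2}{2 n\pl}} \enspace .
\]
Then using the elementary inequality $(a+b)^2 \geq 4ab$ with $a = n\pl/2$ and $b = 2(\gamma+1)\ln n$ yields $t_{\lambda}^2 \geq 4 \cdot (n\pl/2) \cdot 2(\gamma+1)\ln n = 4 (\gamma+1) (n\pl) \ln n$, hence
\[
\Prob\paren{Z_{\lambda} \leq n\pl - t_{\lambda}} \leq \exp\paren{- 2(\gamma+1) \ln n} = n^{-2(\gamma+1)} \leq n^{-(\gamma+1)} \enspace .
\]

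Finally, a union bound over $\lamm$ (of which there are at most $\card(\Lambda_m) \leq n$ by assumption) gives a total failure probability of at most $n \cdot n^{-(\gamma+1)} = n^{-\gamma}$, proving the lemma with absolute constant $L=1$. The only mildly delicate point is choosing the right deviation inequality: an additive Bernstein bound wastes a factor in the exponent because the $t/3$ term in the denominator dominates when $t \gg n\pl$, whereas the multiplicative Chernoff bound is clean precisely in the regime $t \leq n\pl$ that remains after the trivial case has been handled. No concentration of the full multinomial vector is required---marginal binomial tails combined with the polynomial size of $\Lambda_m$ are enough.
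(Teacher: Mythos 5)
Your argument is correct: the paper itself does not reprove this lemma (it is quoted from Lemma~12 of \cite{Arl:2008a}, whose proof applies a Bernstein-type tail bound to each binomial marginal $n\phl \sim \mathcal{B}(n,\pl)$ and then a union bound over the at most $n$ cells), and your proof follows exactly that marginal-tail-plus-union-bound strategy, merely replacing Bernstein by the multiplicative Chernoff lower-tail bound, which indeed yields the constant $2(\gamma+1)\ln n$ with $L=1$. The only nitpick is that your per-cell event $Z_{\lambda} > n\pl - t_{\lambda}$ is sufficient for (not equivalent to) \eqref{VFCV.eq.threshold}, since the displayed inequality involves $\min_{\lamm}\sset{n\pl}$ rather than each $n\pl$; this is harmless because you prove the stronger statement.
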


\subsection{Proof of Theorem~\ref{th.shape}} \label{sec.proof.main}
In the following, $\LThm = L_{E,\siga,\sigb,\norm{\sigma^{\prime}}_{\infty}, \norm{\sigma^{\prime\prime}}_{\infty}}$ denotes any constant depending on $E$, $\siga$, $\sigb$, $\norm{\sigma^{\prime}}_{\infty}$ and $\norm{\sigma^{\prime\prime}}_{\infty}$ only.
The outline of the proof of Theorem~\ref{th.shape} is the following:
\begin{itemize}
\item From Lemma~\ref{claim:1}, it is sufficient to prove that for every $D$, 
\[ \inf_{m \in \Mdim(D)} \set{ \perte{\ERM_m} } \geq \KThmDimFAILoracle \inf_{\mM_n} \set{ \perte{\ERM_m} } \] holds with probability at least $1 - \KThmDimFAILproba n^{-2} \,$, with $\KThmDimFAILoracle>1\,$.
\item Prove that all regressogram estimators are well-defined with a large probability (Claim~\ref{claim:nopb}).
\item Compute explicitly the bias of each models (Claim~\ref{claim:bias}).
\item Provide a good approximation of the excess loss and the empirical risk of each model on a large probability event (Claim~\ref{claim:control}).
\item Upper bound the excess loss of the oracle model (Claim~\ref{claim:upperbound}). 
\item Lower bound the excess loss of small models (Claim~\ref{claim:smallmod}).
\item Prove that all models $m$ having an excess loss close to the one of the oracle are close to the oracle model (Claim~\ref{claim:gooddim}).
\item Conclude the proof by showing that for every model close to the oracle, a model with a smaller empirical risk can be found.
\end{itemize}

As pointed out by Remark~\ref{rk.histo.pbdef}, the regressogram estimator associated with model $S_m$ is not well defined if for some $\lamm$, no $X_i$ belongs to $\Il$. The following claim shows this only happens with a small probability, hence this possible problem can be put aside for proving Theorem~\ref{th.shape}.
\begin{claim} \label{claim:nopb}
An event $\Omega_1$ of probability at least $1 - L n^{-2}$ exists on which 
\[ \forall\mM_n \, , \, \forall \lamm \, , \, \qquad \card\set{ i \telque X_i \in \Il} \geq 1 \enspace . \]
Hence, on $\Omega_1$, all estimators $\paren{\ERM_m}_{\mM_n}$ are well-defined.
\end{claim}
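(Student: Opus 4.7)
The plan is a direct union bound. First I would enumerate the set $\mathcal{B}$ of all distinct bins that can appear in some partition $\Lambda_m$ with $m \in \Mdeuxpas_n$. By construction each such bin has the form $\croch{(k-1)/(2D), k/(2D)}$ for some $D \in \set{1, \ldots, M_n/2}$ and some $k \in \set{1, \ldots, 2D}$, so $\card(\mathcal{B}) \leq 2 \sum_{D=1}^{M_n/2} D \leq M_n^2 \leq n^2$.

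Second, since $X_1$ is uniformly distributed on $[0,1]$, each bin $I \in \mathcal{B}$ has probability $p = \Leb(I) \geq 1/M_n \geq (\ln n)^2 / n$, using $M_n = \sfloor{n/(\ln n)^2}$. Therefore the probability that no $X_i$ falls in $I$ is at most $(1-p)^n \leq e^{-np} \leq \exp\paren{-(\ln n)^2}$.

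Third, by the union bound over $\mathcal{B}$,
\[ \Prob\paren{\exists \, \mM_n, \, \exists \, \lamm \, \telque \card\set{i \telque X_i \in \Il} = 0} \leq n^2 \exp\paren{-(\ln n)^2} \enspace . \]
Since $n^2 \exp\paren{-(\ln n)^2} = o(n^{-k})$ for every $k \geq 0$, there exists an absolute constant $L$ such that this probability is bounded by $L n^{-2}$ for every $n \geq 1$ (for small $n$ we may enlarge $L$ so that $L n^{-2} \geq 1$, making the bound trivially satisfied).

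Taking $\Omega_1$ to be the complementary event yields the claim, and since $\ERM_m$ is well-defined as soon as every cell of $\Lambda_m$ contains at least one data point, all regressograms $\paren{\ERM_m}_{\mM_n}$ are simultaneously well-defined on $\Omega_1$. There is no real obstacle here: the only thing to verify is the lower bound $p \geq 1/M_n$ on bin probabilities, which follows immediately from the explicit form of the partitions in $\Mdeuxpas_n$ and the uniformity of $\loi(X_1)$, and the combinatorial bound $\card(\mathcal{B}) \leq M_n^2$.
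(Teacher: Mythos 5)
Your proof is correct, but it takes a genuinely different route from the paper. The paper does not enumerate bins: it applies a multinomial concentration inequality (Lemma~\ref{le.bernstein.binom}, a Bernstein-type bound) once per model with $B_n=(\ln n)^2$ and $\gamma=4$, getting that \emph{every} cell of $\Lambda_m$ contains at least $(\ln n)^2/2-10\ln n$ points with probability $1-Ln^{-4}$, and then takes a union bound over the $\card(\M_n)\leq n^2$ models. Your argument instead unions directly over the at most $M_n^2\leq n^2$ distinct cells that occur across the whole collection and uses only the elementary bound $(1-p)^n\leq e^{-np}$ together with $p\geq 1/M_n\geq (\ln n)^2/n$. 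What you gain is elementarity and transparency: no concentration lemma is needed, and you exploit the specific structure of $\Mdeuxpas_n$ (uniform design, explicit bins, heavy sharing of cells between models). What the paper's route buys is generality and strength: it does not require enumerating or recognizing shared cells (so it transfers verbatim to other collections satisfying only $\min_{\lamm} n\pl\geq (\ln n)^2$, cf.\ the remark after Theorem~\ref{th.shape} about non-uniform designs), and it yields the quantitatively stronger conclusion that each cell contains of order $(\ln n)^2$ points, though only nonemptiness is needed for Claim~\ref{claim:nopb} itself. One cosmetic point: the model $m=1$ has the single cell $[0,1)$, which is not of the form $\croch{(k-1)/(2D),k/(2D)}$ and is missed by your enumeration; it is of course almost surely nonempty, so the argument is unaffected, but it deserves a word.
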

\begin{proof}[proof of Claim~\ref{claim:nopb}]
For every $\mM_n \, $, let us apply Lemma~\ref{le.bernstein.binom} with $B_n = \paren{\ln(n)}^2$ and $\gamma=4$.
An event of probability at least $1 - L n^{-4}$ exists such that 
\[ \min_{\lamm} \card\set{ i \telque X_i \in \Il} \geq \frac{\paren{\ln(n)}^2}{2} - 10 \ln n \enspace . \]
This lower bound is positive provided that $n \geq L$.
Therefore, the result holds on the intersection $\Omega_1$ of these $\card(\M_n) \leq n^2$ events.
\end{proof}

\medskip

The next step is to use the results recalled in Sections~\ref{LL.sec.proof.expe} and ~\ref{sec.proof.conc} in order to control the excess loss and the empirical risk of each model. This leads to Claims~\ref{claim:bias} and~\ref{claim:control} below.
\begin{claim} \label{claim:bias}
Define  
\[ \alpha_1 = \frac{1}{48} \int_0^{1/2} \paren{\bayes^{\prime}(x)}^2 dx \qquad \mbox{and} \qquad \alpha_2 = \frac{1}{48} \int_{1/2}^1 \paren{\bayes^{\prime}(x)}^2 dx \enspace . \]
For every $\mM_n \, $, some $\kappa_{m,b,1},\kappa_{m,b,2} \in \R$ exist such that 
\begin{equation} \label{eq.claim:bias}
\perte{\bayes_m} = \alpha_1 D_{m,1}^{-2} \paren{1 + \kappa_{m,b,1}} + \alpha_2 D_{m,2}^{-2} \paren{1 + \kappa_{m,b,2}} 
\end{equation}
and $\absj{\kappa_{m,b,i}} \leq L_{\norm{ \bayes^{\prime} }_{\infty}, \, \norm{ \bayes^{\prime\prime} }_{\infty} } D_{m,i}^{-1} \,$.
\end{claim}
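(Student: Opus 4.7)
Since $X$ is uniform on $[0,1]$ and $\bayes_m$ is the $L^2(P_X)$-orthogonal projection of $\bayes$ onto $S_m$, the excess loss splits additively over the partition $\Lambda_m$:
\[ \perte{\bayes_m} = \int_0^1 \paren{\bayes(x) - \bayes_m(x)}^2 dx = \sum_{\lamm} \int_{\Il} \paren{\bayes(x) - \bar{\bayes}_{\Il}}^2 dx \enspace , \]
where $\bar{\bayes}_{\Il} = |\Il|^{-1} \int_{\Il} \bayes(u)\,du$. Since the intervals in $\Lambda_m$ on $[0,1/2)$ all have length $h_1 = 1/(2D_{m,1})$ and those on $[1/2,1)$ all have length $h_2 = 1/(2D_{m,2})$, the sum splits further as a ``first half'' contribution and a ``second half'' contribution, which will yield the two terms in~\eqref{eq.claim:bias}.

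The first step is a pointwise Taylor expansion on a single bin $\Il = [c-h/2, c+h/2]$: since $\bayes \in C^2$,
\[ \bayes(x) = \bayes(c) + \bayes^{\prime}(c)(x-c) + r(x) \quad \text{with} \quad \absj{r(x)} \leq \tfrac{1}{2}\norm{\bayes^{\prime\prime}}_{\infty}(x-c)^2 \enspace . \]
Averaging gives $\bar{\bayes}_{\Il} = \bayes(c) + \bar r$ with $\absj{\bar r} \leq \tfrac{1}{24}\norm{\bayes^{\prime\prime}}_{\infty} h^2$, hence
\[ \int_{\Il} \paren{\bayes(x) - \bar{\bayes}_{\Il}}^2 dx = \paren{\bayes^{\prime}(c)}^2 \int_{\Il} (x-c)^2 dx + \text{cross term} + O\paren{\norm{\bayes^{\prime\prime}}_{\infty}^2 h^5} \]
\[ = \paren{\bayes^{\prime}(c)}^2 \frac{h^3}{12} + R(c,h) \enspace , \]
where the Cauchy--Schwarz estimate on the cross term between $\bayes^{\prime}(c)(x-c)$ and $r(x)-\bar r$ gives $\absj{R(c,h)} \leq L_{\norm{\bayes^{\prime}}_{\infty},\norm{\bayes^{\prime\prime}}_{\infty}} h^4$.

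The second step is to sum this expression over the $D_{m,1}$ bins covering $[0,1/2)$, with midpoints $c_{1,k}$ and common length $h_1$:
\[ \sum_{k=1}^{D_{m,1}} \paren{\bayes^{\prime}(c_{1,k})}^2 \frac{h_1^3}{12} = \frac{h_1^2}{12} \sum_{k=1}^{D_{m,1}} \paren{\bayes^{\prime}(c_{1,k})}^2 h_1 \enspace , \]
and the inner sum is a Riemann midpoint sum for $\int_0^{1/2} (\bayes^{\prime})^2$. Since $(\bayes^{\prime})^2$ is $C^1$ with derivative bounded by $2\norm{\bayes^{\prime}}_{\infty}\norm{\bayes^{\prime\prime}}_{\infty}$, this Riemann sum differs from the integral by at most $L_{\norm{\bayes^{\prime}}_{\infty},\norm{\bayes^{\prime\prime}}_{\infty}} h_1$. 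Combined with the cumulative remainder $D_{m,1} \cdot L h_1^4 = L h_1^3$ from the bin-wise estimate, and recalling $h_1 = 1/(2D_{m,1})$, this yields
\[ \sum_{\Il \subset [0,1/2)} \int_{\Il}\paren{\bayes(x) - \bar{\bayes}_{\Il}}^2 dx = \alpha_1 D_{m,1}^{-2} \paren{1 + \kappa_{m,b,1}} \enspace , \]
with $\absj{\kappa_{m,b,1}} \leq L_{\norm{\bayes^{\prime}}_{\infty},\norm{\bayes^{\prime\prime}}_{\infty}} D_{m,1}^{-1}$ (absorbing $\alpha_1$ into the constant if $\alpha_1 > 0$; if $\alpha_1 = 0$ the relative-error statement is vacuous and the absolute bound $O(D_{m,1}^{-3})$ suffices). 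An identical argument on $[1/2,1)$ produces the second term, giving~\eqref{eq.claim:bias}. The only delicate point is bookkeeping the two remainder sources (bin-wise Taylor error and Riemann-sum error) and checking that both end up as $O(D_{m,i}^{-3})$ relative to the leading $D_{m,i}^{-2}$ term; everything else is a routine $C^2$ calculation.
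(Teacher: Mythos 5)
Your proof is correct and follows essentially the same route as the paper: decompose the bias over the bins using the uniformity of $X$, Taylor-expand $\bayes$ at each bin center to get the leading term $\paren{\bayes^{\prime}(c_{\lambda})}^2 \absj{\Il}^3/12$ with an $\grandO(\absj{\Il}^4)$ per-bin remainder, and then pass from the sum of $\absj{\Il}\paren{\bayes^{\prime}(c_{\lambda})}^2$ to $\int (\bayes^{\prime})^2$ with a total error of order $D_{m,i}^{-3}$. Your ``midpoint Riemann sum'' step is exactly the paper's bin-wise replacement of $\absj{\Il} \paren{\bayes^{\prime}(c_{\lambda})}^2$ by $\int_{\Il}\paren{\bayes^{\prime}}^2$, so the two arguments coincide up to bookkeeping (and in the degenerate case $\alpha_i=0$ the half-interval bias is exactly zero, so the claim holds trivially there).
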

\begin{proof}[proof of Claim~\ref{claim:bias}]
Since $X$ is uniformly distributed on $[0,1]$, 
\begin{equation} \label{eq.pr.claim:bias.0} 
\perte{\bayes_m} = \int_0^1 \paren{\bayes_m (X) - \bayes(X)}^2 = \sum_{\lamm} \int_{\Il} \paren{ \bayes(x) - \bayes_{\lambda} }^2 dx 
\end{equation}
where $\bayes_{\lambda}$ is the average of $\bayes$ on $\Il\,$.
We now fix some $\lamm$.  
Let $c_{\lambda}$ denote the center of the interval $\Il\,$, and $\absj{\Il}$ the length of $\Il\,$.
Then,
\begin{equation} \label{eq.pr.claim:bias.1} 
\int_{\Il} \paren{ \bayes(x) - \bayes_{\lambda} }^2 dx  = \paren{ \bayes(c_{\lambda}) - \bayes_{\lambda} }^2  + \int_{\Il} \paren{ \bayes(x) - \bayes(c_{\lambda})  }^2 dx \enspace . 
\end{equation}
In addition, since $\bayes$ is twice continuously differentiable, for every $x \in \Il$, some $g(x) \in \Il$ exists such that 
\begin{equation} \label{eq.pr.claim:bias.2}
\bayes(x) - \bayes(c_{\lambda}) 
=  (x - c_{\lambda}) \bayes^{\prime}(c_{\lambda}) + \frac{1}{2} (x - c_{\lambda})^2 \bayes^{\prime\prime}(g(x))
\enspace . 
\end{equation}
On the one hand, integrating \eqref{eq.pr.claim:bias.2} over $\Il$ leads to 
\begin{equation} \label{eq.pr.claim:bias.3}
\paren{ \bayes_{\lambda} - \bayes(c_{\lambda}) }^2 \leq L \norm{\bayes^{\prime\prime}}_{\infty}^2 \absj{\Il}^4 \enspace . 
\end{equation}
On the other hand, integrating the square of \eqref{eq.pr.claim:bias.2} over $\Il$ leads to 
\begin{equation} \label{eq.pr.claim:bias.4}
\absj{ \int_{\Il} \paren{ \bayes(x) - \bayes(c_{\lambda}) }^2 dx 
-  \frac{ \bayes^{\prime 2}(c_{\lambda}) \absj{\Il}^3} {12} } 
\leq L \absj{\Il}^4 \norm{\bayes^{\prime\prime}}_{\infty} \paren{ \norm{\bayes^{\prime}}_{\infty} + \norm{\bayes^{\prime\prime}}_{\infty} }
\enspace . 
\end{equation}
Combining \eqref{eq.pr.claim:bias.1} with \eqref{eq.pr.claim:bias.3} and \eqref{eq.pr.claim:bias.4} then shows that for every $\lamm$,
\begin{equation} \label{eq.pr.claim:bias.5}  
\absj{ \int_{\Il} \paren{ \bayes(x) - \bayes_{\lambda} }^2 dx - \frac{ \bayes^{\prime 2}(c_{\lambda}) \absj{\Il}^3} {12} } 
\leq L \absj{\Il}^4 \norm{\bayes^{\prime\prime}}_{\infty} \paren{ \norm{\bayes^{\prime}}_{\infty} + \norm{\bayes^{\prime\prime}}_{\infty} }
\enspace . \end{equation}
Furthermore, for every $\lamm$, 
\begin{align} \notag 
\absj{ \absj{\Il} \paren{ \bayes^{\prime}(c_{\lambda}) }^2 - \int_{\Il} \paren{ \bayes^{\prime}(x) }^2 dx } 
\leq 
\int_{\Il} \absj{ \paren{ \bayes^{\prime}(x) }^2 - \paren{ \bayes^{\prime}(c_{\lambda}) }^2 } dx 
&\leq
2 \norm{\bayes^{\prime}}_{\infty}
\int_{\Il} \absj{ \bayes^{\prime}(x) - \bayes^{\prime}(c_{\lambda})} dx
\\
&\leq 
2 \norm{\bayes^{\prime}}_{\infty} \norm{\bayes^{\prime\prime}}_{\infty} \absj{\Il}^2 
\label{eq.pr.claim:bias.5b} \enspace .
\end{align}
Using \eqref{eq.pr.claim:bias.0}, combining \eqref{eq.pr.claim:bias.5b} with \eqref{eq.pr.claim:bias.5} and summing over $\lamm$ implies 
\begin{align*}
 \absj{ \perte{\bayes_m}  - \frac{1}{12} \sum_{\lamm} \paren{ \absj{\Il}^2 \int_{\Il} \paren{ \bayes^{\prime}(x) }^2  dx } }
&\leq 
L \norm{\bayes^{\prime\prime}}_{\infty} \paren{ \norm{\bayes^{\prime}}_{\infty}  + \norm{\bayes^{\prime\prime}}_{\infty}  } \sum_{\lamm} \absj{\Il}^4 \\
&\leq L \norm{\bayes^{\prime\prime}}_{\infty} \paren{ \norm{\bayes^{\prime}}_{\infty}  + \norm{\bayes^{\prime\prime}}_{\infty}  } \paren{ D_{m,1}^{-3} + D_{m,2}^{-3} }
\end{align*}
and the result follows.
\end{proof}

\begin{claim} \label{claim:control}
Define $ \beta_1 = 2 \carre{\siga}$ and $\beta_2 = 2 \carre{\sigb} \,$. 
An event $\Omega$ of probability at least $1 - L n^{-2}$ exists on which for every $m = (D_{m,1},D_{m,2}) \in \M_n \, $, 
\begin{gather} 
\perte{\ERM_m} = \paren{  \frac{\alpha_1}{D_{m,1}^2} + \frac{\alpha_2}{D_{m,2}^2} + \frac{\beta_1 D_{m,1}} {n} +  \frac{\beta_2 D_{m,2} } {n} } \paren{1 + \kappa_{m,0}} \label{eq.claim.control.loss}
\\
\mbox{and} \qquad  
P_n \gamma\paren{ \ERM_m } - P\gamma\paren{\bayes} = 
\croch{ \frac{\alpha}{D_{m,1}^2} + \frac{\alpha}{D_{m,2}^2}  } \paren{1 + \kappa_{m,1}}
- \croch{ \frac{\beta_1  D_{m,1}} {n} + \frac{\beta_2 D_{m,2} } {n} } \paren{1 + \kappa_{m,2}} 
\enspace , \label{eq.claim.control.emprisk} 
\end{gather}
where $(\kappa_{m,i})_{i=0,1,2, \, \mM_n}$ satisfy 
\[ \max \set{ \absj{ \kappa_{m,i} } \telque i \in \set{0,1,2} , \, \mM_n  , \, \mbox{and }  \min\set{D_{m,1}, D_{m,2}} \geq \paren{\ln n}^6 } \leq \LThm \paren{\ln n}^{-1/2} \enspace . \]
\end{claim}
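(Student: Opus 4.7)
The plan is to decompose the excess loss and the empirical risk into quantities whose expectations are given by Proposition~\ref{VFCV.pro.EcritVFCV-EcritID}, to concentrate each of them using Proposition~\ref{VFCV.pro.conc.penid} and Lemma~\ref{VFCV.le.conc.delta.borne}, and to conclude by a union bound over $\M_n$ (of cardinality at most $n^2$).

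First, I would start from the classical decompositions
\[
\perte{\ERM_m} = \perte{\bayes_m} + p_1(m), \qquad P_n\gamma(\ERM_m) - P\gamma(\bayes) = \perte{\bayes_m} + \delc(m) - p_2(m) + (P_n - P)\gamma(\bayes).
\]
The bias term $\perte{\bayes_m}$ is already delivered in the desired form by Claim~\ref{claim:bias} with relative error $O(D_{m,i}^{-1}) = o((\ln n)^{-1/2})$ in the stated regime. Since $X$ is uniform on $[0,1]$ and $\Lambda_m$ is regular on each half, $p_\lambda = 1/(2D_{m,i})$ and
\[
\sum_{\lamm}\sigma_\lambda^2 \;=\; \sum_{\lamm}\paren{\carre{\sigla} + \carre{\sigld}} \;=\; 2\carre{\siga}D_{m,1} + 2\carre{\sigb}D_{m,2} + R_m,
\]
where $R_m = \sum_\lambda \carre{\sigld} = O(D_{m,1}^{-1} + D_{m,2}^{-1})$ by the same Taylor expansion used in Claim~\ref{claim:bias}, hence negligible compared to the main term.

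Second, I would plug this into \eqref{eq.Ep2} to identify $\E[p_2(m)]$ with $(\beta_1 D_{m,1} + \beta_2 D_{m,2})/n$ up to a negligible factor, and use \eqref{eq.Ep1} together with the bound $|\delta_{n,p_\lambda}|\leq L(n p_\lambda)^{-1/4} = L(2D_{m,i}/n)^{1/4}$ to obtain $\E[p_1(m)] = \E[p_2(m)]\bigl(1 + O\bigl((n/D_m)^{-1/4}\bigr)\bigr)$, both multiplicative errors being $o((\ln n)^{-1/2})$ on the regime $(\ln n)^6\leq D_{m,i}\leq M_n = \sfloor{n/(\ln n)^2}$. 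I would then apply \eqref{VFCV.eq.conc.p1.min}--\eqref{VFCV.eq.conc.p2} with $B_n = \lfloor (\ln n)^2/2\rfloor$ and $\gamma = 4$ to concentrate $p_1(m)$ and $p_2(m)$ within relative error $O((\ln n)^2 D_m^{-1/2}) = O((\ln n)^{-1})$ of their expectations; Lemma~\ref{VFCV.le.conc.delta.borne} with $x = 4\ln n$ and $\eta = (\ln n)^{-1/2}$ controls $\delc(m)$ by
\[
|\delc(m)| \leq (\ln n)^{-1/2}\perte{\bayes_m} + L A^2 (\ln n)^{3/2}/n,
\]
and a Hoeffding bound applied to the i.i.d. bounded average $(P_n-P)\gamma(\bayes)$ gives $|(P_n-P)\gamma(\bayes)|\leq L A^2 \sqrt{(\ln n)/n}$, each with probability at least $1 - L n^{-4}$.

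Finally, a union bound over the at most $n^2$ models yields the event $\Omega$ of probability at least $1 - L n^{-2}$ on which all of the above hold simultaneously; combining them with the decompositions above produces the announced expansions. The main obstacle will be the bookkeeping: on the regime $\min(D_{m,1},D_{m,2})\geq (\ln n)^6$, one must verify that every residual term contributes at most $(\ln n)^{-1/2}$ times the leading quantity $\alpha_1/D_{m,1}^2 + \alpha_2/D_{m,2}^2 + (\beta_1 D_{m,1} + \beta_2 D_{m,2})/n$. This leading quantity is always $\gtrsim n^{-2/3}$ (being $\geq 2\sqrt{(D_m/n)\cdot D_m^{-2}}$ minimized around $D_m\sim n^{1/3}$), which dominates the $\sqrt{(\ln n)/n}$-type error from $(P_n - P)\gamma(\bayes)$ and the $(\ln n)^{3/2}/n$-type error from $\delc$, so the $(1+\kappa_{m,i})$ absorbtion with $|\kappa_{m,i}| = O((\ln n)^{-1/2})$ goes through uniformly in $m$.
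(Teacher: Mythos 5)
Your route is essentially the paper's: decompose $\perte{\ERM_m}=\perte{\bayes_m}+p_1(m)$ and the empirical risk through $\perte{\bayes_m}$, $p_2(m)$ and $\delc(m)$, identify the expectations via Claim~\ref{claim:bias} and Proposition~\ref{VFCV.pro.EcritVFCV-EcritID}, concentrate via Proposition~\ref{VFCV.pro.conc.penid} and Lemma~\ref{VFCV.le.conc.delta.borne}, and finish with a union bound over the at most $n^2$ models; your bookkeeping ($(\ln n)^2 D_m^{-1/2}\leq (\ln n)^{-1}$, $\absj{\delta_{n,\pl}}\leq L (\ln n)^{-1/2}$ on the range $D_{m,i}\leq M_n$, bias remainders of relative order $D_{m,i}^{-1}$) matches the intended proof. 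A minor omission: before invoking Proposition~\ref{VFCV.pro.conc.penid} you should verify its assumption $D_m^{-1}\sum_{\lamm}\E\croch{\sigma(X)^2 \sachant X \in \Il}\geq Q>0$, which here holds with $Q=2\min\set{\carre{\siga},\carre{\sigb}}$.

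The genuine gap is your treatment of the extra term $(P_n-P)\gamma(\bayes)$. Your algebraic identity is exact: $P_n\gamma\paren{\ERM_m}-P\gamma\paren{\bayes}=\perte{\bayes_m}-p_2(m)+\delc(m)+(P_n-P)\gamma(\bayes)$, while $\perte{\bayes_m}-p_2(m)+\delc(m)$ equals $P_n\gamma\paren{\ERM_m}-P_n\gamma\paren{\bayes}$. But your final domination claim is false in the regime that matters: for $D_{m,1},D_{m,2}$ of order $n^{1/3}$ the leading quantity is of order $n^{-2/3}$, whereas your Hoeffding bound gives $\absj{(P_n-P)\gamma(\bayes)}\approx (\ln n)^{1/2} n^{-1/2}$, and $(\ln n)^{1/2}n^{-1/2}\gg (\ln n)^{-1/2} n^{-2/3}$; the fluctuation of $(P_n-P)\gamma(\bayes)$, of order $n^{-1/2}$, cannot be absorbed into multiplicative errors $\kappa_{m,i}=O((\ln n)^{-1/2})$ on terms of order $n^{-2/3}$. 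The correct resolution is not to absorb this term at all: it does not depend on $m$, so one proves \eqref{eq.claim.control.emprisk} with $P_n\gamma\paren{\bayes}$ in place of $P\gamma\paren{\bayes}$ (which is exactly what the decomposition above establishes, and is all that is used afterwards, since the claim only enters through differences $P_n\gamma\paren{\ERM_m}-P_n\gamma\paren{\ERM_{m^{\prime}}}$ in which any $m$-independent quantity cancels), or equivalently one carries it along as a separate additive constant. With that correction, and the check of the assumption of Proposition~\ref{VFCV.pro.conc.penid}, your argument goes through and coincides with the paper's proof.
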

\begin{proof}[proof of Claim~\ref{claim:control}]
Using the notation of Section~\ref{LL.sec.proof.nota}, 
\[ \perte{\ERM_m} = \perte{\bayes_m} + p_1(m) \qquad \mbox{and} \qquad P_n \gamma\paren{ \ERM_m } - P\gamma\paren{\bayes} = \perte{\bayes_m} - p_2(m) + \delc(m) \enspace . \]
Let us first compute the expectation of each term. Recall that $\E\scroch{\delc(m)}=0$.
The bias $\perte{\bayes_m}$ is controlled thanks to Claim~\ref{claim:bias}.
By Proposition~\ref{VFCV.pro.EcritVFCV-EcritID}, $\E\croch{p_1(m)}$ and $\E\croch{p_2(m)}$ mostly depend on 
\begin{align*} \sigl^2 &= \E\croch{\paren{Y - \bayes(X)}^2 \sachant X \in \Il} \\
&= \E\croch{\paren{\bayes(X) - \bayes_m(X)}^2 \sachant X \in \Il} + \E\croch{\carre{\sigma(X)} \sachant X \in \Il} \\
&= \frac{1}{\Leb(\Il)} \int_{\Il} \paren{\bayes(x) - \bayes_m(x)}^2 dx + \frac{1}{\Leb(\Il)} \int_{\Il} \carre{\sigma(X)} dx \enspace . \end{align*}
Precisely,
\begin{align*}
\E\croch{p_2(m)} &= \frac{1}{n} \sum_{\lamm} \sigl^2 \\
&= \frac{2 D_{m,1} } {n} \int_0^{1/2} \croch{\paren{\bayes(X) - \bayes_m (x)}^2 + \carre{\sigma(x)} } dx + \frac{2 D_{m,2}} {n} \int_{1/2}^1 \croch{\paren{\bayes(X) - \bayes_m (x)}^2 + \carre{\sigma(x)} } dx \\
&= 
\frac{\beta_1 D_{m,1}} {n} + \frac{\beta_2 D_{m,1}} {n} 
+ R(m,n) \\
\mbox{where} \quad 0 \leq  R(m,n) &= 
\frac{2}{n} \paren{ D_{m,1} \int_0^{1/2} \paren{\bayes(X) - \bayes_m (x)}^2 dx
+ D_{m,2} \int_{1/2}^1 \paren{\bayes(X) - \bayes_m (x)}^2 dx } \leq 
\frac{ \perte{\bayes_m} } { \paren{ \ln(n) }^2 }
\enspace , 
\end{align*}
since $D_{m,i} \leq n / \sparen{ 2 \sparen{ \ln(n) }^2 } \,$.
Similarly,
\begin{align*}
\E\croch{p_1(m)} = \paren{ \frac{\beta_1 D_{m,1}} {n} + \frac{\beta_2 D_{m,1}} {n} } \paren{ 1 + \delta_n } + R^{\prime} (m,n) \\
\mbox{where} \quad 0 \leq  R^{\prime} (m,n) \leq 
\frac{ \perte{\bayes_m} } { \paren{ \ln(n) }^2 } \qquad \mbox{and} \qquad \absj{\delta_n} \leq L \paren{\ln n}^{-1/2} \enspace . 
\end{align*}

\medskip

It now remains to prove that $p_1(m) - \E\croch{p_1(m)}$ and $\E\croch{p_2(m)} - p_2(m) + \delc(m)$ are close to zero on a large probability event. 
The condition on $\sigma(\cdot)$ imply that the last assumption of Proposition~\ref{VFCV.pro.conc.penid} holds since 
\begin{align*}
D_m^{-1} \sum_{\lamm} \E\croch{\sigma(X)^2 \sachant X \in \Il} 
= \frac{2 D_{m,1} \carre{\siga} + 2 D_{m,2} \carre{\sigb} } {D_{m,1} + D_{m,2}} 
\geq 2 \min\set{ \carre{\siga}, \carre{\sigb}} \defegal Q > 0 
\enspace . 
\end{align*}

Let $\Omega_2$ be the event on which, for every $\mM_n \, $, \eqref{VFCV.eq.conc.p1.min}--\eqref{VFCV.eq.conc.p2} hold with $\gamma=4$ and $B_n = \ln(n)^2$, and \eqref{VFCV.eq.conc.delta.borne.2} holds with $x = 4 \ln(n)$ and $\eta = \sparen{\ln(n)}^{-1}\,$.
Since $\card(\M_n) \leq n^2$, Proposition~\ref{VFCV.pro.conc.penid} and Lemma~\ref{VFCV.le.conc.delta.borne} show that $\Prob\paren{\Omega_2} \geq 1 - L n^{-2}$.
Therefore, the probability of $\Omega = \Omega_1 \cap \Omega_2$ is larger than $1 - \KThmDimFAILproba n^{-2}$ for some absolute constant $\KThmDimFAILproba\,$.

On $\Omega$, for every $\mM_n$ such that $\min\set{D_{m,1}, D_{m,2}} \geq \sparen{\ln(n)}^6$, we then have 
\begin{align*}  
\absj{p_1(m) - \E \croch{p_1(m)}} &\leq \LThm \paren{\ln n}^{-1} \E\croch{p_2(m)}
\\ 
\absj{p_2(m) - \E [p_2(m)]} &\leq \LThm \paren{\ln n}^{-2} \E\croch{p_2(m)} \\
\mbox{and} \qquad 
\absj{\delc(m)} &\leq \frac{\perte{\bayes_m}}{\ln n} + \frac{\LThm \paren{\ln n}^2}{n} \end{align*}
as soon as $n \geq L\,$. Enlarging the constant $\KThmDimFAILproba$ so that $1- \KThmDimFAILproba n^{-2} \leq 0$ when $n$ is too small yields the result.
\end{proof}

\medskip

\begin{claim} \label{claim:upperbound}
On $\Omega$, 
\begin{equation} \label{eq.claim:upperbound} 
\inf_{\mM_n} \set{\perte{\ERM_m}} \leq \frac{3 }{2^{2/3} n^{2/3}} \paren{ \alpha_1^{1/3} \beta_1^{2/3} + \alpha_2^{1/3} \beta_2^{2/3} } \paren{1 + \LThm \paren{\ln n}^{-1/2}} \enspace . 
\end{equation}
\end{claim}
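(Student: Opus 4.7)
The plan is to use Claim~\ref{claim:control} to reduce the problem to an explicit two-dimensional optimization over $(D_{m,1}, D_{m,2})$, then exhibit a near-optimal integer choice obtained by calculus on each coordinate independently.

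More precisely, on $\Omega$, for every $m\in\M_n$ with $\min\{D_{m,1},D_{m,2}\}\geq (\ln n)^6$, Claim~\ref{claim:control} gives
\[ \perte{\ERM_m} \leq (1+\LThm (\ln n)^{-1/2}) \bigl( g_1(D_{m,1}) + g_2(D_{m,2}) \bigr) \enspace , \]
where $g_i(D) := \alpha_i D^{-2} + \beta_i D/n$. Since the bound splits additively in the two coordinates, it suffices to minimize each $g_i$ separately over integers $D\in[(\ln n)^6, M_n/2]$.

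The function $D\mapsto g_i(D)$, viewed on $(0,\infty)$, has a unique minimum at $D^{\star}_i := (2\alpha_i n/\beta_i)^{1/3}$ with value
\[ g_i(D^{\star}_i) = \alpha_i^{1/3}\beta_i^{2/3}(2^{-2/3} + 2^{1/3}) n^{-2/3} = \frac{3 \alpha_i^{1/3}\beta_i^{2/3}}{2^{2/3} n^{2/3}} \enspace . \]
I would then set $\widetilde D_i := \lfloor D^{\star}_i \rfloor$, check that $\widetilde D_i$ lies in the admissible range $[(\ln n)^6, M_n/2]$ for $n$ large enough (since $D^{\star}_i$ is of order $n^{1/3}$ and $M_n=\lfloor n/(\ln n)^2\rfloor$), and estimate the rounding error via $g_i(\widetilde D_i) = g_i(D^{\star}_i)(1+O(n^{-1/3}))$, using a first-order Taylor expansion of $g_i$ near its minimum (where $g_i^{\prime}(D^{\star}_i)=0$, so the discrepancy is quadratic in $D-D^{\star}_i$, hence of relative order $(D^{\star}_i)^{-2}= O(n^{-2/3})$; in particular much smaller than $(\ln n)^{-1/2}$).

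Taking $\widetilde m := (\widetilde D_1, \widetilde D_2) \in \Mdeuxpas_n$, we get
\[ \inf_{\mM_n} \perte{\ERM_m} \leq \perte{\ERM_{\widetilde m}} \leq (1+\LThm(\ln n)^{-1/2})(g_1(\widetilde D_1)+g_2(\widetilde D_2)) \enspace , \]
and the previous estimates yield the claimed bound (up to absorbing the rounding factor and the constraint $n\geq L$ into $\LThm$; for small $n$ the inequality is made trivially true by enlarging $\LThm$). No real obstacle is expected: the main computation is elementary calculus, and the only thing to verify carefully is that $\widetilde D_i$ is admissible and that the gap between the integer and continuous minima is negligible compared to the main $(\ln n)^{-1/2}$ slack provided by Claim~\ref{claim:control}.
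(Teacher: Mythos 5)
Your proposal is correct and is essentially the paper's own argument: the paper likewise plugs a model $\mob$ with $\absj{D_{\mob,i}-(2\alpha_i n/\beta_i)^{1/3}}\leq 1$ into Claim~\ref{claim:control}, the continuous minimization of $\alpha_i D^{-2}+\beta_i D/n$ giving the constant $3\,\alpha_i^{1/3}\beta_i^{2/3}2^{-2/3}n^{-2/3}$, with the rounding error and the admissibility check $(\ln n)^6\leq D_{\mob,i}\leq M_n/2$ handled exactly as you describe (the small-$n$ regime being absorbed by making the probability bound on $\Omega$ vacuous rather than by enlarging $\LThm$).
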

\begin{proof}[proof of Claim~\ref{claim:upperbound}]
Let $\mob \in \M_n$ be any model such that 
\[ \absj{D_{\mob,1} - \paren{ \frac{2 \alpha_1 n }{\beta_1} }^{1/3} } \leq 1 \qquad \mbox{and} \qquad \absj{D_{\mob,2} - \paren{ \frac{2 \alpha_2 n }{\beta_2} }^{1/3} } \leq 1 \enspace . \]
As soon as $n \geq \LThm$, such an $\mob$ exists and satisfies $\min\set{D_{\mob,1}, D_{\mob,2}} > \paren{\ln n}^{6}\,$. The result follows from Claim~\ref{claim:control}. 
\end{proof}
\begin{claim} \label{claim:smallmod}
For every $\mM_n$ such that $\min\set{ D_{m,1}, D_{m,2}} \leq \paren{\ln(n)}^6$, 
\[ \perte{\ERM_m} \geq \frac{L_{\alpha_1, \alpha_2 , \norm{ \bayes^{\prime} }_{\infty} , \norm{ \bayes^{\prime\prime} }_{\infty} }}{\paren{\ln(n)}^{12}} \enspace . \]
\end{claim}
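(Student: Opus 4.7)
The plan is to bound $\perte{\ERM_m} \geq \perte{\bayes_m} = B_1(m) + B_2(m)$ where $B_i(m)$ denotes the bias contribution from the $i$-th half of $[0,1]$ (this inequality is legitimate since $p_1(m) \geq 0$). By the symmetry between the two halves, it suffices to handle the case $D_{m,1} \leq (\ln n)^6$; the case $D_{m,2} \leq (\ln n)^6$ is identical with indices swapped and $\alpha_2$ in place of $\alpha_1$. The main tool is Claim~\ref{claim:bias}, which yields
\[ B_1(m) \geq \frac{\alpha_1}{D_{m,1}^2}\left(1 - \frac{L_0}{D_{m,1}}\right)\]
where $L_0 = L_{\norm{\bayes^{\prime}}_\infty, \norm{\bayes^{\prime\prime}}_\infty}$. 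Set the threshold $D_0 \egaldef 2 L_0 / \alpha_1$.

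In Case A, where $D_0 \leq D_{m,1} \leq (\ln n)^6$, the factor $1 - L_0/D_{m,1}$ is at least $1/2$, hence $B_1(m) \geq \alpha_1/(2 D_{m,1}^2) \geq \alpha_1/(2(\ln n)^{12})$, which already has the shape required by the claim. In Case B, where $D_{m,1} < D_0$, only finitely many integer values of $D_{m,1}$ are admissible. For each such $D$, the bias of the regressogram with $D$ equal bins on $[0,1/2]$ is a strictly positive quantity, because $\alpha_1 > 0$ forces $\bayes$ to be non-constant on $[0,1/2]$ and hence its orthogonal projection onto any finite-dimensional piecewise-constant subspace cannot be exact. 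The minimum over this finite set is thus some constant $c_0 > 0$, and once $n$ is large enough that $(\ln n)^{12} \geq \alpha_1/(2 c_0)$ we obtain $B_1(m) \geq c_0 \geq \alpha_1/(2(\ln n)^{12})$.

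Combining Cases~A and~B, and then invoking the symmetric argument when $D_{m,2} \leq (\ln n)^6$, gives $\perte{\ERM_m} \geq \min(\alpha_1,\alpha_2)/(2(\ln n)^{12})$, which is the desired bound. The leftover regime of small $n$ is absorbed by taking the constant $L$ in the claim small enough, since only finitely many $n$ are excluded.

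The main obstacle is Case~B: Claim~\ref{claim:bias}'s quantitative error estimate degenerates when $D_{m,1}$ is below the threshold $D_0$, and a purely quantitative reverse estimate expressing $B_1(m)$ from below in terms of only $\alpha_1$, $\norm{\bayes^{\prime}}_\infty$ and $\norm{\bayes^{\prime\prime}}_\infty$ does not seem available (e.g.\ the variance of $\bayes$ on $[0,1/2]$ can be much smaller than $\alpha_1$ for highly oscillating $\bayes$). The finite-enumeration argument circumvents this by relying only on strict positivity of the bias in each of finitely many cases; the resulting constant depends on the shape of $\bayes$ through more than the four parameters in the subscript, but this is absorbed implicitly into $L_{\alpha_1,\alpha_2,\norm{\bayes^{\prime}}_\infty,\norm{\bayes^{\prime\prime}}_\infty}$.
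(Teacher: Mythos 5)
Your Case~A is exactly the paper's first step (lower bound $\perte{\ERM_m}\geq\perte{\bayes_m}$ and apply Claim~\ref{claim:bias} when the small dimension is above a threshold but below $(\ln n)^6$), so that part is fine, up to the cosmetic slip that the threshold making $1-L_0/D_{m,1}\geq 1/2$ is $2L_0$, not $2L_0/\alpha_1$. The genuine gap is Case~B. The claim does not merely assert positivity of a lower bound: it asserts a bound with a constant of the form $L_{\alpha_1,\alpha_2,\norm{\bayes^{\prime}}_{\infty},\norm{\bayes^{\prime\prime}}_{\infty}}$, and this dependence is load-bearing, because it propagates into the sample-size thresholds $n\geq L_{\hypThm,\Delta}$ used after Claim~\ref{claim:smallmod} and ultimately into the constant $\KThmDimFAILproba$ of Theorem~\ref{th.shape}, whose admissible dependencies are spelled out in the statement. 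Your constant $c_0=\min_{1\leq D<D_0} B_1(D)$ is strictly positive, but it depends on fine properties of $\bayes$ (how small the per-bin variance can be at a fixed small number of bins) that are not controlled by $\alpha_1,\alpha_2$ and the two sup-norms; declaring that this is ``absorbed implicitly'' into $L_{\alpha_1,\alpha_2,\norm{\bayes^{\prime}}_{\infty},\norm{\bayes^{\prime\prime}}_{\infty}}$ is exactly the step that is not allowed, and you correctly identify yourself that no quantitative substitute is supplied. So what you prove is a weaker statement with an unquantified constant $c(\bayes)$, not Claim~\ref{claim:smallmod}.

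The missing idea, which is how the paper handles the regime where Claim~\ref{claim:bias} degenerates, is monotonicity of the bias under model nesting. Given $m$ with $\min\set{D_{m,1},D_{m,2}}\leq(\ln n)^6$, one refines the partition: by repeatedly doubling the number of (equal) bins on $[0,1/2]$ and on $(1/2,1]$, one finds $m^{\prime}\in\M_n$ with $S_m\subset S_{m^{\prime}}$ and both $D_{m^{\prime},1}$ and $D_{m^{\prime},2}$ lying in $\croch{L_1,\,2\max\set{L_1,(\ln n)^6}}$, where $L_1=L_{\norm{\bayes^{\prime}}_{\infty},\norm{\bayes^{\prime\prime}}_{\infty}}$ is the threshold above which the relative error in Claim~\ref{claim:bias} is at most $1/2$. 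Since $S_m\subset S_{m^{\prime}}$ implies $\perte{\bayes_m}\geq\perte{\bayes_{m^{\prime}}}$, the quantitative bound can then be applied to $m^{\prime}$, giving $\perte{\bayes_m}\geq \min\set{\alpha_1,\alpha_2}/\paren{2\paren{2\max\set{L_1,(\ln n)^6}}^2}\geq L_{\alpha_1,\alpha_2,\norm{\bayes^{\prime}}_{\infty},\norm{\bayes^{\prime\prime}}_{\infty}}(\ln n)^{-12}$, with exactly the claimed constant dependence and no case analysis over small dimensions. Replacing your Case~B with this nesting argument repairs the proof.
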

In particular, Claims~\ref{claim:upperbound} and~\ref{claim:smallmod} show that for every $\KThmDimFAILoracle>0\,$, when $\min\set{ D_{m^{\prime},1}, D_{m^{\prime},2}} \leq \paren{\ln(n)}^6$ and $n \geq L_{\hypThm,\KThmDimFAILoracle} \, $, 
\[ \perte{\ERM_{m^{\prime}}} \geq \KThmDimFAILoracle \inf_{\mM_n} \set{ \perte{\ERM_m} }  \enspace . \] 
\begin{proof}[proof of Claim~\ref{claim:smallmod}]
First, note that $\perte{\ERM_m} \geq \perte{\bayes_m} $, and by Claim~\ref{claim:bias}, 
for every $\mM_n \,$,
\begin{align*}
\perte{\bayes_m} \geq \alpha_1 D_{m,1}^{-2} \paren{1 - L_{\norm{ \bayes^{\prime} }_{\infty}, \, \norm{ \bayes^{\prime\prime} }_{\infty} } D_{m,1}^{-1} } 
+ \alpha_2 D_{m,2}^{-2} \paren{1 - L_{\norm{ \bayes^{\prime} }_{\infty}, \, \norm{ \bayes^{\prime\prime} }_{\infty} } D_{m,2}^{-1} }
\enspace .
\end{align*}
If $\mM_n$ satisfies $\min\set{ D_{m,1}, D_{m,2}} \geq L_1 \paren{\norm{ \bayes^{\prime} }_{\infty}, \, \norm{ \bayes^{\prime\prime} }_{\infty} } \,$, then, the lower bound is larger than 
\[ 
\frac{\alpha_1}{2 D_{m,1}^2} + \frac{\alpha_2}{2 D_{m,2}^2}
\geq 
\frac{ \min\set{ \alpha_1, \alpha_2} } {2 \min\set{ D_{m,1}, D_{m,2}}^{2} } \enspace . 
\]
Now fix some $\mM_n$ such that $\min\set{ D_{m,1}, D_{m,2}} \leq \paren{\ln(n)}^6$.
Some $m^{\prime} \in \M_n$ exists such that $S_m \subset S_{m^{\prime}}$, $L_1
 \leq D_{m^{\prime},1} \leq 2 \max\set{ L_1, \paren{ \ln(n) }^6 } \,$; indeed, either $m=m^{\prime}$ satisfies the condition, or $m^{\prime}$ can be obtained from $m$ by doubling the number of bins in $[0,1/2]$ (resp. $(1/2,1]$) until the required condition is fulfilled.
Then, 
\[ 
\perte{\bayes_m} 
\geq \perte{\bayes_{m^{\prime}}}
\geq \frac{ \min\set{ \alpha_1, \alpha_2} } {2 \min\set{ D_{m,1}, D_{m,2}}^{2} } 
\geq \frac{ L_{ \norm{ \bayes^{\prime} }_{\infty}, \, \norm{ \bayes^{\prime\prime} }_{\infty}  } } {\paren{ \ln(n) }^{12}} \enspace . 
\]
\end{proof}
\begin{claim} \label{claim:gooddim}
Define for every $\mM_n \, $, 
\[ C_{m,1} \egaldef D_{m,1} \paren{ \frac{ \beta_1 } { 2 \alpha_1 n }}^{1/3} > 0 \qquad \mbox{and} \qquad  C_{m,2} \egaldef D_{m,2} \paren{ \frac{ \beta_2 } { 2 \alpha_2 n }}^{1/3} > 0 \enspace . \]
Let $\Delta \in (0,1]$ and define 
\[ \eta_{\Delta} \egaldef \frac{\Delta^2} {17 \paren{1 + \paren{\frac{\beta_1} {\beta_2}}^{2/3} }} 
 \enspace . \]
Then, on $\Omega$, any $\mM_n$ such that 
\begin{equation}
\label{eq.hyp.claim:gooddim}
\perte{\ERM_m} \leq (1 + \eta_{\Delta} ) \inf_{\mM_n} \set{ \perte{\ERM_m} } 
\end{equation}
must satisfy
\begin{equation} \label{eq.claim:gooddim}
\max\set{ \absj{C_{m,1} - 1}, \absj{C_{m,2} - 1} } \leq \Delta  
\end{equation}
as soon as $n \geq L_{\hypThm,\Delta}\,$.
\end{claim}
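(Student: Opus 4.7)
The plan is to translate the relative excess-loss condition \eqref{eq.hyp.claim:gooddim} into an explicit quantitative bound on $C_{m,1}$ and $C_{m,2}$ via the expansion of Claim~\ref{claim:control}. First I will use Claims~\ref{claim:upperbound} and~\ref{claim:smallmod} to argue that any $m$ satisfying \eqref{eq.hyp.claim:gooddim} must have $\min\set{D_{m,1},D_{m,2}} \geq (\ln n)^6$ once $n \geq L_{\hypThm,\Delta}$. Indeed $\eta_\Delta \leq 1/17$, so the hypothesis bounds $\perte{\ERM_m}$ by a constant multiple of $\inf_{\mM_n}\perte{\ERM_m} \leq L n^{-2/3}$ (Claim~\ref{claim:upperbound}), whereas any $m$ with $\min\set{D_{m,1},D_{m,2}} \leq (\ln n)^6$ has excess loss at least of order $(\ln n)^{-12}$ by Claim~\ref{claim:smallmod}. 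Hence Claim~\ref{claim:control} applies to $m$ with relative error $\LThm(\ln n)^{-1/2}$.

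Next, substituting $D_{m,i} = C_{m,i}(2\alpha_i n/\beta_i)^{1/3}$ into \eqref{eq.claim.control.loss} gives
\begin{equation*}
\perte{\ERM_m} = \frac{w_1\,g(C_{m,1}) + w_2\,g(C_{m,2})}{n^{2/3}}\,(1 + \kappa_{m,0}) \enspace ,
\end{equation*}
where $w_i \egaldef \alpha_i^{1/3}\beta_i^{2/3}$ and $g(C) \egaldef 2^{-2/3}C^{-2} + 2^{1/3}C$ attains its unique minimum $g(1) = 3\cdot 2^{-2/3}$ at $C = 1$. Combined with the matching upper bound of Claim~\ref{claim:upperbound}, and setting $\phi \egaldef g/g(1)$ and $u_i \egaldef w_i/(w_1+w_2)$, \eqref{eq.hyp.claim:gooddim} yields, after absorbing the $\LThm(\ln n)^{-1/2}$ factors (which requires $n \geq L_{\hypThm,\Delta}$), the inequality
\begin{equation*}
u_1 \bigl(\phi(C_{m,1}) - 1\bigr) + u_2 \bigl(\phi(C_{m,2}) - 1\bigr) \leq 3 \eta_\Delta \enspace .
\end{equation*}
Since $\phi \geq 1$, both summands are nonnegative, so $u_i\bigl(\phi(C_{m,i}) - 1\bigr) \leq 3 \eta_\Delta$ for each $i \in \set{1,2}$.

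The last ingredient is the explicit factorisation $\phi(C) - 1 = (C-1)^2(2C+1)/(3C^2)$. A short monotonicity check on the map $\Delta \mapsto (3+2\Delta)/[3(1+\Delta)^2]$ shows that for $\Delta \in (0,1]$ the infimum of $\phi(C) - 1$ over $\set{C > 0 \telque |C - 1| > \Delta}$ is attained at $C = 1+\Delta$ and is at least $5\Delta^2/12$. Hence $|C_{m,i} - 1| > \Delta$ would force $u_i \leq 36 \eta_\Delta/(5\Delta^2)$; plugging in the specific value $\eta_\Delta = \Delta^2/[17(1+(\beta_1/\beta_2)^{2/3})]$ together with the lower bound $u_i \geq (1 + w_{3-i}/w_i)^{-1}$ (which in the setting of Theorem~\ref{th.shape} is controlled purely by $(\beta_1/\beta_2)^{2/3}$), this is numerically incompatible with the inequality above for both $i = 1$ and $i = 2$. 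This contradiction gives $\max\set{|C_{m,1} - 1|,|C_{m,2} - 1|} \leq \Delta$ as required.

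The main obstacle is purely technical: keeping track of the cascade of $1 \pm \LThm(\ln n)^{-1/2}$ factors coming from Claims~\ref{claim:control} and~\ref{claim:upperbound}, so that a constant fraction of $\eta_\Delta$ remains available after absorbing them. This is what dictates the threshold $n \geq L_{\hypThm,\Delta}$. Beyond that, the proof is an explicit numerical comparison resting entirely on the three preceding claims and the elementary convex-analysis of the one-variable function $\phi$; no additional probabilistic input is required.
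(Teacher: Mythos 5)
Your proof is correct and follows the same skeleton as the paper's: Claims~\ref{claim:upperbound} and~\ref{claim:smallmod} to force $\min\set{D_{m,1},D_{m,2}} \geq \paren{\ln n}^6$, then the expansion \eqref{eq.claim.control.loss} of Claim~\ref{claim:control} rewritten in the variables $C_{m,i}$, and finally a per-coordinate comparison with $\eta_{\Delta}$ after absorbing the $\LThm \paren{\ln n}^{-1/2}$ factors. The only genuine divergence is the one-variable estimate: where the paper invokes Lemma~\ref{VFCV.le.tech.1} (a Taylor--Lagrange bound which, in your normalization, gives $\phi(C)-1 \geq \frac{1}{16}\min\set{(C-1)^2,1}$), you use the exact factorization $\phi(C)-1=(C-1)^2(2C+1)/(3C^2)$ together with the monotonicity of $(3+2\Delta)/\scroch{3(1+\Delta)^2}$, which yields the sharper bound $5\Delta^2/12$ outside the $\Delta$-neighborhood of $1$; this buys you slightly more numerical room in the final step (your contradiction for $i=1$ needs $w_1/w_2=(\beta_1/\beta_2)^{2/3}>36/85$, whereas the paper's direct bound on $\min\set{1,(C_{m,1}-1)^2}$ essentially needs $(\beta_1/\beta_2)^{2/3}>16/17$). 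Do note that this last numerical check for $i=1$ is where the identity $\alpha_1=\alpha_2$ (coming from $\bayes(x)=x$ in Theorem~\ref{th.shape}) and the labeling $\beta_1\geq\beta_2$ silently enter, so your assertion that the incompatibility holds ``for both $i=1$ and $i=2$'' is not true for arbitrary $\beta_1,\beta_2$; the paper's own proof has the same implicit dependence (the non-symmetric definition of $\eta_{\Delta}$ and the later reduction ``without loss of generality $\siga>\sigb$'' show the intended regime), so this is a shared imprecision of the claim's statement rather than a gap specific to your argument.
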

\begin{proof}[proof of Claim~\ref{claim:gooddim}]
Assume that $\Omega$ holds and let $\mM_n$ be satisfying \eqref{eq.hyp.claim:gooddim}. 
From Claim~\ref{claim:smallmod}, we know that $\min\set{ D_{m,1}, D_{m,2}} > \paren{\ln(n)}^6$ for $n \geq L_{\hypThm,\Delta}$.

Define, for every $x > -1$, $f(x) = 2^{-2/3} (1+x)^{-2} + 2^{1/3} (1+x) \,$ and for every $x \geq 0$, $g(x) = \min\set{1,(x-1)^2}$. 
Then, \eqref{eq.claim.control.loss} in Claim~\ref{claim:control} and Lemma~\ref{VFCV.le.tech.1} below yield
\begin{align*}
\perte{\ERM_m} &\geq \frac{1}{n^{2/3}} \paren{ \alpha_1^{1/3} \beta_1^{2/3} f(C_{m,1} - 1) + \alpha_2^{1/3} \beta_2^{2/3} f(C_{m,2} - 1) } \paren{ 1 - \LThm \paren{\ln n}^{-1/2}} \\
&\geq \frac{3 }{2^{2/3} n^{2/3}} \paren{ \alpha_1^{1/3} \beta_1^{2/3} +  \alpha_2^{1/3} \beta_2^{2/3} }  
+ \frac{3 }{2^{14/3} n^{2/3}} \paren{ 1 - \LThm \paren{\ln n}^{-1/2} }  
\paren{ \alpha_1^{1/3} \beta_1^{2/3} g(C_{m,1}) + \alpha_2^{1/3} \beta_2^{2/3}  g(C_{m,2}) } \enspace .
\end{align*}
Hence, \eqref{eq.claim:upperbound} and \eqref{eq.hyp.claim:gooddim} imply
\begin{align*} 
16 \paren{ \alpha_1^{1/3} \beta_1^{2/3} + \alpha_2^{1/3} \beta_2^{2/3} } \frac{ \eta_{\Delta} + L_{\hypThm,\Delta} \paren{\ln n}^{-1/2} } {  1 - \LThm  \paren{\ln n}^{-1/2} }
&\geq \paren{ \alpha_1^{1/3} \beta_1^{2/3}  g(C_{m,1}) + \alpha_2^{1/3} \beta_2^{2/3}  g(C_{m,2}) } \enspace . 
\end{align*}

In particular, when $n \geq L_{\hypThm,\Delta} \,$, 
\begin{align*} 
g(C_{m,1}) \leq 16 \Delta^2 /17 < 1  \qquad \mbox{and} \qquad g(C_{m,2}) \leq 16 \Delta^2 /17 < 1
\enspace , \end{align*}
which implies \eqref{eq.claim:gooddim}.
\end{proof}

\begin{lemma} \label{VFCV.le.tech.1}
Let $f: (-1, +\infty) \flens \R$ be defined by $f(x) = 2^{-2/3} (1+x)^{-2} + 2^{1/3} (1+x)$. Then, for every $x>-1$,
\[ f(x) \geq 3 \times 2^{-2/3} + 3 \times 2^{-14/3}\min\set{x^2 , 1} \enspace . \]  
\end{lemma}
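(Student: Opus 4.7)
The plan is to substitute $u = 1+x \in (0,+\infty)$ and study $g(u) := 2^{-2/3} u^{-2} + 2^{1/3} u$. A direct calculation shows $g'(u) = -2 \cdot 2^{-2/3} u^{-3} + 2^{1/3}$ vanishes exactly at $u = 1$, where $g(1) = 3 \cdot 2^{-2/3}$ is the global minimum. This already establishes the inequality at $x = 0$ and reduces the lemma to the two-sided quadratic/flat lower bound on $g(u) - 3 \cdot 2^{-2/3}$.

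I would split on whether $|x| = |u-1|$ is at most or at least $1$. In the range $u \in (0,2]$, the right-hand side is $3 \cdot 2^{-2/3} + 3 \cdot 2^{-14/3}(u-1)^2$, so I would introduce the auxiliary function
\[
h(u) \egaldef g(u) - 3 \cdot 2^{-2/3} - 3 \cdot 2^{-14/3}(u-1)^2
\]
and show $h(u) \geq 0$ by a convexity argument: we have $h(1) = 0$ and $h'(1) = g'(1) = 0$, while $h''(u) = 6 \cdot 2^{-2/3} u^{-4} - 6 \cdot 2^{-14/3}$. Using $u \leq 2$, hence $u^{-4} \geq 2^{-4}$, yields $h''(u) \geq 6 \cdot 2^{-2/3} \cdot 2^{-4} - 6 \cdot 2^{-14/3} = 0$ on the whole interval. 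Convexity of $h$ together with $h(1) = h'(1) = 0$ then forces $h \geq 0$ on $(0,2]$.

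For $u \geq 2$ the right-hand side becomes the constant $3 \cdot 2^{-2/3} + 3 \cdot 2^{-14/3}$, so the task reduces to a numerical check. I would simply use $g(u) \geq 2^{1/3} u \geq 2^{4/3}$, and verify $2^{4/3} \geq 3 \cdot 2^{-2/3} + 3 \cdot 2^{-14/3}$ by multiplying through by $2^{14/3}$, turning the inequality into $64 \geq 48 + 3 = 51$.

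The only delicate point is the constant $3 \cdot 2^{-14/3}$: it is chosen precisely so that $h''(2) = 0$, i.e.\ so that $h$ is \emph{just barely} convex up to the endpoint $u=2$ of Case~1; any larger constant would make the convexity argument fail near $u=2$. No real obstacle arises beyond tracking powers of $2$ carefully and making sure the case boundary $u = 2$ matches in both cases.
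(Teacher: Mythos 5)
Your proof is correct and takes essentially the same route as the paper: the paper applies the Taylor--Lagrange formula at order two around $x=0$ with the same lower bound $f''(t) \geq 6 \times 2^{-14/3}$ for $t \leq 1$, which is exactly your convexity argument for $h$ in the variable $u=1+x$, written differently. The only cosmetic difference is the region $x \geq 1$, where the paper invokes monotonicity of $f$ on $[0,+\infty)$ while you drop the $u^{-2}$ term and check $64 \geq 51$; both work.
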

\begin{proof}[proof of Lemma~\ref{VFCV.le.tech.1}]
We apply the Taylor-Lagrange theorem to $f$ (which is infinitely differentiable) at order two, between 0 and $x$. The result follows since $f(0)=3 \times 2^{-2/3}$, $f^{\prime}(0)=0$ and $f^{\prime\prime}(t) = 6 \times 2^{-2/3} \times  (1+t)^{-4} \geq 3 \times 2^{1/3 - 4} $ if $t \leq 1$.
If $t>1$, the result follows from the fact that $f^{\prime} \geq 0$ on $[0,+\infty)$.
\end{proof}

We now can conclude the proof of Theorem~\ref{th.shape}.
Let us assume that on $\Omega$, $\mM_n$ satisfies \eqref{eq.hyp.claim:gooddim} for some $\Delta>0$ to be chosen later. 
Without loss of generality, we can assume that $\siga>\sigb\,$, hence $\beta_1 > \beta_2\,$.
By Claim~\ref{claim:gooddim}, we have
\[ \paren{ \frac{ 2 \alpha_1 n } { \beta_1} }^{1/3} \paren{1 - \Delta} \leq D_{m,1} \leq \paren{ \frac{ 2 \alpha_1 n } { \beta_1} }^{1/3} \paren{1 + \Delta} \]
and
\[ \paren{ \frac{ 2 \alpha_2 n } { \beta_2} }^{1/3} \paren{1 - \Delta} \leq D_{m,2} \leq \paren{ \frac{ 2 \alpha_2 n } { \beta_2} }^{1/3} \paren{1 + \Delta} \enspace . \]
Therefore,
\[ D_{m,1} \leq \kappa D_{m,2} \qquad \mbox{with} \qquad \kappa = \frac{ \alpha_1^{1/3} \beta_2^{1/3} \paren{ 1+\Delta } } { \alpha_2^{1/3} \beta_1^{1/3} \paren{ 1-\Delta } } \enspace . \]
Since $\beta_1>\beta_2$, we can choose $\Delta=\Delta(\beta_1, \beta_2)>0$ such that 
\[ \kappa \leq \kappa^{\prime} = (\alpha_1 / \alpha_2)^{1/3} (\beta_2 / \beta_1)^{1/6}  < \overline{\kappa} = (\alpha_1 / \alpha_2)^{1/3} \enspace . \] 
Note that $D_{m,1} \leq \kappa D_{m,2}$ is equivalent to $D_{m,1} \leq \kappa D_m / (1 + \kappa)$. 
Therefore, some $m^{\prime} \in \M_n$ exists such that $D_m = D_{m^{\prime}}$ and 
$-1 \leq  D_{m^{\prime},1} - D_m \overline{\kappa} / (1+\overline{\kappa}) \leq 0$. 
Then, \eqref{eq.claim.control.emprisk} implies that 
\begin{align*} 
P_n \gamma\paren{ \ERM_m } - P_n \gamma\paren{ \ERM_{m^{\prime}} } 
&= 
\croch{ \frac{\alpha_1}{D_{m,1}^2} + \frac{\alpha_2}{D_{m,2}^2}  } \paren{1 + \kappa_{m,1}}
- \croch{ \frac{\alpha_1}{D_{m^{\prime},1}^2} + \frac{\alpha_2}{D_{m^{\prime},2}^2}  } \paren{1 + \kappa_{m^{\prime},1}}
\\&\qquad 
- \croch{ \frac{\beta_1  D_{m,1}} {n} + \frac{\beta_2 D_{m,2} } {n} } \paren{1 + \kappa_{m,2}} 
+ \croch{ \frac{\beta_1  D_{m^{\prime},1}} {n} + \frac{\beta_2 D_{m^{\prime},2} } {n} } \paren{1 + \kappa_{m^{\prime},2}} 
\\
&\geq 
\croch{ \alpha_1  D_{m,1}^{-2} + \alpha_2 D_{m,2}^{-2} - \alpha_1  D_{m^{\prime},1}^{-2} - \alpha_2 D_{m^{\prime},2}^{-2}} 
\\& \qquad
+ \frac{\beta_1 (D_{m^{\prime},1} - D_{m,1})} {n} + \frac{\beta_2 (D_{m^{\prime},2} - D_{m,2}) } {n} 
- \LThm \paren{\ln(n)}^{-1/2} n^{-2/3} \enspace .
\end{align*}
Now, remark that the bias term is smaller for $S_{m^{\prime}}$ than for $S_m$ since $x \flapp \alpha_1 x^{-2} + \alpha_2 (D_m-x)^{-2}$ is decreasing on $(0,D_m \overline{\kappa} / (1+\overline{\kappa})]$.
Therefore, using the definition of $m^{\prime}$,
\begin{align*} 
P_n \gamma\paren{ \ERM_m } - P_n \gamma\paren{ \ERM_{m^{\prime}} } 
&\geq \frac{\beta_1 (D_{m^{\prime},1} - D_{m,1})} {n} + \frac{\beta_2 (D_{m^{\prime},2} - D_{m,2}) } {n} 
- \LThm \paren{\ln(n)}^{-1/2} n^{-2/3}
\\
&= \frac{(\beta_1 - \beta_2) (D_{m^{\prime},1} - D_{m,1})} {n} - \LThm \paren{\ln(n)}^{-1/2} n^{-2/3}
\\
&\geq \frac{(\beta_1 - \beta_2) \croch{ D_m \paren{ \frac{1}{1+\kappa} - \frac{1}{1+\overline{\kappa}}}-1 }} {n} - \LThm \paren{\ln(n)}^{-1/2} n^{-2/3} 
\\
&\geq \LThm n^{-2/3} - \LThm \paren{\ln(n)}^{-1/2} n^{-2/3}  > 0
\end{align*}
as soon as $n \geq \LThm$.
Therefore, $m \notin \Mdim$, which concludes the proof of Theorem~\ref{th.shape}, with $\KThmDimFAILoracle=1+\eta_{\Delta}\,$. \qed

\subsection{Proof of Proposition~\ref{pro.oracle.lin.Klarge}} \label{sec.proof.pro.oracle.lin.Klarge}
Let $\LProA = L_{\cM, \aM, c_{\mathrm{rich}}, A, \sigmin, \cbiasmaj, \cbiasmin, \betamin, \betamaj, \crXl}$ denote a constant (varying from line to line) that may only depend on the constants appearing in the assumptions of Proposition~\ref{pro.oracle.lin.Klarge} (except the constant $K$).

According to \eqref{eq.Ep2} in Proposition~\ref{VFCV.pro.EcritVFCV-EcritID}, 
\[ \E \croch{ p_2(m) } = \frac{1}{n} \sum_{\lamm} \sigl^2 \geq \frac{D_m \sigmin^2}{n} \quad 
\mbox{and} \quad \E \croch{ p_2(m) } \leq \frac{D_m \norm{\sigma}_{\infty}^2}{n} + \frac{1}{n} \sum_{\lamm} \carre{\sigld} \enspace . \]
Now, using \hypArXl\ and \hypAp, 
\[ \cbiasmaj D_m^{-\betamaj} \geq \perte{\bayes_m} = \sum_{\lamm} \pl \carre{\sigld} \geq \frac{\crXl}{D_m} \sum_{\lamm} \carre{\sigld} \]
so that 
\begin{equation} \notag 
\E\croch{p_2(m)} \leq \frac{D_m}{n} \paren{ \norm{\sigma}_{\infty}^2 + \frac{\cbiasmaj} {\crXl D_m^{\betamaj}} } \enspace . 
\end{equation}
Therefore, for every $\mM_n$ such that $D_m \geq \ln(n) \,$, 
\[ c_1(K,n) \E\croch{p_2(m)} \leq \pen(m) \leq c_2 \E\croch{p_2(m)} \]
with $c_2 = K / \sigmin^2$ and
\[ 
c_1(K,n) = \frac{K}{\norm{\sigma}_{\infty}^2 + \frac{\cbiasmaj} {\crXl \paren{\ln(n)}^{\betamaj}}} 
\geq \frac{K}{\norm{\sigma}_{\infty}^2} \times \frac{1}{1 + \frac{\cbiasmaj} {\crXl \paren{\ln(n)}^{\betamaj} \norm{\sigma}_{\infty}^2 }}
\geq \frac{K}{\norm{\sigma}_{\infty}^2} \paren{1 - \paren{\ln(n)}^{-\betamaj/2}}
\geq \frac{1}{2} \paren{ 1 + \frac{K}{\norm{\sigma}_{\infty}^2} }
 \]
as soon as $n \geq L_{\hypProA,K} \, $.
Then, Theorem~5 in \cite{Arl_Mas:2009:pente} shows that with probability at least $1 - \LProA n^{-2}$, 
\[ \perte{\ERM_{\mh}} \leq 
\paren{ \frac{1 + (\frac{K}{\sigmin^2}  - 2)_+}{
\min\set{1, \frac{1}{2} \paren{ \frac{K}{\norm{\sigma}_{\infty}^2} - 1 }}}}
\inf_{\mM_n} \set{\perte{\ERM_m}}  \]
which concludes the proof. 
When $K \geq 2 \norm{\sigma}_{\infty}^2$, the leading constant of the oracle inequality is smaller than 
\[ \paren{ \frac{1 + (\frac{K}{\sigmin^2}  - 2)_+}{1 - \paren{\ln(n)}^{-\betamaj/2}} + \paren{\ln(n)}^{-1/5} } \]
which can be made as close as possible from $K \sigmin^{-2}  - 1$ provided $n \geq L_{\hypProA,K} \, $.
\qed

\subsection{Proof of Proposition~\ref{pro.overfit.Mal}} \label{sec.proof.pro.overfit.Mal}

Let $\LProB = L_{A,\sigmin,\alpha,R,\mu,c_{X,\Leb},\inf_{[0,1]}{\sigma}}$ denote a constant (varying from line to line) that may only depend on the constants appearing in the assumptions of Proposition~\ref{pro.overfit.Mal} (except the constant $K$).

Since $\M_n = \Mdeuxpas_n$ and the penalty can be written $\pen(m) = \frac{K D_{m,1}}{n} + \frac{K D_{m,2}}{n}$, the model selection problem can actually be split into two separate model selection problems: one for the $N$ data points for which $X_i \in [0,1/2]$, the other for the $n-N$ data points for which $X_i \in (1/2,1] \,$.

For proving Proposition~\ref{pro.overfit.Mal}, we can focus on the first problem only, that is, we are given $N$ data points independent with distribution $\loi\paren{(X,Y) \sachant X \in [0,1/2]}$, where $N$ is itself a random variable whose distribution is binomial with parameters $n$ and $\P(X \in [0,1/2]) = \mu \,$. The goal is to select a model $\mt$ among the family $\Mt(n,N)$ of regular histograms on $[0,1/2]$ with a number of bins between 1 and $n/(2 \ln(n))^2 \,$.
Note that from Bernstein's inequality (see for instance Proposition~2.9 in \cite{Mas:2003:St-Flour}), we have with probability at least $1 - 2 n^{-2}$ that 
\[ n \mu + 2 \sqrt{\mu n \ln(n)} + \frac{2 \ln(n)}{3} \geq N \geq n \mu - 2 \sqrt{\mu n \ln(n)} - \frac{2 \ln(n)}{3} \enspace . \]
In particular, on some event $\Omega_n$ of probability at least $ 1 - 2 n^{-2}\,$, 
\[ \mbox{if } n \geq L_{\mu} \, , \qquad  \mbox{then } \quad \absj{ \frac{ N } {n \mu} - 1 } \leq  n^{-1/4}  \enspace . \]

Now, on $\Omega_n \, $, we apply Theorem~2 in \cite{Arl_Mas:2009:pente}.
First, let us check that the assumptions of Theorem~2 in \cite{Arl_Mas:2009:pente} are satisfied: \hypAb\ and \hypAn\ are assumed in Proposition~\ref{pro.overfit.Mal}; the upper bound on the bias of the models holds because $\bayes \in \mathcal{H}(\alpha,R) \, $; the uniform lower bound on $\P(X \in \Il)$ holds because $\P(X\in[0,1/2])=\mu>0$ and $X$ has a lower bounded density w.r.t. $\Leb([0,1/2])\,$.
Finally, we need an upper bound on $\pen(\mt) = K D_{\mt}/N \, $: Using the proof of Proposition~\ref{pro.oracle.lin.Klarge}, we have 
\[ \forall \mt \in \Mt(n,N) \, , \quad \frac{\pen(\mt)}{\E\croch{p_2(\mt,N)}} \leq \frac{K D_{\mt} N } {N D_{\mt} \inf_{t \in [0,1/2]} \set{ \sigma(t)^2 } } = \frac{ K } { \inf_{t \in [0,1/2]} \set{ \sigma(t)^2 } } < 1 \enspace . \]
So, Theorem~2 in \cite{Arl_Mas:2009:pente} shows that $D_{\mh,1} \geq L_{\hypProB,K} N \paren{\ln(N)}^{-2} \geq L_{\hypProB,K} n \paren{\ln(n)}^{-1}$ with probability at least $1 - L_{\hypProB,K} N^{-2}  = 1 - L_{\hypProB,K} n^{-2} \,$. 
The lower bound \eqref{eq.pro.overfit.Mal.risk} on the risk also follows from Theorem~2 in \cite{Arl_Mas:2009:pente} and its proof.
\qed

\section*{Acknowledgments}
The author would like to thank gratefully Pascal Massart for several fruitful discussions, and Francis Bach for pointing out an idea that led to improve Theorem~\ref{th.shape}.
The author acknowledges the support of the French Agence Nationale de la Recherche (ANR) under reference ANR-09-JCJC-0027-01.

\appendix 

\section{Appendix} \label{sec.app}

\begin{table} 
\caption{Accuracy indices $\Cor$ for each procedure in the four experiments $\pm\epsCor\,$. In each column, the most accurate procedures (taking the uncertainty $\epsCor$ into account) are bolded.
\label{LL.tab.un}}
\begin{center}
\begin{tabular}
{p{0.16\textwidth}@{\hspace{0.025\textwidth}}p{0.18\textwidth}@{\hspace{0.025\textwidth}}p{0.17\textwidth}@{\hspace{0.025\textwidth}}p{0.17\textwidth}@{\hspace{0.025\textwidth}}p{0.17\textwidth}}
\hline\noalign{\smallskip}
Experiment      & X1--005      & S0--1         & XS1--05      & X1--005$\mu$02     \\
\noalign{\smallskip}
\hline
\noalign{\smallskip}
Mal                       & $8.961 \pm 0.055$ & $6.056 \pm 0.033$ & $3.424 \pm 0.020$ & $4.693 \pm 0.019$ \\
Mal$\times 1.25$          & $7.279 \pm 0.057$ & $5.086 \pm 0.034$ & $2.347 \pm 0.015$ & $4.692 \pm 0.019$ \\
Mal$\times 2   $          & $4.101 \pm 0.036$ & $3.242 \pm 0.022$ & $1.452 \pm 0.005$ & $4.562 \pm 0.021$ \\
Mal$\times 3   $          & $3.074 \pm 0.019$ & $2.675 \pm 0.013$ & $1.338 \pm 0.003$ & $4.033 \pm 0.023$ \\
Mal$\times 4   $          & $2.862 \pm 0.015$ & $3.214 \pm 0.017$ & $1.891 \pm 0.014$ & $3.420 \pm 0.022$ \\
\noalign{\smallskip}
\hline
\noalign{\smallskip}
Mal$_{\infty}$            & $4.110 \pm 0.036$ & $3.242 \pm 0.022$ & $1.664 \pm 0.008$ & $3.122 \pm 0.020$ \\
Mal$_{\infty}\times 1.25$ & $3.371 \pm 0.024$ & $2.794 \pm 0.015$ & $1.452 \pm 0.005$ & $3.334 \pm 0.015$ \\
Mal$_{\infty}\times 2   $ & $2.862 \pm 0.015$ & $3.214 \pm 0.017$ & $1.358 \pm 0.004$ & $3.370 \pm 0.009$ \\
Mal$_{\infty}\times 3   $ & $3.033 \pm 0.019$ & $5.035 \pm 0.015$ & $3.493 \pm 0.019$ & $3.430 \pm 0.007$ \\
Mal$_{\infty}\times 4   $ & $3.549 \pm 0.025$ & $5.810 \pm 0.006$ & $5.020 \pm 0.009$ & $3.493 \pm 0.006$ \\
\noalign{\smallskip}
\hline
\noalign{\smallskip}
HO                        & $3.598 \pm 0.036$ & $2.707 \pm 0.021$ & $1.848 \pm 0.008$ & $2.398 \pm 0.018$ \\
2FCV                      & $3.104 \pm 0.032$ & $2.458 \pm 0.019$ & $1.767 \pm 0.007$ & $2.289 \pm 0.016$ \\
5FCV                      & $3.176 \pm 0.035$ & $2.538 \pm 0.021$ & $1.749 \pm 0.008$ & $2.332 \pm 0.018$ \\
10FCV                     & $3.291 \pm 0.037$ & $2.559 \pm 0.022$ & $1.738 \pm 0.008$ & $2.369 \pm 0.018$ \\
\noalign{\smallskip}
\hline
\noalign{\smallskip}
penHO                     & $5.070 \pm 0.045$ & $3.492 \pm 0.027$ & $2.529 \pm 0.014$ & $2.798 \pm 0.020$ \\
penHO$\times 1.25$        & $4.393 \pm 0.041$ & $3.072 \pm 0.024$ & $2.152 \pm 0.012$ & $2.626 \pm 0.019$ \\
penHO$\times 2   $        & $3.595 \pm 0.034$ & $2.659 \pm 0.020$ & $1.853 \pm 0.008$ & $2.751 \pm 0.034$ \\
penHO$\times 3   $        & $3.516 \pm 0.032$ & $2.558 \pm 0.018$ & $1.972 \pm 0.009$ & $3.634 \pm 0.055$ \\
penHO$\times 4   $        & $3.729 \pm 0.033$ & $2.573 \pm 0.018$ & $2.166 \pm 0.011$ & $4.663 \pm 0.070$ \\
\noalign{\smallskip}
\hline
\noalign{\smallskip}
pen2F                     & $4.530 \pm 0.043$ & $3.229 \pm 0.025$ & $2.325 \pm 0.013$ & $2.729 \pm 0.019$ \\
pen2F$\times 1.25$        & $3.649 \pm 0.037$ & $2.769 \pm 0.022$ & $1.945 \pm 0.010$ & $2.451 \pm 0.018$ \\
pen2F$\times 2   $        & $2.619 \pm 0.028$ & $2.270 \pm 0.017$ & $1.619 \pm 0.005$ & $2.062 \pm 0.014$ \\
pen2F$\times 3   $        & $2.273 \pm 0.023$ & $2.222 \pm 0.015$ & $1.539 \pm 0.005$ & $1.932 \pm 0.013$ \\
pen2F$\times 4   $        & $2.275 \pm 0.022$ & $2.381 \pm 0.016$ & $1.586 \pm 0.007$ & $1.907 \pm 0.014$ \\
\noalign{\smallskip}
\hline
\noalign{\smallskip}
pen5F                     & $3.779 \pm 0.041$        & $2.857 \pm 0.024$        & $1.925 \pm 0.010$        & $2.540 \pm 0.019$ \\
pen5F$\times 1.25$        & $2.794 \pm 0.031$        & $2.331 \pm 0.018$        & $1.646 \pm 0.007$        & $2.193 \pm 0.016$ \\
pen5F$\times 2   $        & $2.051 \pm 0.019$        & $1.995 \pm 0.012$        & $1.457 \pm 0.004$        & $1.880 \pm 0.011$ \\
pen5F$\times 3   $        & $1.777 \pm 0.013$        & $2.119 \pm 0.011$        & $1.388 \pm 0.003$        & $\meil{1.860 \pm 0.009}$ \\
pen5F$\times 4   $        & $1.838 \pm 0.015$        & $2.384 \pm 0.013$        & $1.366 \pm 0.003$        & $1.887 \pm 0.008$ \\
\noalign{\smallskip}
\hline
\noalign{\smallskip}
pen10F                    & $3.599 \pm 0.040$        & $2.726 \pm 0.024$        & $1.810 \pm 0.009$        & $2.451 \pm 0.019$ \\
pen10F$\times 1.25$       & $2.726 \pm 0.031$        & $2.215 \pm 0.018$        & $1.594 \pm 0.006$        & $2.125 \pm 0.016$ \\
pen10F$\times 2   $       & $1.893 \pm 0.016$        & $\meil{1.944 \pm 0.012}$ & $1.451 \pm 0.004$        & $\meil{1.854 \pm 0.010}$ \\
pen10F$\times 3   $       & $1.709 \pm 0.011$        & $2.132 \pm 0.011$        & $1.358 \pm 0.003$        & $1.879 \pm 0.008$ \\
pen10F$\times 4   $       & $1.706 \pm 0.011$        & $2.389 \pm 0.011$        & $1.327 \pm 0.002$        & $1.943 \pm 0.007$ \\
\noalign{\smallskip}
\hline
\noalign{\smallskip}
penLoo                    & $3.171 \pm 0.034$        & $2.499 \pm 0.021$        & $1.731 \pm 0.008$        & $2.395 \pm 0.019$ \\
penLoo$\times 1.25$       & $2.529 \pm 0.027$        & $2.118 \pm 0.016$        & $1.548 \pm 0.006$        & $2.065 \pm 0.015$ \\
penLoo$\times 2   $       & $1.870 \pm 0.014$        & $\meil{1.954 \pm 0.012}$ & $1.401 \pm 0.003$        & $1.879 \pm 0.009$ \\
penLoo$\times 3   $       & $1.701 \pm 0.010$        & $2.183 \pm 0.011$        & $1.378 \pm 0.003$        & $1.931 \pm 0.007$ \\
penLoo$\times 4   $       & $\meil{1.679 \pm 0.010}$ & $2.457 \pm 0.011$        & $\meil{1.308 \pm 0.002}$ & $2.002 \pm 0.006$ \\
\noalign{\smallskip}
\hline
\noalign{\smallskip}
Epenid                    & $2.805 \pm 0.029$        & $2.291 \pm 0.019$        & $1.702 \pm 0.008$        & $2.333 \pm 0.019$ \\
Epenid$\times 1.25$       & $2.304 \pm 0.023$        & $1.943 \pm 0.014$        & $1.513 \pm 0.005$        & $2.035 \pm 0.015$ \\
Epenid$\times 2   $       & $1.780 \pm 0.012$        & $1.897 \pm 0.011$        & $1.371 \pm 0.003$        & $1.868 \pm 0.009$ \\
Epenid$\times 3   $       & $1.687 \pm 0.009$        & $2.161 \pm 0.011$        & $1.312 \pm 0.002$        & $1.938 \pm 0.007$ \\
Epenid$\times 4   $       & $1.646 \pm 0.009$        & $2.448 \pm 0.010$        & $1.299 \pm 0.002$        & $2.005 \pm 0.006$ \\
\hline
\end{tabular} 
\end{center}
\end{table}

\begin{figure}
\begin{center}
\includegraphics[width=0.85\textwidth]{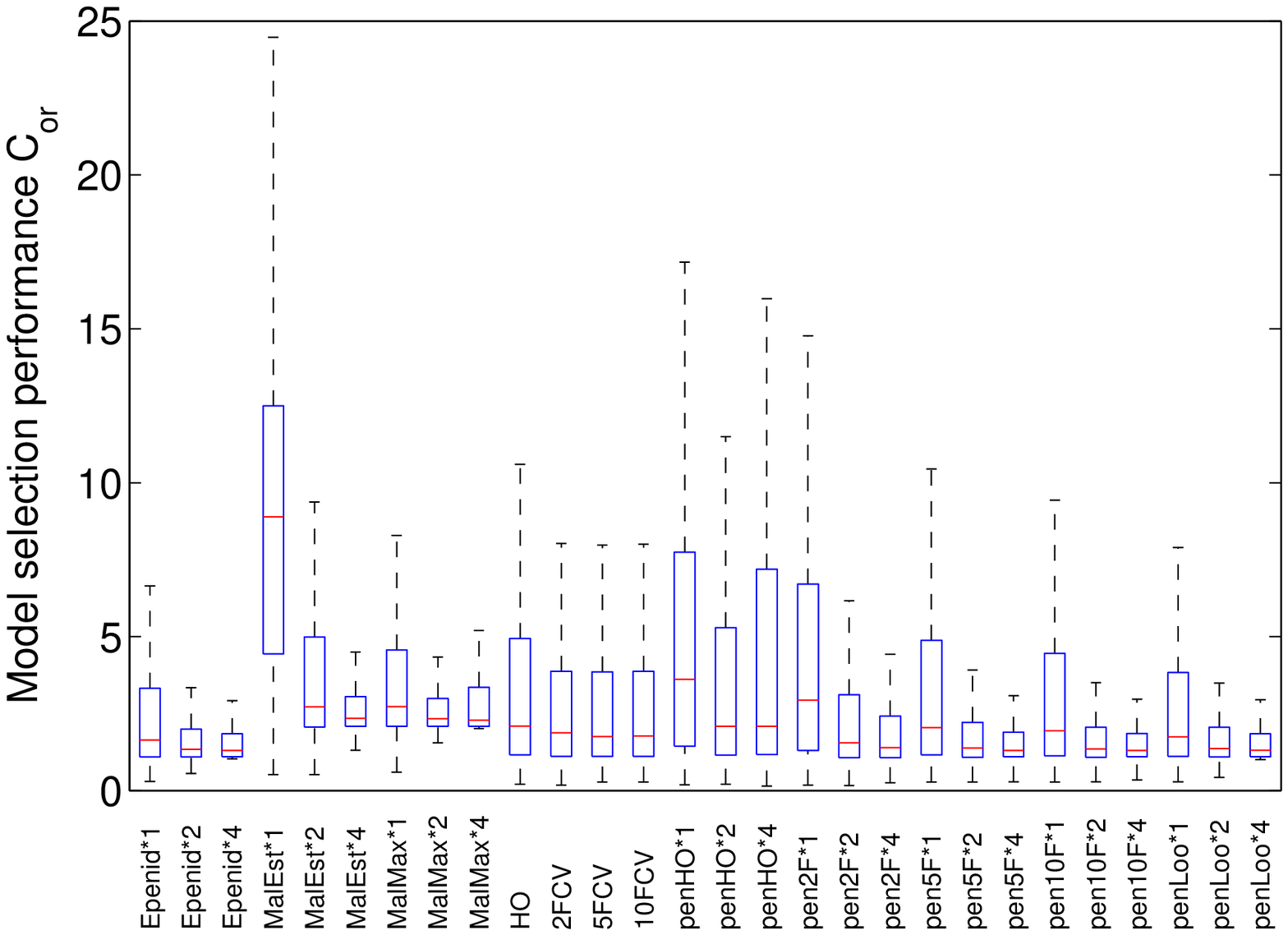}
 \hfill \\ Experiment X1--005 \vspace{0.2cm} \\
\includegraphics[width=0.85\textwidth]{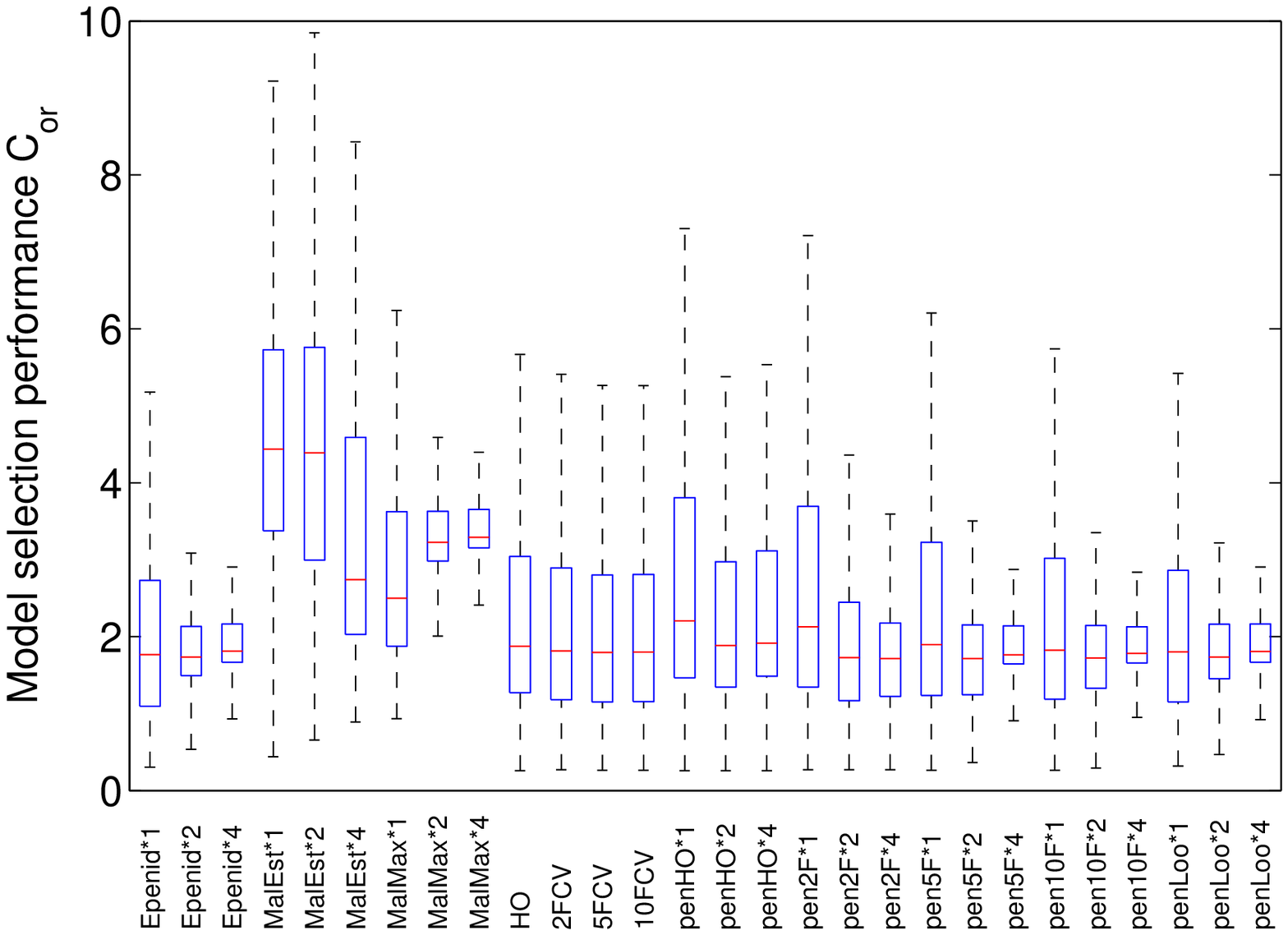}
 \hfill \\
Experiment X1--005$\mu$02 
\end{center}
\caption{Box plot of $\perte{\ERM_{\mh}}$ divided by the estimated value of  $\E\croch{\perte{\ERM_{\mo}}}$ for various algorithms in experiments X1--005 and X1--005$\mu$02. \label{fig.res.boxB-div1}}
\end{figure}
\begin{figure}
\begin{center}
\includegraphics[width=0.85\textwidth]{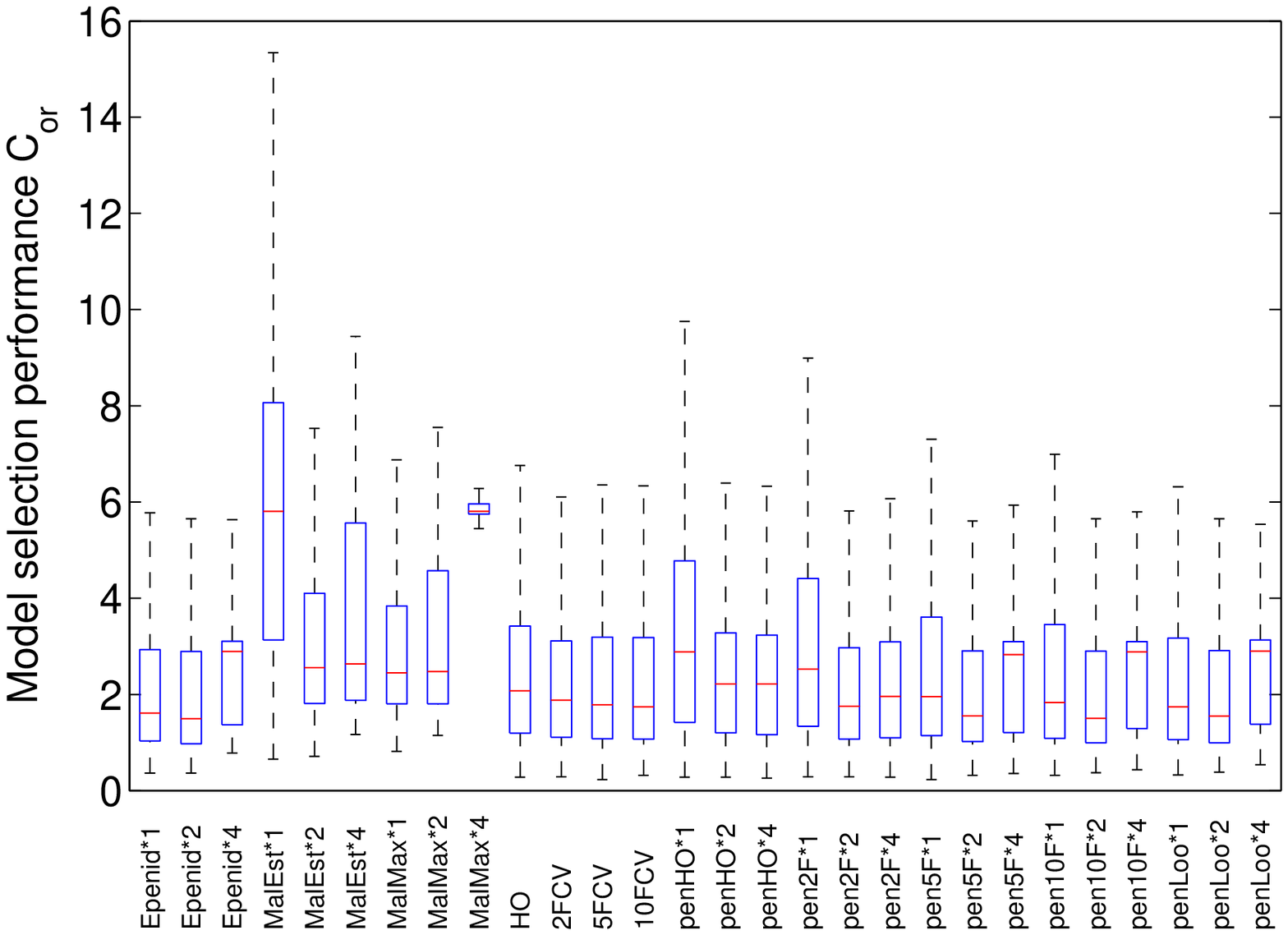}
\hfill \\ Experiment S0--1 \vspace{0.2cm} \\
\includegraphics[width=0.85\textwidth]{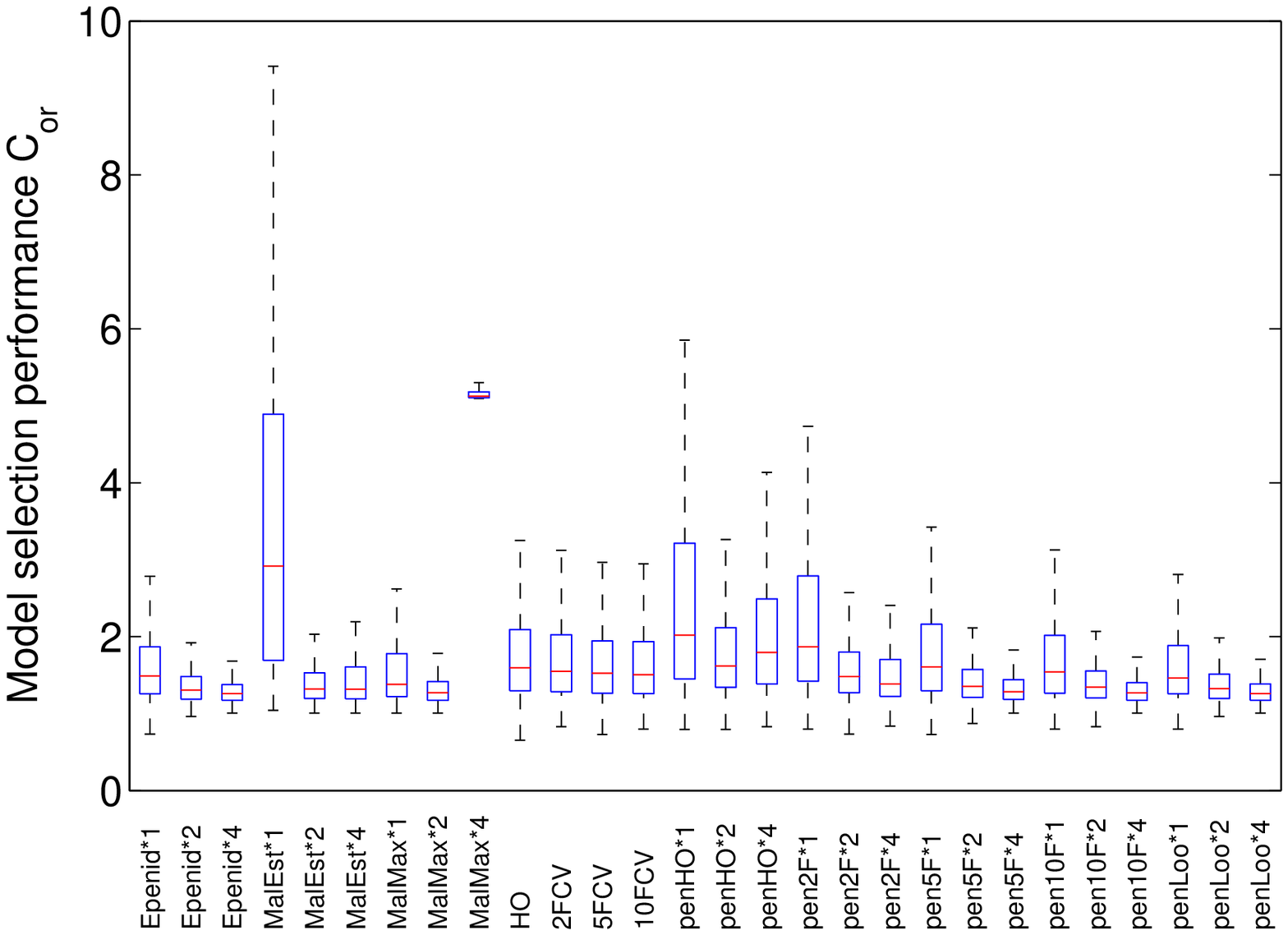}
\hfill \\
Experiment XS1--05
\end{center}
\caption{Box plot of $\perte{\ERM_{\mh}}$ divided by the estimated value of  $\E\croch{\perte{\ERM_{\mo}}}$ for various algorithms in experiments S0--1 and XS1--05. \label{fig.res.boxB-div2} }
\end{figure}

\begin{figure}
\begin{center}
\includegraphics[width=0.8\textwidth]{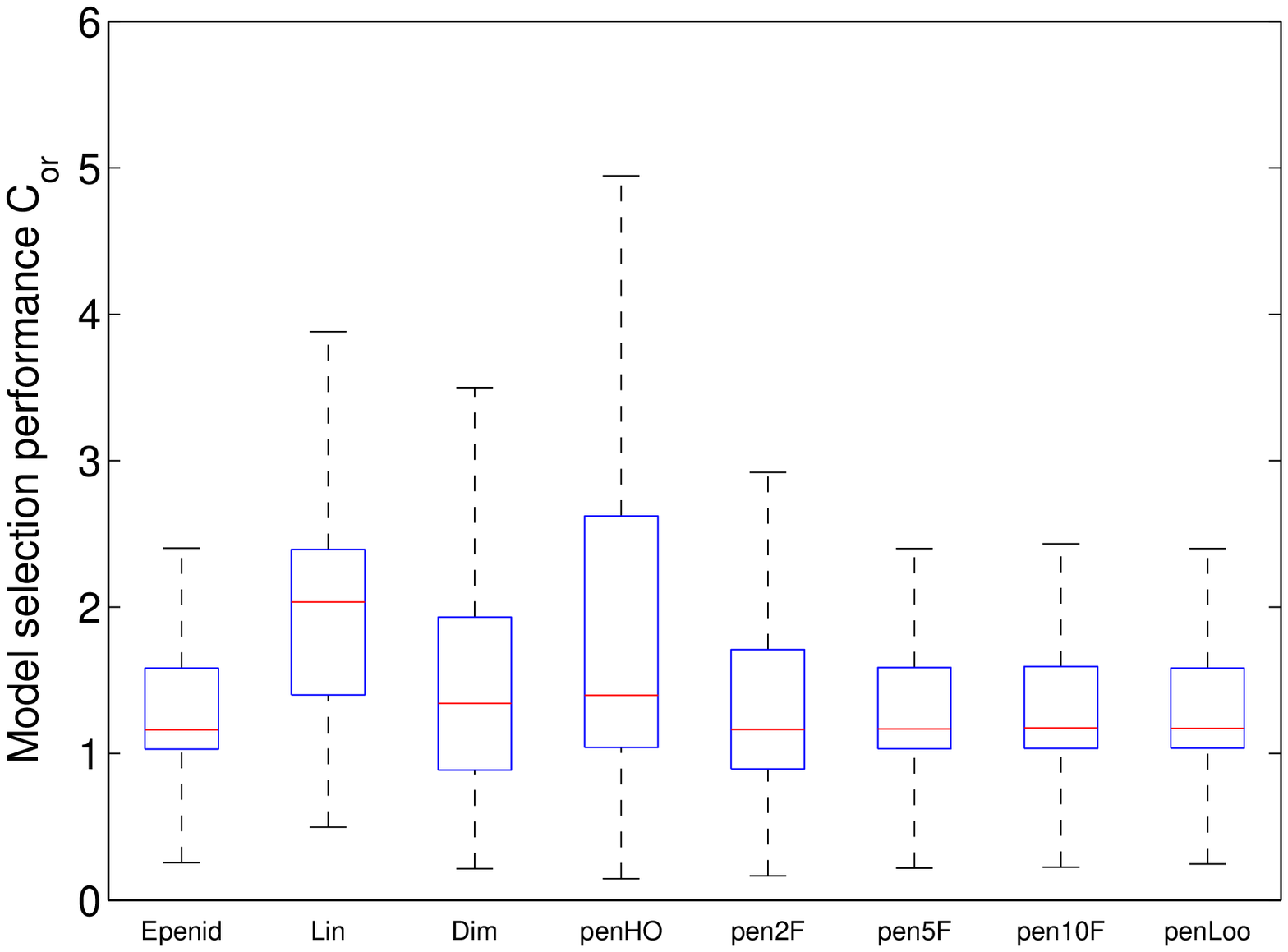}
 \hfill \\ Experiment X1--005 \vspace{0.2cm} \\
\includegraphics[width=0.8\textwidth]{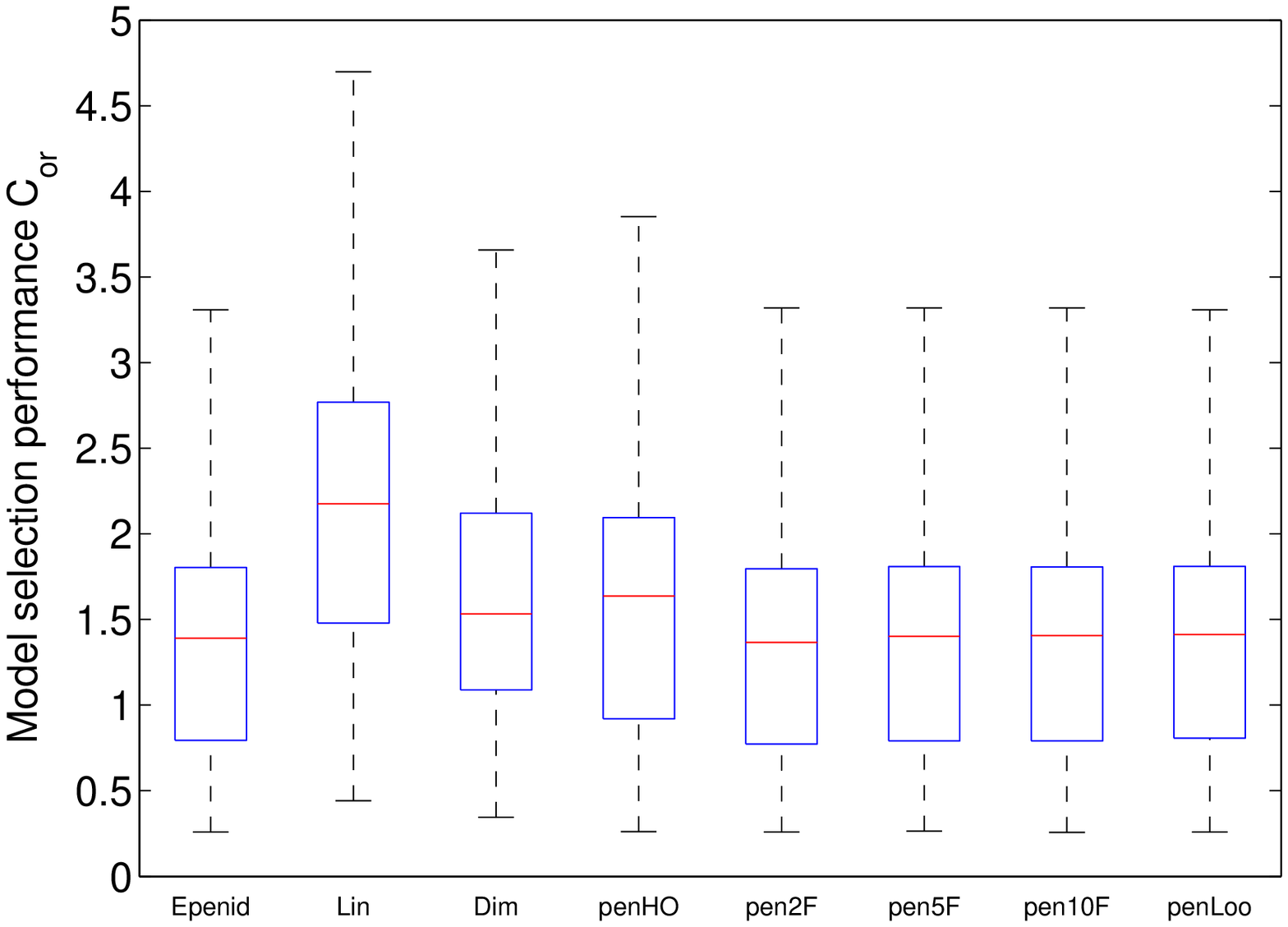}
 \hfill \\
Experiment X1--005$\mu$02 
\end{center}
\caption{Box plot of $\perte{\ERM_{\mh}}$ divided by the estimated value of  $\E\croch{\perte{\ERM_{\mo}}}$ for algorithms Id$\star$ (that is, penalties with the optimal data-driven overpenalization factor) in experiments X1--005 and X1--005$\mu$02. \label{fig.res.boxB-Id1}}
\end{figure}
\begin{figure}
\begin{center}
\includegraphics[width=0.8\textwidth]{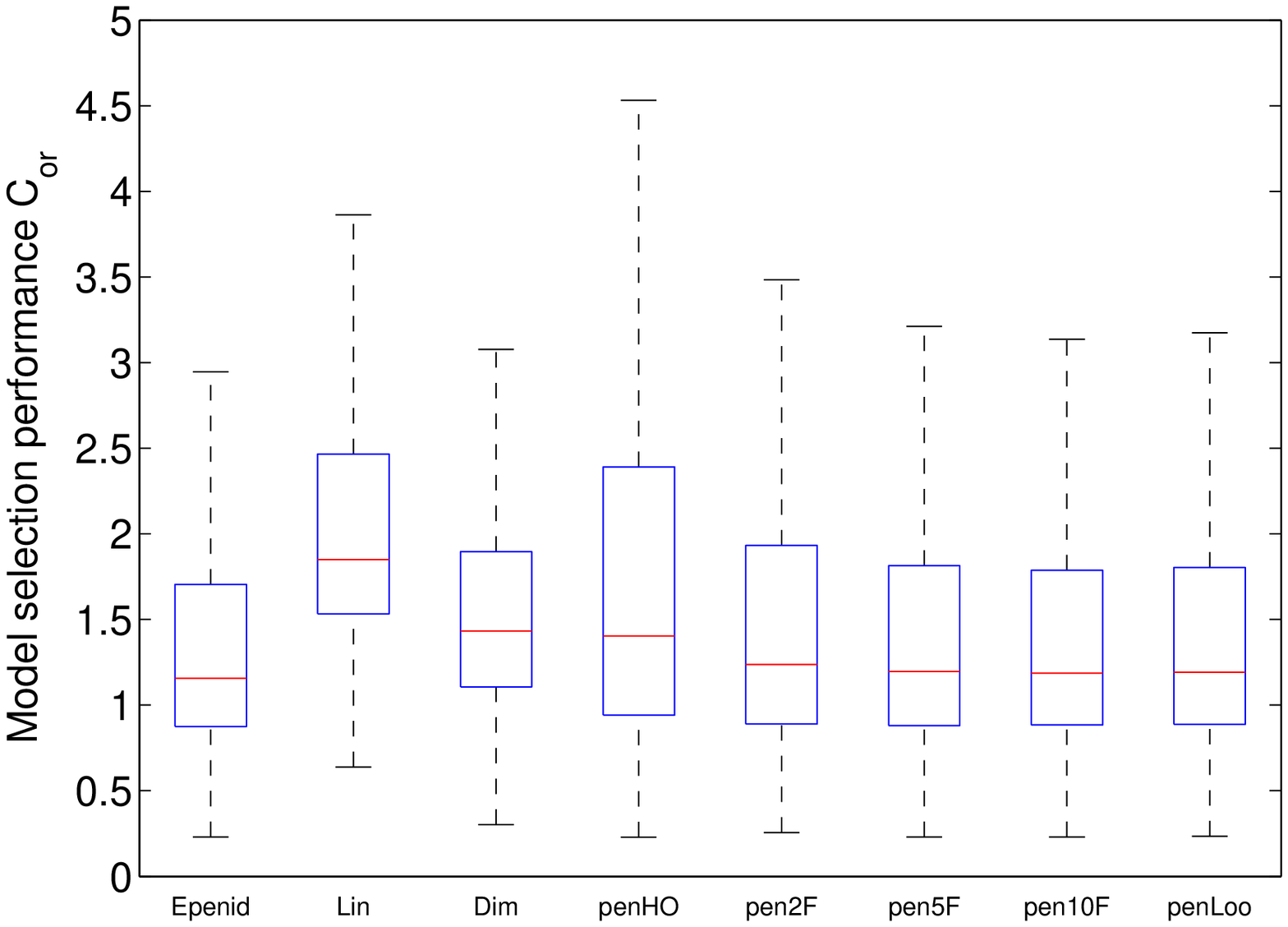}
\hfill \\ Experiment S0--1 \vspace{0.2cm} \\
\includegraphics[width=0.8\textwidth]{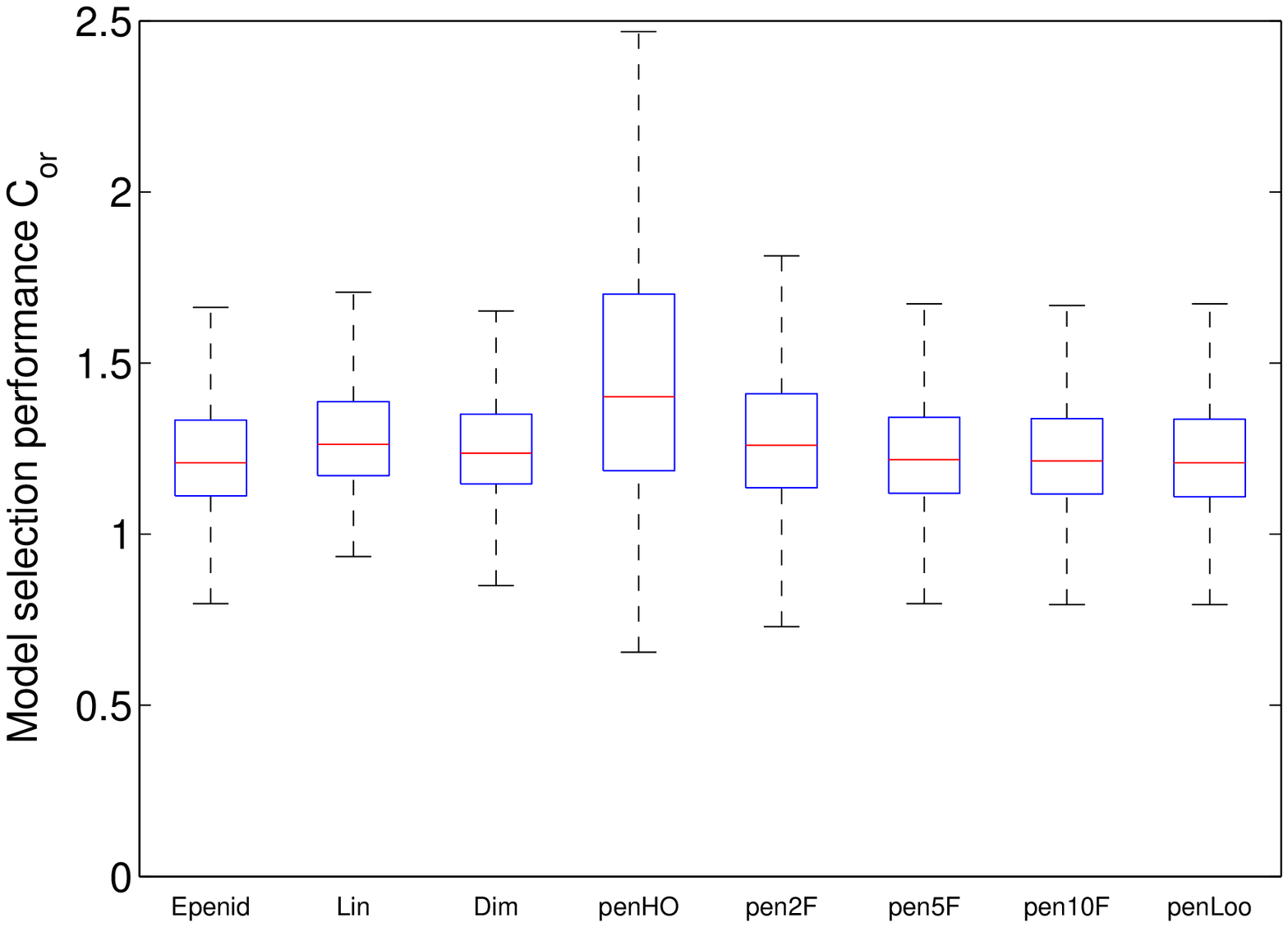}
\hfill \\
Experiment XS1--05
\end{center}
\caption{Box plot of $\perte{\ERM_{\mh}}$ divided by the estimated value of  $\E\croch{\perte{\ERM_{\mo}}}$ for algorithms Id$\star$ (that is, penalties with the optimal data-driven overpenalization factor) in experiments S0--1 and XS1--05. \label{fig.res.boxB-Id2} }
\end{figure}

\begin{figure}
\begin{center}
\begin{minipage}[b]{\linewidth}
\includegraphics[width=\textwidth]{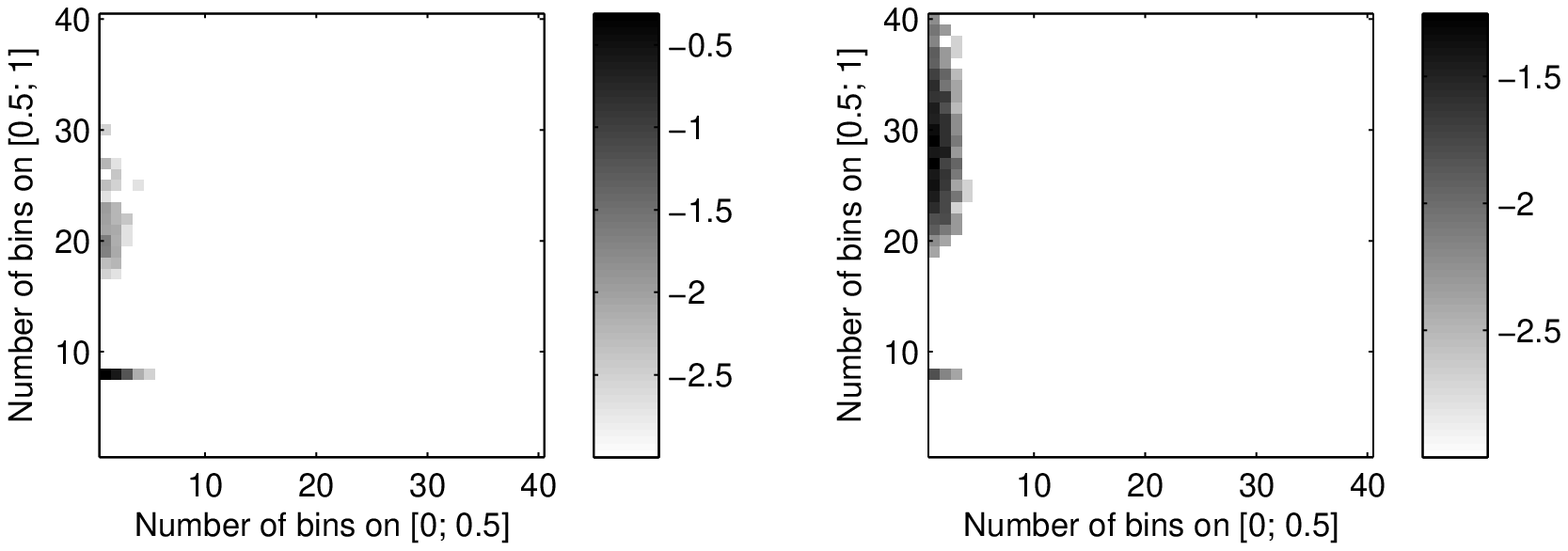}
\end{minipage} \hfill \\
\hspace{-1cm} $\middim$ \hspace{3cm} Experiment XS1--05 \hspace{2cm} $\mo$\vspace{0.2cm} \\
\begin{minipage}[b]{\linewidth}
\includegraphics[width=\textwidth]{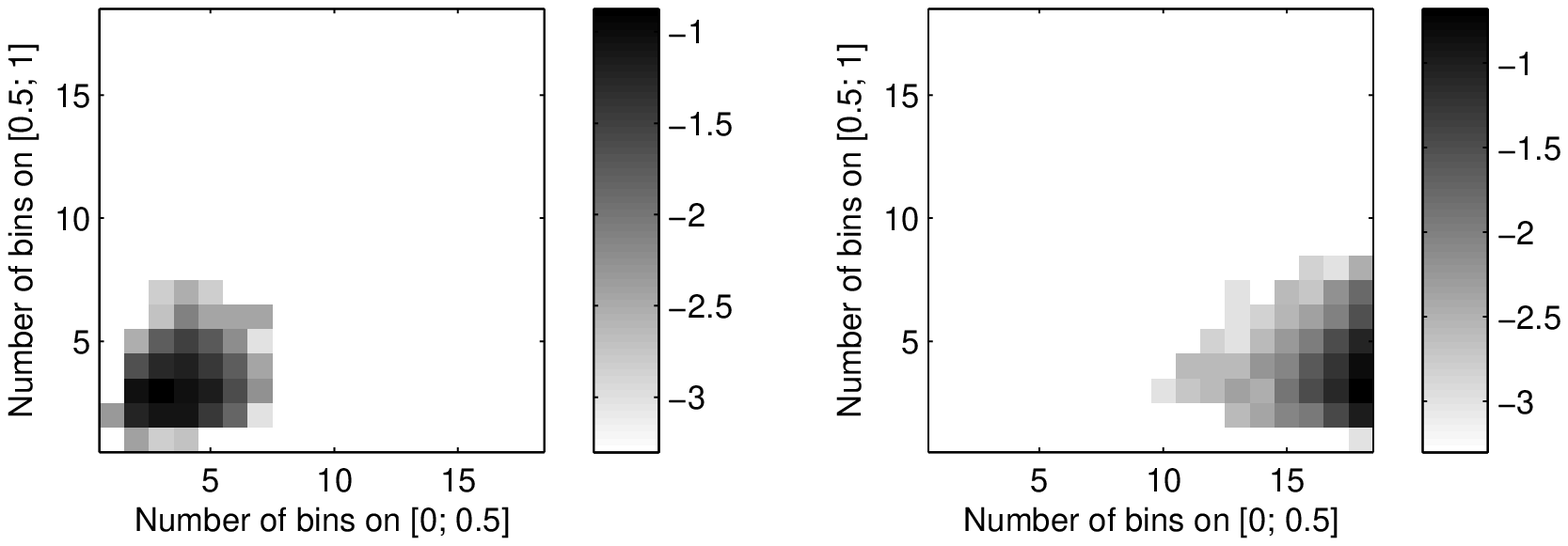}
\end{minipage} \hfill \\
\hspace{-1cm} $\middim$ \hspace{3cm} Experiment S0--1 \hspace{2cm} $\mo$\vspace{0.2cm} \\
\begin{minipage}[b]{\linewidth}
\includegraphics[width=\textwidth]{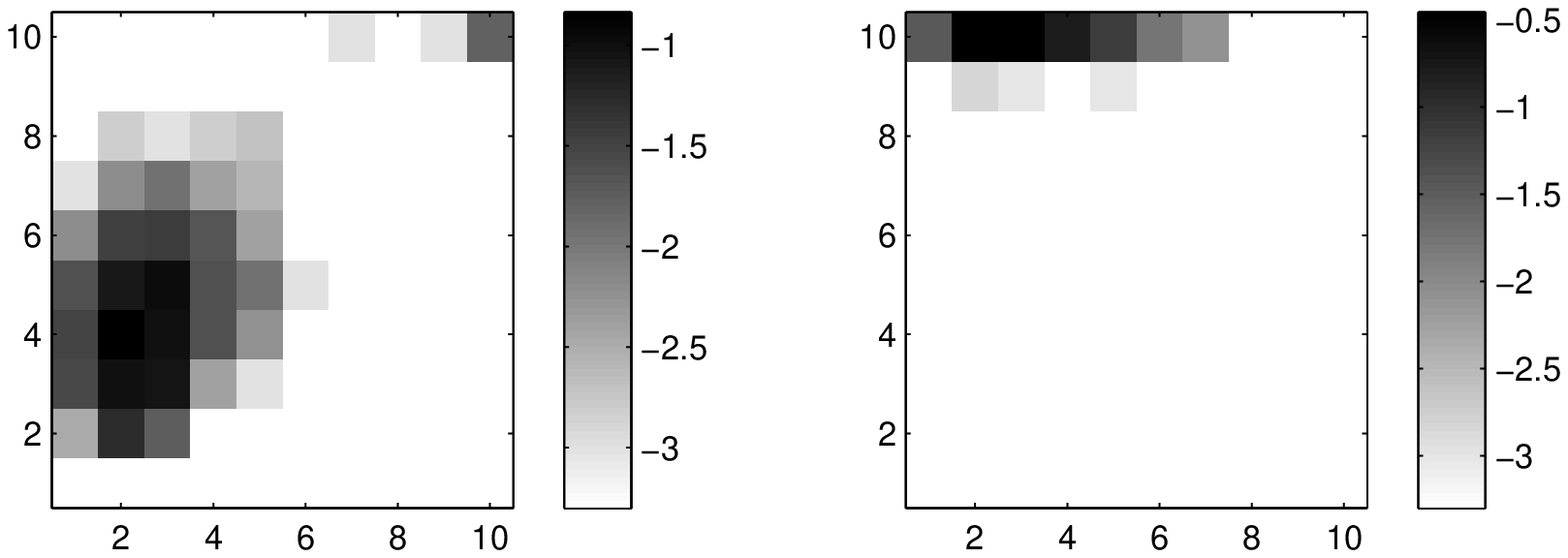}
\end{minipage} \hfill \\
\hspace{-1cm} $\middim$ \hspace{3cm} Experiment X1--005$\mu$02 \hspace{2cm} $\mo$\vspace{0.2cm} \\
\end{center}
\caption{Same as \Figpathmain\ for the three other experiments. 
Left: $\log_{10} \Prob\paren{ m = \middim}$ represented in $\R^2$ using $(D_{m,1},D_{m,2})$ as coordinates, where $\middim$ is defined by \refeqmiddim; $N=10\,000$ samples have been simulated for estimating the probabilities.
Right: $\log_{10} \Prob\paren{ m = \mo}$ using the same representation and the same samples. 
The distributions of \mo\ and \middim\ are almost disjoint.
\label{fig.path-density}
}
\end{figure}

\end{document}